\documentclass[a4paper]{article}

\usepackage{fullpage}
\usepackage{comment}

\usepackage{xcolor}

\usepackage{amsthm,amsmath,amssymb,mathtools}
\usepackage[hidelinks]{hyperref}
\usepackage[nameinlink]{cleveref}

\usepackage{enumitem}

\newtheorem{theorem}{Theorem}[section]
\newtheorem{lemma}[theorem]{Lemma}
\newtheorem{corollary}[theorem]{Corollary}

\theoremstyle{definition}
\newtheorem{remark}[theorem]{Remark}

\usepackage{todonotes}

\newcommand{\Rd}{{\mathbb{R}^d}}
\newcommand{\dRL}{{^{RL}\partial_t^\alpha}}

\newcommand{\asympint}[2]{\overset {(#1,#2)}\asymp }

\usepackage[backend=biber,url=false,giveninits,maxbibnames=10]{biblatex}
\AtEveryBibitem{%
  \clearfield{isbn}
  \clearfield{issn}
  \clearfield{month}
  \clearfield{series}
  \clearfield{urldate}
  \clearfield{pubstate}
  \clearfield{url}

  \ifentrytype{book}{}{%
    \clearlist{publisher}
    \clearname{editor}
  }
}

\DeclareMathOperator{\fixedpoint}{\mathfrak K}

\usepackage{authblk}

\title{Self-similar solutions to the \\
 time-fractional Porous-Medium Equation}
\author[1,2]{David Gómez-Castro}
\author[3]{\L{}ukasz P\l{}ociniczak}
\author[1]{Juan Luis Vázquez}
\affil[1]{Universidad Autónoma de Madrid}
\affil[2]{Instituto de Ciencias Matemáticas, CSIC}
\affil[3]{Wrocław University of Science and Technology}

\date{}

\begin{document}

\maketitle

\begin{abstract}
    We show the existence of self-similar solutions with constant finite mass to the time-fractional Porous-Medium Equation for all spatial dimensions $d \ge 1$ and all exponents $m>m_c=(d-2)_+/d$. This range is optimal. We find two types of solution depending on the exponent: compactly supported solutions in the slow-diffusion range $m > 1$ and positive solutions with heavy tails in the sub-critical fast-diffusion range $m_c < m < 1$. The self-similar solutions in the linear case $m=1$ were already known explicitly obtained by the Fourier transform, and we discuss their properties in our settings and the limit $m \to 1$.

    \noindent \textbf{Keywords.} Porous medium equation; fast diffusion equation;  Caputo fractional-time derivative; self-similar solutions.
\end{abstract}

\hfill {\sl Dedicated to the memory of G. I. Barenblatt}

\section{Introduction and main results}
In this paper, we discuss the nonnegative self-similar solutions of the nonlinear time-fractional diffusion equation for $m > 0$ and $\alpha \in (0,1)$ given by
\begin{equation}
  \label{eq:main}
  \partial_t^\alpha u = \Delta (u^{m})
\end{equation}
for $t > 0$ and $x \in \mathbb R^d$ for any $d \ge 1$.
Here $\partial_t^\alpha$ is the Caputo derivative
\begin{equation}
\label{eq:Caputo}
  \partial_t^\alpha u(t,x) = \int_0^t \frac{(t-s)^{-\alpha}}{\Gamma(1-\alpha)} \partial_t u(s,x) ds.
\end{equation}
The solutions start from a Dirac delta $u(0,\cdot) = M \delta_0$ in the sense of initial traces:
\begin{equation}
  \label{eq:initial Dirac}
  \operatorname*{ess\,lim}_{t \to 0^+} \int_{\Rd} u(t,x) \varphi(x) dx = M\varphi(0),
\end{equation}
for all $\varphi \in C^\infty_c (\Rd)$. We look for nonnegative self-similar solutions $u(t,x)$ of the equation that have a finite mass for all $t>0$, and, moreover, they conserve for all times the initial mass $M>0$ in the sense that
\begin{equation}
  \label{eq:mass conservation}
  \int_{\mathbb R^d} u(t,x) dx = M \qquad \text{for all } t > 0.
\end{equation}
By self-similar solution, we refer to space-time function, which can be re-scaled to a profile $U$ in the sense that
\begin{equation}\label{eq:ssf}
  u(t,x) = t^{-a} U(t^{-b} |x|),
\end{equation}
for similarity exponents $a,b$ to be appropriately chosen.
Our main contribution is the study of the existence and key properties of $U$ and its dependence on parameters $\alpha,m,d,$ and mass $M$.

\medskip

The classical case $\alpha = 1$ was the source of a significant body of work in the twentieth century. The linear diffusion case $m = 1$ is the most famous, where $u$ is the heat kernel and $u$ is the Gaussian profile. The slow-diffusion $m > 1$ solutions lead to the compactly-supported Barenblatt solutions \cite{MR1426127} (see also \cite{MR46217,zel1950towards}), which started the study of the theory of free boundaries in non-linear diffusion. 
These naturally arise in many physical contexts, such as gas flow through heterogeneous media, water percolation in the soil \cite{bear2013dynamics}, or even in biology \cite{vafai2010porous} and medicine \cite{lavrova2021barenblatt}. The nonlinear character of the diffusivity helps to model such physically important features as finite speed of propagation and sharp interfaces. The former is a central characteristic of fluid flow through porous media where disturbances do not propagate with an infinite speed (as modeled by the classical heat equation). Moreover, Barenblatt profiles frequently act as attractors to which solutions starting from general initial conditions converge asymptotically \cite{vazquez2014barenblatt}. \normalcolor In the fast-diffusion case $0 < m < 1$ we have self-similar power-type solutions. In this setting, one must distinguish the mildly-singular fast-diffusion case $m_c < m < 1$, where $m_c = (1 - 2/d)_+$, in which finite-mass self-similar solutions exist. In the very-singular fast-diffusion $0 < m < m_c$ the self-similar solutions have infinite mass. In the very-singular case, there is still a theory of backward-in-time self-similarity, which we will not discuss here. 
We also discuss the mesa problem, i.e. the limit as $m\to \infty$, which is well-understood in the classical case $\alpha = 1$ \cite{MR983523,MR883709}.
For details on the Porous-Medium equation, we refer the reader to \cite{Vazquez2006,Vazquez2006a}.
For a general discussion on self-similarity and mass conservation see \cite{vazquez2025surveymassconservationselfsimilarity}.

\medskip

The theory of existence and uniqueness and key properties for the fractional case $\alpha \in (0,1)$ is an active field of research.
The case of linear diffusion, that is, for $m = 1$, the existence was proven in \cite{Zacher2009}, and other properties in \cite{vergara2015,dier2020,cortazarHeatEquationMemory2021}. 
For $m > 1$ the closest literature that deals with the existence and properties is
\cite{djidaNonlocalTimeporousMedium2019,DjidaNietoArea2019}
for the nonlocal pressure case where $\Delta u^m$ is replaced by $\operatorname{div}(m u^{m-1} \nabla p)$ and $(-\Delta)^s p = u$. Our problem represents the $s = 0$ case. 

\medskip

The theory of self-similar solutions for the fractional-in-time problem with linear diffusion $m = 1$ is known \cite{MR1061448, MR2047909,MR3631303} where the Fourier transform of the self-similar solutions is of closed form given by Mittag-Leffler functions (see \eqref{eq:m=1 self-similar}). 
The self-similar solutions for $m > 1$ and $d=1$ were studied in \cite{CaballeroOkrasinska-PlociniczakPlociniczakSadarangani2025} by arriving at an integral equation (see \eqref{eq:profile equation} below) and applying Schauder fixed-point arguments. 
A numerical scheme for $m > 1$ and $d=1$ is presented in \cite{Plociniczak2019}.

\medskip

There are several other different directions of problems with fractional-in-time diffusion to cite a few:
with fractional pressure in $\mathbb R^d$ we have \cite{allen2016}.
The study of the case $m > 1$ in bounded domains is done in \cite{bonforteTimeFractionalPorousMedium2024}.
In addition, the abstract approach of flows in the Hilbert space was undertaken in \cite{akagi2019fractional} where both the linear and nonlinear versions of the main equation have been considered (along with other generalizations). 
Also, see the latest developments in \cite{akagi2025time}. 
Moreover, several existence and uniqueness results for a general nonlinear parabolic problem with the Caputo derivative have been proved in \cite{topp2017existence} via the viscosity solution framework. 

\paragraph{Outline of main results.} 
We provide a complete study of self-similar profiles for $d \ge 1$, $\alpha \in (0,1)$, $m > m_c = (\tfrac{d-2}d)_+$ that extends the previous results for $m = 1$ in \cite{MR1829592,dier2020}, and $m > 1$ and $d=1$ in \cite{CaballeroOkrasinska-PlociniczakPlociniczakSadarangani2025}.
Similarly to \cite{CaballeroOkrasinska-PlociniczakPlociniczakSadarangani2025} we arrive at an integral equation \eqref{eq:profile equation}, where we have significantly simplified the expression of the kernel in \eqref{eq:Q} even in dimension $d=1$, and we solve it using the method of sub and super-solutions in all cases.
We can summarize the results presented later in this section as follows:
\begin{itemize}
  \item \textbf{Existence} of profiles for all $m>m_c$.

    \item \textbf{Uniqueness} of profiles for $m\ge 1$.
    
  \item \textbf{Notion of weak solution} satisfied by the self-similar solution.
  
  \item \textbf{Sharp behaviour at $z=0$.}
  For all $m > m_c$ we see that $U$ is bounded in $d=1$ and $U^m$ behaves like the Newtonian potential for $d \ge 2$. The critical exponent $m_c$ is optimal. 
  
    \item \textbf{Compact support for $m > 1$} where solutions behave like a simple power near the free bundary. The sharp constant is provided.

    \item \textbf{Tail behavior for $m \in (m_c,1)$} 
    The solutions have power-like tails when $m_c < m < 1$, which can easily be computed by scaling, and they form an asymptotic fan around a very-singular solution of infinite mass.
    The exponential tail of the linear case $m=1$ is quite complicated. On the contrary, for $m< 1$ the tail decays like a simple power.
    We match these cases by proving the limit $m \to 1$.

  \item \textbf{Mesa limit $m\rightarrow \infty$} where the solutions are multiples of the characteristic function $\chi_{B_1}$. Curiously, this mesa-like behaviour coincides exactly with the one found for $\alpha=1$.

    \item \textbf{Classical limit $\alpha \to 1$} where we recover the usual Barenblatt profiles.
  
  \item \textbf{The numerical scheme} for the case $m>1$ is constructed by Picard's iterations. We illustrate the behaviors with respect to $\alpha$ and $m$ in \Cref{fig:limit alpha->1,fig:mesa}. 
\end{itemize}
\normalcolor

\paragraph{Notation.} The self-similar solution has 4 parameters: $\alpha,m,d,M$. For each result and proof, the value of $d$ will not change, so we will not make this dependence explicit. When we intend to change the value of $\alpha,d,M$ through a proof statement, we will make this dependence explicit $U = U_{\alpha,m,M}$. Often we will drop the sub-indices in the interest of clarity. We will denote generic constants we will use the notation $c$ and $C$, and when their dependences can be inferred from the context we will.

\subsection{An integral equation for the self-similar solutions}

Computing the integral, we can show that the conservation of mass \eqref{eq:mass conservation} is equivalent to $a = db$ and
\begin{equation}
  \label{eq:profile mass}
  |\partial B_1| \int_0^\infty U(\rho) \rho^{d-1} d \rho  = M.
\end{equation}
We will show that the scaling factors of the equation are given by
\begin{equation}
  \label{eq:self-similar exponents}
  a = \frac{\alpha d}{2 + d(m-1)} , \qquad b = \frac{\alpha}{2+d(m-1)}.
\end{equation}
Notice that $a, b$ are positive if and only
\begin{equation}
  m>m_c \coloneqq (1-\tfrac{2}{d})_+.
\end{equation}
This restriction on $m$ will be kept in this paper.
Formally, if $u$ is classical solution of \eqref{eq:main} of the form \eqref{eq:ssf} then we will show that $U$ satisfies the integral operator equation
\begin{equation}
  \label{eq:profile equation}
  U(z)^{m} = \int_z^{+\infty} K(z,\rho) \, U(\rho) d\rho,
\end{equation}
with a non-negative kernel $ K(z,\rho)$ given for $\rho \ge z$ by
\begin{equation}
  \label{eq:Q}
  K(z,\rho)= \rho \, Q(\tfrac{z}{\rho}), \qquad Q(\eta) =
  \frac{1}{\Gamma(1-\alpha)}\int_\eta^1(1-\sigma^{\frac 1 b})^{-\alpha} \sigma^{1-d} d\sigma.
\end{equation}
Notice that $K(z,z) = 0$ and $K(z,\rho) > 0$ if $z < \rho$.
This formulation is a key tool in our paper.

\subsection{Existence of self-similar solutions for $m > m_c$}
Our first result is the existence of self-similar solutions and their characterization as weak solutions.
\begin{theorem}
  \label{thm:existence}
  Let $d \ge 1$, $\alpha \in (0,1)$, $m > m_c$, and $M > 0$. Then, there exists a canonical profile $\mathcal U_{\alpha,m} \in C((0,+\infty))$ such that $z^{d-1} \mathcal U_{\alpha,m}(z) \in L^1(0,\infty)$ and, if we let,
  \begin{equation}
    \label{eq:rescaling profile to correct mass}
    U_{\alpha,m,M}(z) = A \mathcal U_{\alpha,m}(A^{-\frac{m-1}{2}} z), \qquad  A(\alpha,m,M)^{1 + \frac{(m-1)d}{2}} = \frac{M}{|\partial B_1| \int_0^\infty \mathcal U_{\alpha,m}(\rho) \rho^{d-1} d \rho}.
  \end{equation}
  the following properties hold:
  \begin{enumerate}
    \item
      $\mathcal U_{\alpha,m}$ and $U_{\alpha,m,M}$ are solutions to \eqref{eq:profile equation} and $U_{\alpha,m,M}$ satisfies \eqref{eq:profile mass}.
    \item
      Letting $U = U_{\alpha,m,M}$, the self-similar solution $u = u_{\alpha,m,M}$ given by \eqref{eq:ssf}
      is a very weak solution of \eqref{eq:main} in the sense that
      \begin{equation}
        \label{eq:very weak solution}
        \begin{aligned}
          \int_{0}^T  \int_{\Rd} u(t,x) \dRL \varphi(T-t,x)dxdt
          & = \int_0^T \int_{\Rd} u^{m}(t,x) \Delta\varphi(T-t,x)  dx dt
          \\
          & \qquad + M I_{1-\alpha}\varphi(T,0).
        \end{aligned}
      \end{equation}
      for all $\varphi \in C^\infty_c ([0,T] \times \mathbb R^d)$. Furthermore, it satisfies \eqref{eq:initial Dirac} and \eqref{eq:profile mass}.
    \item
      \label{it:slow-diffusion U near 0}
      Behaviour near $z = 0$: We have $z^{d-1}(U^m)' \in C([0,+\infty))$ and
      \begin{equation}
        \label{eq:limit of flux at z=0}
        - \lim_{z \to 0^+} |\partial B_1| z^{d-1}\frac{dU^m}{dz}(z) = \frac {M} {\Gamma(1-\alpha)}.
      \end{equation}
      Furthermore, we have that
      \begin{equation}
        \begin{aligned}
          \label{eq:profile at z=0}
          \lim_{z \to 0^+}\frac{U_{\alpha,m,M}^m(z)}{\mathcal V_{\alpha,m}^m(z)} =
          \begin{cases}
            \displaystyle\frac{M}{|\partial B_1| (d-2)\Gamma(1-\alpha)} & \text{if } d\ge 3,
            \vspace{2pt}\\
            \displaystyle\frac{M}{|\partial B_1| \Gamma(1-\alpha)} &\text{if } d = 2,
            \vspace{2pt}\\
            \displaystyle\frac{M \Gamma(b_{\alpha,m}+1)}{2 \Gamma(b_{\alpha,m}+1-\alpha)} &\text{if } d= 1.
          \end{cases} \text{ where } \mathcal V_{\alpha,m}^m(z) \coloneqq
          \begin{dcases}
            z^{-{d-2}}     & \text{if } d \ge 3 \\
            |\log {z}| & \text{if } d=2,     \\
            1                      & \text{if } d=1.
          \end{dcases}
        \end{aligned}
      \end{equation}
  \end{enumerate}
\end{theorem}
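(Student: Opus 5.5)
The plan is to build a canonical profile for \eqref{eq:profile equation} by monotone iteration between a sub- and a super-solution. We then fix the mass by scaling, and finally read off the weak formulation and the behaviour at the origin from the integral equation itself.

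\textbf{Step 1: scaling.} First I check the invariance of \eqref{eq:profile equation}. Write $\tilde U(z)=A\,\mathcal U(\lambda z)$ and use $K(z,\rho)=\rho Q(z/\rho)$. Changing variables $\rho=\sigma/\lambda$ gives
\[
\int_z^\infty K(z,\rho)\tilde U(\rho)\,d\rho=A\lambda^{-2}\int_{\lambda z}^\infty K(\lambda z,\sigma)\,\mathcal U(\sigma)\,d\sigma=A\lambda^{-2}\,\mathcal U(\lambda z)^m .
\]
This equals $\tilde U(z)^m=A^m\mathcal U(\lambda z)^m$ exactly when $\lambda=A^{-(m-1)/2}$. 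The mass scales like $A\lambda^{-d}=A^{1+(m-1)d/2}$, and this exponent is positive because $m>m_c$. So \eqref{eq:rescaling profile to correct mass} fixes $M$, and item 1 reduces to producing one nontrivial solution $\mathcal U_{\alpha,m}$ with $z^{d-1}\mathcal U\in L^1$.

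\textbf{Step 2: existence.} I rewrite the problem as a fixed point $U=\fixedpoint[U]:=\big(\int_z^\infty K(z,\rho)U(\rho)\,d\rho\big)^{1/m}$. This map is order preserving because $K\ge0$.

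For $m>1$ I look for ordered barriers. The super-solution is Barenblatt-like, $\overline U=C(R^2-z^2)_+^{1/(m-1)}$ plus a Newtonian-type singular part $z^{-(d-2)/m}$ near $0$ when $d\ge3$. The sub-solution is a small multiple of a compactly supported bump. The integral near $z=0$ behaves like $\int_z \rho\cdot\rho^{d-1}z^{2-d}\cdots$, which explains the potential-type singularity. I choose the constants so that $\underline U\le \fixedpoint[\underline U]$, $\fixedpoint[\overline U]\le\overline U$ and $\underline U\le\overline U$.

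The two-parameter scaling of Step 1 is what makes this possible: $\fixedpoint$ maps $A\,V(\lambda\cdot)$ to $(A\lambda^{-2})^{1/m}\fixedpoint[V](\lambda\cdot)$, so compressing or dilating the barriers moves them in the required direction. Monotone iteration from $\underline U$ then increases to a solution $\mathcal U$ trapped between the barriers. Continuity follows from dominated convergence in the kernel.

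For $m_c<m<1$ I use the same scheme with power-tail barriers $\sim c(1+z^2)^{-1/(1-m)}$. The tail rate $z^{-2/(1-m)}$ is integrable against $z^{d-1}$ precisely because $m>m_c$.

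\textbf{Step 3: the main obstacle.} I expect the hard part to be verifying the barrier inequalities. This needs two-sided estimates on $Q$: $Q(\eta)\sim c\,\eta^{2-d}$ as $\eta\to0$ for $d\ge3$ (and $|\log\eta|$ for $d=2$), and $Q(\eta)\sim c(1-\eta)^{1-\alpha}$ as $\eta\to1$. The barriers also have to be matched across these regimes while keeping them nontrivial. I would first try to separate variables, testing $\fixedpoint$ on pure powers $\rho^{-\gamma}$ and on $(R-\rho)_+^{\beta}$ to get explicit constants, and then glue the pieces together.

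\textbf{Step 4: behaviour near $0$ and weak formulation.} Differentiating \eqref{eq:profile equation} gives
\[
\frac{dU^m}{dz}=\int_z^\infty Q'(z/\rho)\,U(\rho)\,d\rho,\qquad Q'(\eta)=-\frac{(1-\eta^{1/b})^{-\alpha}\eta^{1-d}}{\Gamma(1-\alpha)} .
\]
Hence
\[
-z^{d-1}(U^m)'=\frac{1}{\Gamma(1-\alpha)}\int_z^\infty \big(1-(z/\rho)^{1/b}\big)^{-\alpha}\rho^{d-1}U(\rho)\,d\rho .
\]
By monotone convergence this tends to $\frac{M}{|\partial B_1|\Gamma(1-\alpha)}$ as $z\to0$, which is \eqref{eq:limit of flux at z=0}. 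Integrating in $z$ then gives \eqref{eq:profile at z=0} for $d\ge2$. For $d=1$ I evaluate $U^m(0)=\int_0^\infty \rho\,Q(0)U\,d\rho$ exactly instead; this is where the Gamma-function constant comes from.

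For the very weak formulation I reverse the derivation of \eqref{eq:profile equation}. Radially, \eqref{eq:profile equation} is equivalent to the flux identity $-|\partial B_1|z^{d-1}(U^m)'=\partial^\alpha$-type mass terms. Writing $u$ in self-similar form, I multiply by $\varphi$, integrate by parts in $x$ using the flux limit at $0$, and integrate by parts in $t$ using the Riemann–Liouville adjoint. The boundary term $MI_{1-\alpha}\varphi(T,0)$ appears from the Dirac trace. The trace \eqref{eq:initial Dirac} itself follows from mass conservation and $b>0$, since the profile concentrates at the origin as $t\to0^+$.
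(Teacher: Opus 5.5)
Your Step 1 and the $m>1$ part of Step 2 follow essentially the paper's route: multiples of a reference shape with the Newtonian singularity at $0$, using $\fixedpoint(cV)=c^{1/m}\fixedpoint V$. One caveat there. For $V=(R-z)_+^{\beta}$ one has $(\fixedpoint V)^m\asymp (R-z)^{2-\alpha+\beta}$ near the edge. So your sub-solution bump must vanish at least like $(R-z)^{\frac{2-\alpha}{m-1}}$, and a bump with the Barenblatt rate $\beta=\frac1{m-1}$ fails there. The paper uses exactly $\beta=\frac{2-\alpha}{m-1}$ for both barriers.

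\textbf{The range $m_c<m<1$ is a genuine gap.} Here ``the same scheme'' cannot work. Now $1/m>1$, and your own scaling identity shows the following, with $\gamma=\frac{2}{1-m}$. The function $AV(\lambda\cdot)$ is a super-solution if and only if $(A\lambda^{-\gamma})^{\frac{1-m}{m}}\le \inf V/\fixedpoint V$. It is a sub-solution if and only if $(A\lambda^{-\gamma})^{\frac{1-m}{m}}\ge \sup V/\fixedpoint V$. But $A\lambda^{-\gamma}$ is precisely the tail amplitude of $AV(\lambda z)$ when $V\sim z^{-\gamma}$. So the sub-solution always has at least the tail of the super-solution, and the two cannot be ordered unless $\fixedpoint V/V$ is constant, i.e.\ unless $V$ is already an exact solution.

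Intrinsically, $(\fixedpoint[c z^{-\gamma}])^m=c\,\mathsf Q(\gamma)z^{2-\gamma}$. This forces $c\le c^*$ for any super-solution with this tail and $c\ge c^*$ for any sub-solution. Hence both barriers must carry exactly the tail of the very singular solution $U^*$ and can only be separated at the next order. For $c^*(1+z^2)^{-\frac1{1-m}}=U^*(1-\tfrac{1}{1-m}z^{-2}+\dots)$ the correction exponent is $2$. The admissible exponent is instead $\gamma^*$, defined by $\mathsf Q(\gamma+\gamma^*)=m\,\mathsf Q(\gamma)$. Since $x\mapsto \Gamma(x)/\Gamma(x+1-\alpha)$ is strictly decreasing, $\gamma^*<2$ for every $\alpha<1$. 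So with the forced amplitude $c^*$ the Barenblatt shape is never a super-solution near infinity. The missing idea is to take the exact solution $U^*$ as super-solution and $U^*(1-z^{-\gamma^*})_+$ as sub-solution, the latter via concavity of $s\mapsto s^m$.

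Even with the right barriers, two further steps are missing for $m<1$.
\begin{itemize}
\item The monotone limit must be shown to differ from $U^*$. The paper uses the tail super-solution $U^*(1-z^{-\gamma^*}+z^{-\frac32\gamma^*})$ for $z\ge z_0$.
\item Finite mass is not automatic. $U^*$ is not integrable at the origin against $z^{d-1}$, because $\gamma>d$. For $d\ge2$ no bounded super-solution exists, since $K(0^+,\rho)=+\infty$. Your remark that $z^{-\frac{2}{1-m}}$ is integrable only handles infinity. The paper rescales $U_R(z)=R^{-\gamma}\mathcal U(z/R)$ and proves $U_R\searrow 0$ as $R\to\infty$. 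It then uses the differential inequality $W'\le AW^{1/m}+B_R$ for $d=1$, and a near-origin super-solution $c z^{-\gamma'}$ with $\gamma'<d$, glued to $U_R$, for $d\ge2$.
\end{itemize}
Separately, Step 2 omits $m=1$ altogether. There $\fixedpoint$ is linear, so constant multiples cannot be tuned. The paper instead takes $\mathcal U_{\alpha,1}(z)=Z(1,ze_1)$ from the Mittag-Leffler fundamental solution.

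\textbf{Two smaller points in Step 4.}
\begin{itemize}
\item The convergence of $-z^{d-1}(U^m)'$ is not monotone. The factor $(1-(z/\rho)^{1/b})^{-\alpha}\ge 1$ decreases as $z\downarrow 0$ while the domain grows. You still need $\int_z^\infty[(1-(z/\rho)^{1/b})^{-\alpha}-1]\rho^{d-1}U\,d\rho\to0$. The paper gets this by dominated convergence after setting $\eta=z/\rho$, using $U(\rho)\rho^d\le d\int_0^\rho U s^{d-1}ds\to 0$.
\item For $d=1$, your formula $U^m(0^+)=Q(0)\int_0^\infty \rho\, U(\rho)\,d\rho$ with $Q(0)=\frac{\Gamma(b+1)}{\Gamma(b+1-\alpha)}$ is correct. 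But it is a first moment, not $M/2$, so it does not give the stated constant, and you should not claim that it does. In fact the stated $d=1$ constant cannot hold. Under $U\mapsto AU(A^{-\frac{m-1}{2}}\cdot)$, $U^m(0)$ scales like $A^m$ while $M$ scales like $A^{\frac{m+1}{2}}$. For $m=1$ the explicit Wright-function profile gives $U(0)=\frac{M}{2\Gamma(1-\alpha/2)}$, which matches your formula, not the stated one. The paper's $d=1$ computation drops the factor $\rho$ in $K(z,\rho)=\rho Q(z/\rho)$.
\end{itemize}
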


We divide the proof into cases $m = 1$ in \Cref{sec:existence proof m=1}, $m > 1$ in \Cref{sec:existence proof m>1}, and $m \in (m_c,1)$ in \Cref{sec:existence proof m < 1}.

\begin{remark}
  Notice that due to \eqref{eq:profile at z=0} we see that for $d \ge 2$ we have $\mathcal V_{\alpha,m}^m$ is the Newtonian potential up to a constant, and hence so is $u^m$.
  Therefore, $\Delta u^m(t,x) = \partial_t^\alpha u(t,x)$ is a multiple of $\delta_0$.
  This means that $\partial_t^\alpha$ always remembers the initial Dirac. 
  This was known for $m = 1$.
  In bounded domains and $0 < \alpha < 1$ it was shown in \cite{bonforteTimeFractionalPorousMedium2024} that if $u_0 \in L^p \cap H^*$ for $p \ge \max\{ \frac{d}{2s}, 1 + m\}$, then $u^m \in L^{\infty}$. The critical $p = \frac{d}{2s}$ is precisely the value such that Dirichlet Laplacian in bounded domains $-\Delta_\Omega$ satisfies $(-\Delta_\Omega)^{-1} : L^p \to L^\infty$. The authors had already pointed out that the memory effect makes
  $u^m$ behave like $(-\Delta_\Omega)^{-1} u_0$. This is \emph{elliptic regularization at work}.
  Lastly, we recall that the classical Barenblatt for $\alpha = 1$ is bounded and smooth at $z = 0$.
\end{remark}

\begin{remark}
  The theory of the existence of the profile in the linear case $m = 1$ is already well understood, and we only revise it in \Cref{sec:m=1}.
  We believe that \eqref{eq:very weak solution} and \eqref{eq:limit of flux at z=0} are also new for $m=1$.
\end{remark}

\subsection{The linear-diffusion case $m = 1$}
In \Cref{sec:m=1} we recall the deduction the closed formula
\begin{equation}
  \label{eq:m=1 self-similar}
  u_{\alpha,1,M}(t,x) = M \mathcal F^{-1}[ \widehat Z(t,\cdot) ] (x), \qquad \widehat Z(t,\xi) = E_\alpha(-|\xi|^2t^\alpha),
\end{equation}
where $E_\alpha$ is the Mittag-Leffler function. In dimension $d=1$, $Z$ can be written in terms of the Wright function, see \cite{MR1829592}.
Due to the explicit construction, it was proved in \cite{dier2020} for $\alpha \in (0,1)$
that for $z \ge 1 $ there exists constants such that
\begin{equation*}
  c \mathcal V_{\alpha,1} (z)
  \le
  {U_{\alpha,1,M}(z)}
  \le C\mathcal V_{\alpha,1}(z),
  \qquad
  \mathcal V_{\alpha,1}(z) \coloneqq z^{\frac{d(\alpha-1)}{2(2-\alpha)}} e^{-\sigma_\alpha z^{\frac 1{2-\alpha}}},
  \qquad
  \sigma_\alpha \coloneqq (2-\alpha)(\alpha^\alpha/4)^{1/(2-\alpha)}.
\end{equation*}
Further asymptotic analysis can be found in \cite{deng2019}.
It is known that the fundamental solution describes the intermediate asymptotics of the solution with general initial datum $u_0$, see, e.g. \cite{cortazarHeatEquationMemory2021}. The uniqueness of weak solutions holds easily by duality.
\begin{theorem}
  \label{thm:m=1 uniqueness}
  Let $d \ge 1, \alpha \in (0,1)$, $m = 1$, and $M > 0$. Then, there exists a unique solution to \eqref{eq:very weak solution}.
  Hence, there is at most one solution to \eqref{eq:profile equation} such that \eqref{eq:profile mass}.
\end{theorem}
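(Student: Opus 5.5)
Existence is already provided by \Cref{thm:existence}, so only uniqueness needs proof, and I would obtain it by duality for the linear problem. Let $u_1,u_2$ be two very weak solutions in the sense of \eqref{eq:very weak solution} with the same $M$, and set $w=u_1-u_2$. Since $m=1$, the term $M I_{1-\alpha}\varphi(T,0)$ cancels and we get
\begin{equation*}
  \int_0^T\int_{\Rd} w(t,x)\,\big(\dRL \varphi(T-t,x) - \Delta\varphi(T-t,x)\big)\,dx\,dt = 0
\end{equation*}
for all admissible $\varphi$. It then suffices to show that for every $\psi\in C^\infty_c((0,T)\times\Rd)$ there is a test function $\varphi$ with $\dRL\varphi(T-t,\cdot)-\Delta\varphi(T-t,\cdot)=\psi(t,\cdot)$. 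This would give $\int\!\!\int w\psi=0$ for all such $\psi$, hence $w=0$ a.e. on $(0,T)\times\Rd$ for every $T$.

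The dual problem is the backward equation $\dRL\phi-\Delta\phi=f$ with $f(s,x)=\psi(T-s,x)$ and zero datum, where $\phi(s)=\varphi(s)$. I would solve it explicitly by Fourier transform in $x$. For each $\xi$ this is a scalar Riemann--Liouville equation $\dRL\hat\phi+|\xi|^2\hat\phi=\hat f$. Its solution is a Duhamel convolution of $\hat f$ with $s^{\alpha-1}E_{\alpha,\alpha}(-|\xi|^2 s^\alpha)$, which mirrors the Mittag-Leffler structure in \eqref{eq:m=1 self-similar}. Because $f$ vanishes near $s=0$, $\hat\phi$ also vanishes near $s=0$, so $\dRL$ agrees with the Caputo derivative there and no singular initial layer appears. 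The resulting $\phi$ is smooth in $s$ and $x$, and all its derivatives decay rapidly in $x$ because $\hat f$ is Schwartz in $\xi$.

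\textbf{Main obstacle.} The function $\phi$ is not compactly supported in $x$, and it is only as regular in time as the memory kernel allows, so it is not literally in $C^\infty_c$. I would close this gap by approximation. First, I would extend \eqref{eq:very weak solution} to test functions that are smooth and rapidly decaying in $x$, using a cutoff $\chi(x/R)$ and letting $R\to\infty$. This step needs the a priori bounds $u\in L^\infty(0,T;L^1(\Rd))$, which follows from \eqref{eq:mass conservation}, and $u\in L^1_{loc}$ in time. The commutator terms involving $\nabla\chi$ and $\Delta\chi$ tend to zero because $\phi$ and its derivatives are bounded and $\|u(t)\|_{L^1}$ is controlled. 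Second, I would handle the time regularity by noting that \eqref{eq:very weak solution} only involves $\dRL\varphi$ and $\Delta\varphi$, so integrable bounds on these are enough. If one only wants uniqueness among self-similar solutions, mass conservation supplies exactly the integrability required.

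For the second sentence of the theorem, I would argue as follows. Suppose $U$ solves \eqref{eq:profile equation} and satisfies \eqref{eq:profile mass}. By part 2 of \Cref{thm:existence}, whose argument for the weak formulation uses only \eqref{eq:profile equation} and the mass identity, the function $u$ built via \eqref{eq:ssf} is a very weak solution. The uniqueness just proved then determines $u$. Since $u(1,x)=U(|x|)$, the profile $U$ is determined as well.
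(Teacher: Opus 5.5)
Your proof is correct and follows the same duality argument as the paper: you test the equation for $w=u_1-u_2$ with the solution of the adjoint Riemann--Liouville problem with zero datum, approximate it by compactly supported test functions using $w\in L^1((0,T)\times\Rd)$, and then pass from uniqueness of very weak solutions to uniqueness of profiles via the weak-solution lemma. The differences are minor: the paper takes as right-hand side a mollification of $t^2\operatorname{sign}(w)$ on annuli to get $\int_0^T\int (T-t)^2|w|=0$ directly, whereas you use an arbitrary $\psi\in C^\infty_c$ plus the fundamental lemma, and you construct the dual solution explicitly through the $E_{\alpha,\alpha}$ Duhamel formula, where the paper only asserts its existence and regularity (since your $f$ vanishes near $s=0$, that solution is in fact $C^\infty$ in time up to $s=T$, so your time-regularity concern is unnecessary).
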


\subsection{The slow-diffusion regime $m > 1$}
In the slow-diffusion case, we can a complete description of the self-similar profiles. Similarly to the case $\alpha = 1$ they are compactly supported and of power-type at the free boundary.
\begin{theorem}[Slow-diffusion regime]
  \label{thm:slow-diffusion}
  Let $d \ge 1$, $\alpha \in (0,1)$, and $m > 1$.
  Then, the profiles $\mathcal U_{\alpha,m}$ and $U_{\alpha,m,M}$ in \Cref{thm:existence} have the following properties
  \begin{enumerate}
    \item
      $U_{\alpha,m,M}$ is the unique non-negative point-wise solution to \eqref{eq:profile equation} such that \eqref{eq:profile mass}.
    \item The solutions are ordered according to their mass and for each $z > 0$
      \[
        U_{\alpha,m,M}(z) \to +\infty  \qquad \text{ everywhere as } M \to +\infty .
      \]
    \item
      \label{it:slow-diffusion U compactly supported}
      $\mathcal U_{\alpha,m}$ is continuous, non-increasing, and supported in $[0,1]$.
    \item
      Behavior at the free boundary:
      \begin{equation}
        \label{eq:slow sharp constant at free boundary}
        \lim_{z \to 1^-} \frac{\mathcal U_{\alpha,m}(z)}{(1-z)_+^{\frac {2-\alpha }{m-1}}} = \left( \frac{b_{\alpha,m}^\alpha \Gamma(1 + \tfrac{2-\alpha}{m-1} )}{\Gamma(3-\alpha+\tfrac{2-\alpha}{m-1})} \right)^{\frac{1}{m-1}}.
      \end{equation}
    \item Classical limit as $\alpha \to 1^-$: for $z > 0$ we have
      \begin{equation}
        \label{eq:m>1 limit alpha->1}
        \lim_{\alpha \to 1^-}\mathcal U_{\alpha,m}(z) =  \left( \frac{(m-1)b_{1,m}}{2m} (1-z^2)_+ \right)^{\frac 1 {m-1}} \eqqcolon \mathcal U_{m}(z).
      \end{equation}
    \item
      \label{thm:mesa}
      The mesa limit: for any $t > 0$ and $x \in \mathbb R^d \setminus \{0\}$ we have that
      \begin{equation}
        \lim_{m \to \infty} u_{\alpha,m,M}(t,x) = M \frac{\chi_{\overline{B_1}} (x)}{|B_1|}  \eqqcolon u_{\infty,M}(x).
      \end{equation}
  \end{enumerate}
\end{theorem}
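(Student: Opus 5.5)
We work throughout with the integral equation \eqref{eq:profile equation}, $U^m(z)=\int_z^\infty K(z,\rho)U(\rho)\,d\rho$, and first record its scaling. If $U$ is a solution, then for $A>0$ so is $A\,U(A^{-\frac{m-1}{2}}z)$. Indeed, $K(z,\rho)=\rho\,Q(z/\rho)$ is homogeneous of degree one, so the change of variables $\rho=\lambda\sigma$ with $\lambda=A^{\frac{m-1}{2}}$ multiplies the right-hand side by $A\lambda^2=A^m$. This reduces every mass statement to the canonical profile $\mathcal U_{\alpha,m}$, via \eqref{eq:rescaling profile to correct mass}.

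\textbf{Compact support, monotonicity, normalization.} Write the equation as $U^m=\mathcal T U$. Since $\partial_z K(z,\rho)=-Q'(z/\rho)\cdot\ldots\ge 0$ fails in sign but $Q$ is decreasing, $z\mapsto \mathcal T U(z)$ is non-increasing whenever $U\ge0$. Hence every solution is non-increasing and continuous.

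I expect the canonical profile to come from the monotone iteration of \Cref{sec:existence proof m>1}. There one starts from a supersolution supported in $[0,R]$; the iteration $U_{k+1}=(\mathcal T U_k)^{1/m}$ stays below it, so the support stays in $[0,R]$. One then defines $\mathcal U_{\alpha,m}$ as the solution with the edge of the support rescaled to $1$; this is legitimate because the scaling moves the support edge freely. The one thing to check is that the support is nontrivial and bounded: $\sup\operatorname{supp} U<\infty$ is inherited from the supersolution, and $>0$ follows from the mass.

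\textbf{Free boundary asymptotics.} Near $z=1$, set $U(z)\approx c(1-z)^\gamma$ and expand $Q(\eta)$ near $\eta=1$. Since $1-\sigma^{1/b}\approx \frac{1-\sigma}{b}$, we get $Q(\eta)\approx \frac{b^\alpha (1-\eta)^{1-\alpha}}{\Gamma(2-\alpha)}$. Substituting gives
\[
c^m(1-z)^{m\gamma}\approx \frac{b^\alpha c}{\Gamma(2-\alpha)}\int_z^1 (\rho-z)^{1-\alpha}(1-\rho)^\gamma\,d\rho ,
\]
which is a Beta integral, so $\gamma m=\gamma+2-\alpha$, giving $\gamma=\frac{2-\alpha}{m-1}$. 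The constant matches \eqref{eq:slow sharp constant at free boundary} after using $\Gamma(2-\alpha)B(2-\alpha,\gamma+1)$. To make this rigorous, I would bound $U$ from above and below by $(c\pm\varepsilon)(1-z)^\gamma$ on $[1-\delta,1]$, using comparison for the Volterra-type equation localized at the edge. Since $\mathcal T$ only looks to the right, this is an ODE-like forward comparison from $z=1$ backwards.

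\textbf{Uniqueness and ordering.} Uniqueness is the main obstacle. The plan is as follows. Take two solutions with the same mass and rescale both to have support edge $1$. For a canonical solution with support $[0,1]$, integrating backward from $z=1$ defines the solution uniquely: the equation is a Volterra equation run from $z=1$ downward. Its nonlinearity $U\mapsto U^{1/m}$ is non-Lipschitz at $0$, but the sharp edge asymptotics (a positive constant $c$) together with a weighted contraction in the norm $\sup |U|/(1-z)^\gamma$ near $1$ should give uniqueness of the canonical profile. Then the mass fixes $A$.

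Ordering in $M$ follows because $A$ is increasing in $M$, and $A\,\mathcal U(A^{-\frac{m-1}{2}}z)$ is increasing in $A$ pointwise since $\mathcal U$ is non-increasing. This also gives divergence to $+\infty$ at each fixed $z$ as $A\to\infty$, since $\mathcal U(0^+)>0$.

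\textbf{Limits.} For $\alpha\to1^-$, uniform bounds from the sub/supersolutions allow Arzelà--Ascoli. The kernel converges: $Q\to\int_\eta^1\sigma^{1-d}\,d\sigma$ with the Gamma factor, and $\Gamma(1-\alpha)^{-1}(1-\sigma^{1/b})^{-\alpha}\to\delta_{\sigma=1}$-type behaviour. Passing to the limit yields the integrated Barenblatt ODE, whose solution supported in $[0,1]$ is $\mathcal U_m$; uniqueness identifies the limit.

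For $m\to\infty$, we have $b\to0$ and $a\to0$, and the support edge of $U_{\alpha,m,M}$ is $A^{\frac{m-1}{2}}$. Using $U^m=\mathcal T U$ with bounded right-hand side forces $U\le 1+o(1)$ in the limit and $U\to$ a constant on the support. Mass conservation together with the limiting support radius, obtained from the normalization formula, gives the constant $M/|B_1|$ and the radius $1$.
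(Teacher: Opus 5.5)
Your scaling, monotonicity, ordering in $M$ and the formal computation of the edge constant are correct. The uniqueness argument (item 1), and with it the rigorous part of item 4, has a genuine gap.

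\textbf{Uniqueness.} Your weighted contraction near $z=1$ does close, with factor about $1/m$, but only if both solutions are already known to satisfy $U_i(z)\sim c(1-z)^{\gamma}$ with the sharp constant $c^{m-1}=b^\alpha\Gamma(1+\gamma)/\Gamma(3-\alpha+\gamma)$. Mere comparability $U_i\asymp(1-z)^\gamma$ gives a factor that can exceed one.

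Proving this asymptotics for an \emph{arbitrary} solution is exactly what a ``forward comparison from $z=1$'' cannot do. At $z=1$ all candidates vanish and $s\mapsto s^{1/m}$ is not Lipschitz at $0$. Moreover $U\equiv 0$, as well as rescaled profiles with support edge $<1$, solve the same right-to-left Volterra problem. So first-crossing arguments fail (crossings may accumulate at $z=1$), and no lower bound can be extracted from the edge alone.

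You also assume, without proof, that every finite-mass solution has bounded support, so that it can be ``rescaled to edge $1$''.

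The missing idea is the homogeneity of the operator: $\mathfrak K(\lambda U)=\lambda^{1/m}\mathfrak K U$ with $m>1$. If $\lambda U_1\le U_2$ on $[z_0,\infty)$ for \emph{some} $\lambda>0$, then the supremum $\lambda_*$ of such $\lambda$ satisfies $\lambda_*\ge\lambda_*^{1/m}$. Hence $\lambda_*\ge 1$ and $U_1\le U_2$.

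To produce some $\lambda>0$ without any edge information, compare $U_1$ (support $[0,R]$) with the rescaling of $U_2$ whose support is $[0,(1+\varepsilon)R]$. That rescaling is bounded below by a positive constant on $[z_0,R]$. Then let $\varepsilon\to 0$ and swap the roles.

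The same device gives the remaining pieces:
\begin{itemize}
\item A solution with unbounded support would lie above every rescaling $A\,\mathcal U(A^{-(m-1)/2}\cdot)$, which is impossible as $A\to\infty$.
\item Combined with the global barriers $\underline c\,\mathcal V\le\mathcal U\le\overline c\,\mathcal V$, it makes the local barriers $(c\pm\varepsilon)(1-z)^\gamma$ a proof of item 4.
\end{itemize}
Those global barriers come from $\mathfrak K\mathcal V\asymp\mathcal V$, which you never establish. Note also that iterating down from a supersolution may converge to $0$, and the mass cannot exclude this.

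\textbf{Mesa limit.} Your mesa argument is internally inconsistent. Your bound $U\le 1+o(1)$ is correct, but it contradicts the height $M/|B_1|$ you then claim whenever $M>|B_1|$.

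The normalization is $A^{1+(m-1)d/2}=M/\mathcal M_m$ with $\mathcal M_m=|\partial B_1|\int_0^1\mathcal U_{\alpha,m}\rho^{d-1}d\rho\to|B_1|$. This gives $A\to 1$ and support radius $A^{(m-1)/2}\to (M/|B_1|)^{1/d}$. So $u_{\alpha,m,M}(t,\cdot)\to\chi_{B_R}$ with $|B_R|=M$: height one, as in the classical mesa problem. Item 6 as printed holds only for $M=|B_1|$.

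The paper's own argument contains the corresponding slip: it asserts $L=A^{-(m-1)/2}\to M/|B_1|$, whereas the normalization gives $L\to (M/|B_1|)^{-1/d}$. You should prove the corrected statement rather than force your computation to match the printed one.

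\textbf{Classical limit.} As $\alpha\to1^-$, the kernel satisfies $Q_{\alpha,m}(\eta)\to b_{1,m}$ for every $\eta\in(0,1)$; this is the concentration at $\sigma=1$ you mention. It does not tend to $\int_\eta^1\sigma^{1-d}\,d\sigma$. Only $K\to b\rho$ yields $-(U^m)'=bzU$ and hence the Barenblatt profile.
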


The proof of this result can be found in \Cref{sec:existence proof m>1}.
Notice that $\mathcal U_{m}$ is the famous Barenblatt solution; see, e.g. \cite[(17.28)-(17.30)]{Vazquez2006}.

\begin{remark}
  Notice that the mesa limit does not depend on $\alpha$. Moreover, it has lost the singularity at $x=0$ and does not change with time.
\end{remark}

\begin{figure}[h!]
  \centering
  \includegraphics[width=0.45\textwidth]{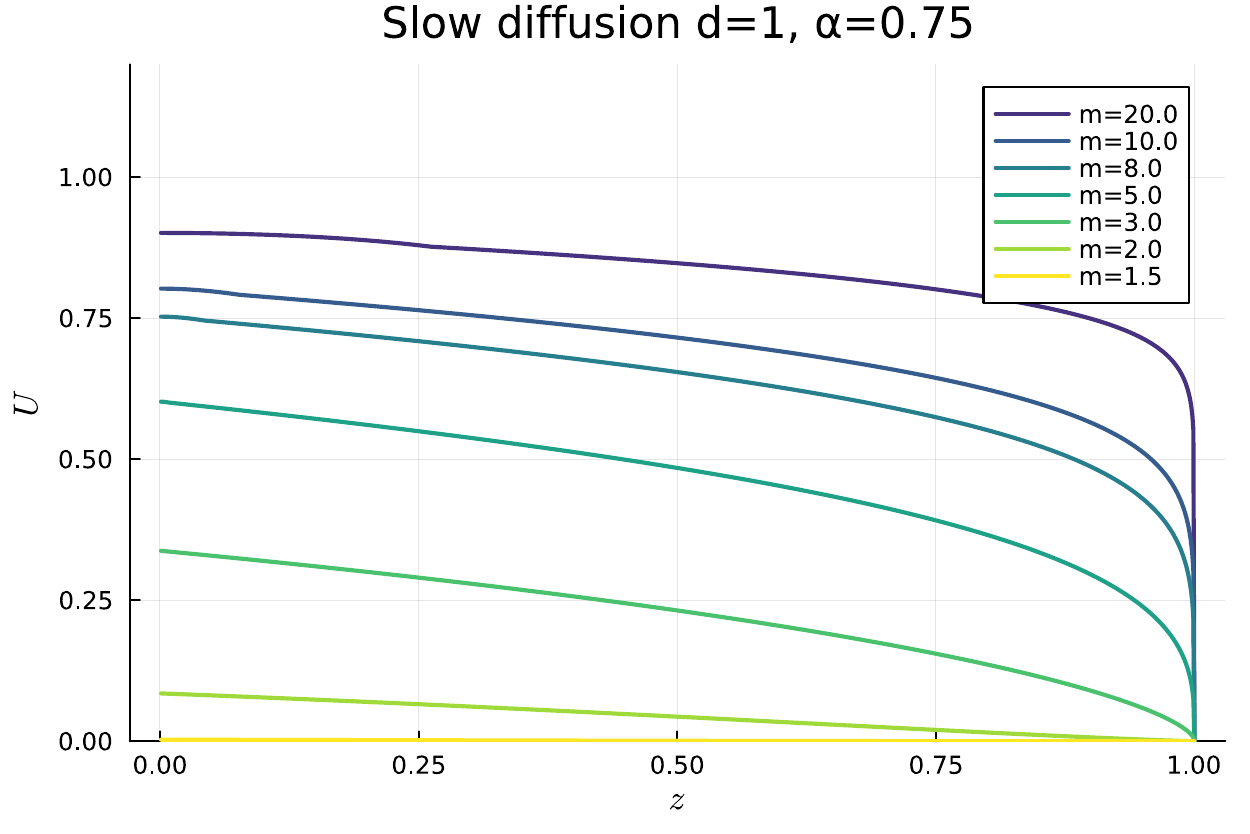}
  \includegraphics[width=0.45\textwidth]{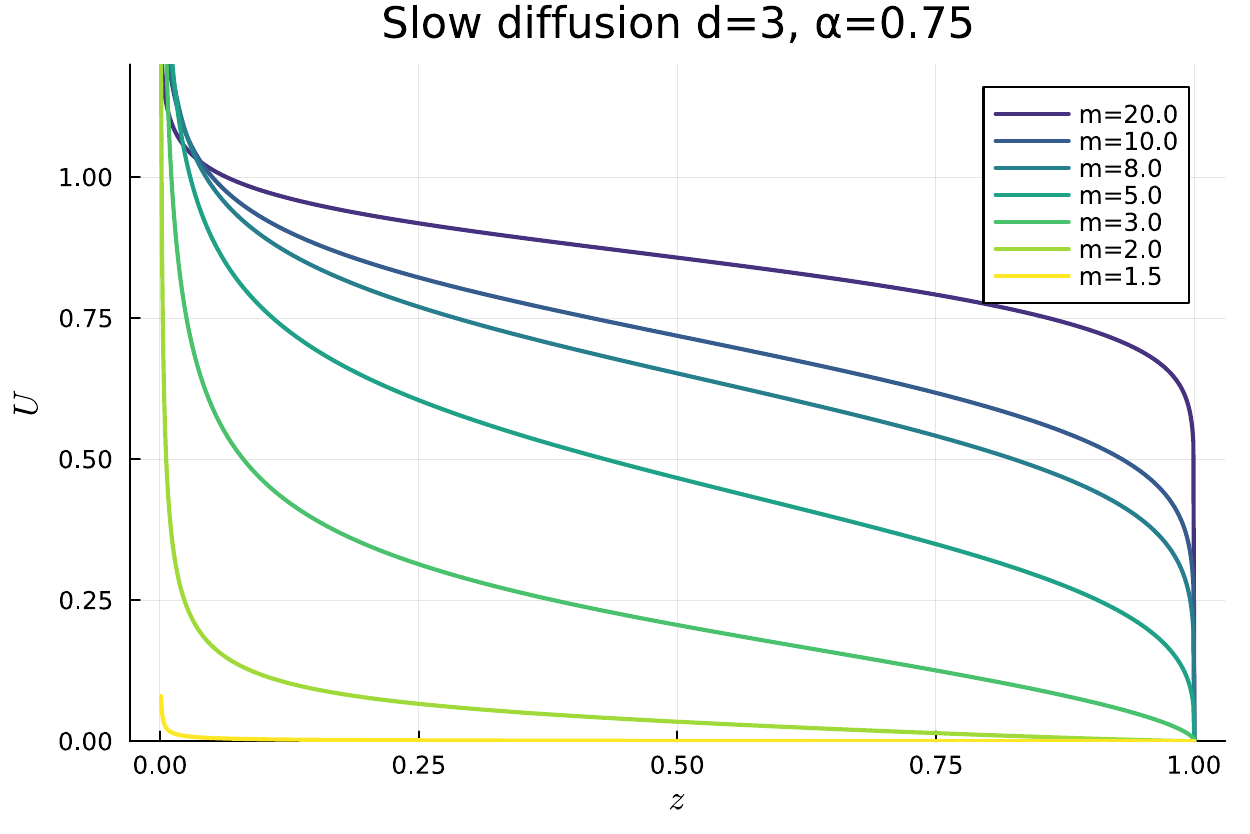}
  \caption{Slow-diffusion profiles for different values of $m$}
  \label{fig:mesa}
\end{figure}

\begin{figure}[h!]
  \centering
  \includegraphics[width=0.45\textwidth]{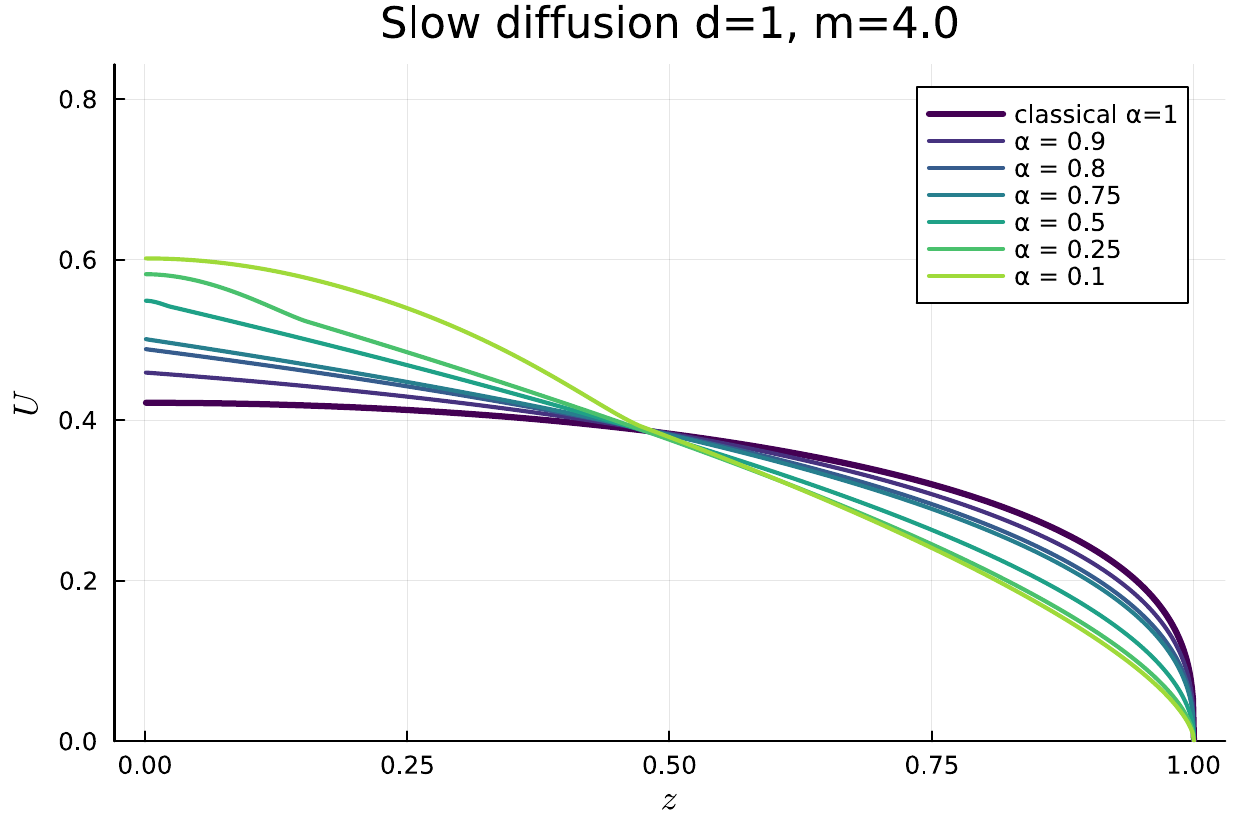}
  \includegraphics[width=0.45\textwidth]{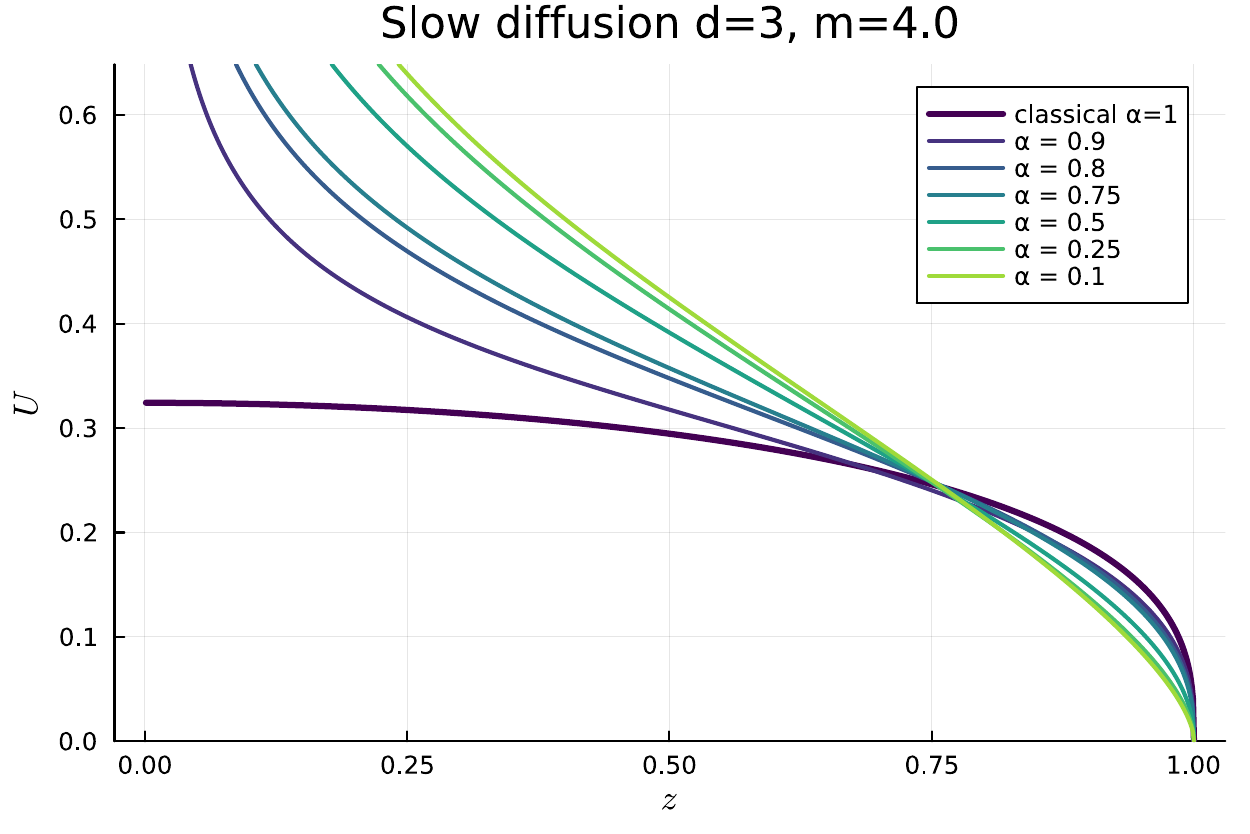}

  \caption{Slow-diffusion profiles for different values of $\alpha$.}
  \label{fig:limit alpha->1}
\end{figure}

\subsection{The mildly-fast-diffusion regime $m \in (m_c, 1)$}
We study the mildly-fast-diffusion regime $m_c<m<1$, where the critical value $m_c$ is the same as in the classical case $\alpha = 1$.
We recall that for $\alpha=1$ the self-similar profiles are $U_{1,m}(z) = c_{1,m}^* (z^2 + c)^{-\frac 1 {1-m}}$ where $c_{1,m}^*$ depends only on $m$ and $d$, and $c>0$ is a free constant.
These solutions have finite mass.
As $c \to 0^+$ we recover the so-called \emph{very-singular solution} (VSS), which has infinite mass. We will show that for $\alpha \in (0,1)$ there is the \emph{time-fractional very singular solution} given by
\begin{equation}\label{eq:vss}
  U_{\alpha,m}^*(z) = c_{\alpha,m}^* z^{-\frac 2 {1-m}}, \qquad  c_{\alpha,m}^* = \left( \frac{(1-m)b_{\alpha,m}}{2m} \frac{\Gamma(\tfrac{\alpha}{1-m})}{\Gamma(1-\alpha + \frac{\alpha}{1-m})}  \right)^{\frac{-1}{1-m}} > 0.
\end{equation}
This construction is justified in \Cref{lem:vss}. Notice that this function gives an exact self-similar solution in separated variables
\begin{equation*}
  u_{\alpha,m}^*(t,x) \coloneqq t^{-bd} U_{\alpha,m}^*(t^{-b} x) = c^*_{\alpha,m} \left(\frac{t^{{\alpha}}}{|x|^{2}}\right)^{\frac 1 {1-m}}.
\end{equation*}
This solution starts from a singularity point at $t=0$ and increases in time everywhere for all $x\ne 0$ for all $t>0$. 
The convention \emph{very-singular solution} is recent, and some previous literature has referred to this solution as \emph{infinite point-source solutions} (cf. \cite{Vazquez2006}).
We prove the existence of finite-mass self-similar solutions that form an \emph{asymptotic fan} around this very singular solution.

\begin{theorem}[Fast-diffusion regime, mildly-singular range]
  \label{thm:fast-diffusion}
  Assume that $d \ge 1$, $\alpha \in (0,1)$, $m \in (m_c, 1)$. Then the self-similar profiles from \Cref{thm:existence} are such that
  \begin{enumerate}
    \item $\mathcal U_{\alpha,m}$ and $U_{\alpha,m,M}$ are decreasing and strictly positive for $z \in (0,+\infty)$.

    \item The tail behavior is given by the asymptotic expansion
      \begin{equation}
        \label{eq:fast-diffusion U tail}
        \begin{aligned}
          \mathcal U_{\alpha,m}(z) & = U^*(z) ( 1 - z^{-\gamma^*} + \mathcal R_{\alpha,m}(z)),
          \\
          U_{\alpha,m,M}(z)        & = U^*(z) ( 1 - (Lz)^{-\gamma^*} + \mathcal R_{\alpha,m}(Lz))
        \end{aligned}
      \end{equation}
      where $L = A^{-\frac{m-1}{2}}$ is given by \eqref{eq:rescaling profile to correct mass}, $\mathcal R_{\alpha,m}(z) = o(z^{-\gamma^*})$, and the exact value of $\gamma^*$ is implicitly given by \eqref{eq:gamma^star}.
    \item The solutions are ordered with respect to $M$, and furthermore
      \begin{align*}
        & U_{\alpha,m,M}(z) \nearrow U_{\alpha,m}^*(z) \text{ as } M \to \infty
        \qquad \text{and} \qquad
        U_{\alpha,m,M}(z) \searrow 0 \text{ as } M \to 0.
      \end{align*}
    \item Classical limit as $\alpha \to 1^-$: We have that
      \begin{equation}
        \label{eq:m<1 alpha->1}
        \lim_{\alpha \to 1^-}\mathcal U_{\alpha,m} (z) = c^*_{1,m} (1+z^2)_+ ^{\frac 1 {m-1}} \eqqcolon \mathcal U_{m}(z).
      \end{equation}
  \end{enumerate}
\end{theorem}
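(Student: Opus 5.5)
Everything rests on the profile equation \eqref{eq:profile equation} with the positive kernel $K(z,\rho)=\rho\,Q(z/\rho)$. The first step is the homogeneity of the equation, computed directly from \eqref{eq:Q}. If $U$ solves it, then $U_\lambda(z)=\lambda^{2/(1-m)}U(\lambda z)$ solves it as well. This is the same scaling as \eqref{eq:rescaling profile to correct mass}, and the very singular solution $U^*$ of \eqref{eq:vss} is its fixed point. In that scaling, increasing $M$ corresponds to $L=A^{-(m-1)/2}\to 0$ (note $m<1$). Because of this, items 2 and 3 for $U_{\alpha,m,M}$ follow from the corresponding statements for the canonical profile $\mathcal U_{\alpha,m}$. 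From \Cref{lem:vss} we take that $U^*$ solves \eqref{eq:profile equation}; finiteness of $c^*_{\alpha,m}$ requires $\tfrac{2}{1-m}>d$... no — here $m>m_c$ gives $\tfrac{2}{1-m}>d$, so $U^*$ has infinite mass and a finite constant.

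Item 1 (positivity and monotonicity). Strict positivity comes straight from the equation. If $U(z_0)=0$, then $K(z_0,\rho)>0$ for $\rho>z_0$ forces $U\equiv 0$ on $(z_0,\infty)$, which together with the construction of $\mathcal U$ as a fixed point squeezed between positive sub- and supersolutions in \Cref{sec:existence proof m < 1} gives a contradiction. For monotonicity, substitute $\rho=z s$ to write $U^m(z)=z^2\int_1^\infty s\,Q(1/s)\,U(zs)\,ds$. I expect the sub/supersolution iteration to preserve the class of non-increasing functions with $z^{2/(1-m)}U$ non-decreasing. Decrease of $U$ then follows by differentiating: $(U^m)'(z)=\int_z^\infty \partial_zK(z,\rho)U(\rho)\,d\rho<0$, since $\partial_z K(z,\rho)=Q'(z/\rho)<0$.

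Item 2 (tail). Set $U=U^*(1-w)$ and linearise \eqref{eq:profile equation} around $U^*$ for large $z$. Looking for $w=z^{-\gamma}$ and using homogeneity produces a characteristic equation of the form
\begin{equation*}
m\,(c^*)^{m-1}=\frac{\int_1^\infty s^{1-\frac{2}{1-m}-\gamma}\,Q(1/s)\,ds}{\int_1^\infty s^{1-\frac{2}{1-m}}\,Q(1/s)\,ds}\,(\ldots),
\end{equation*}
whose relevant root is $\gamma^*$, defined in \eqref{eq:gamma^star}. Monotonicity of the Mellin-type integral in $\gamma$ gives existence and uniqueness of $\gamma^*>0$. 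The coefficient of $z^{-\gamma^*}$ is normalised to $-1$ by choosing the dilation of the canonical profile, which is how $\mathcal U_{\alpha,m}$ is defined. The remainder estimate $\mathcal R=o(z^{-\gamma^*})$ then requires a comparison argument. I would use barriers $U^*(1-z^{-\gamma^*}\pm z^{-\gamma^*-\varepsilon})$ as super- and subsolutions on $(R,\infty)$ and apply a comparison principle for the integral operator. \textbf{This is the main obstacle}, because the equation is nonlocal toward infinity and only controls tails. My plan is to prove comparison on $(R,\infty)$ via the Volterra structure, which only integrates over $\rho>z$, combined with smallness of the linearised operator near $U^*$.

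Item 3 (ordering in $M$). By the scaling, $U_{\alpha,m,M}(z)=U^*(z)\,\phi(Lz)$ with $\phi=\mathcal U/U^*\in(0,1)$. If $\phi$ is increasing then monotonicity in $M$ follows at once; the limit $\phi(\infty)=1$ comes from item 2 and $\phi(0)=0$ from \eqref{eq:profile at z=0}. These give $U_{\alpha,m,M}\nearrow U^*$ as $L\to0$ and $\searrow0$ as $L\to\infty$. To show that $\phi$ increases, I would compare $\mathcal U$ with its own rescalings $\mathcal U_\lambda$ for $\lambda>1$. Equivalently, $z^{2/(1-m)}\mathcal U(z)$ is increasing, and that class is preserved by the construction.

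Item 4 (limit $\alpha\to1^-$). Bounds uniform in $\alpha$ near $1$ follow from the barriers used above together with Arzelà–Ascoli, using equicontinuity from the integral equation. As $\alpha\to1$, $Q(\eta)/\Gamma(1-\alpha)^{-1}$ behaves like a delta concentrated at $\sigma=1$, and the equation passes to the classical ODE integrated form, whose decaying solutions are $c^*_{1,m}(c+z^2)^{-1/(1-m)}$. The tail normalisation $1-z^{-\gamma^*}$, with $\gamma^*\to 2$, fixes $c=1$ and gives \eqref{eq:m<1 alpha->1}.
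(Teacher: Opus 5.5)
\textbf{The main gap is in item 2, the core of the theorem.} The comparison principle on $(R,\infty)$ that you plan to prove, between a solution and the barriers $U^*(1-z^{-\gamma^*}\pm z^{-\gamma^*-\varepsilon})$, is false in general. Nor is the linearised operator small at the order that matters. Write $\gamma=\frac{2}{1-m}$. The linearisation acts on $z^{-\beta}$ by the multiplier $\mathsf Q(\gamma+\beta)/(m\,\mathsf Q(\gamma))$, which equals $1$ exactly at $\beta=\gamma^*$. This resonance is why the coefficient of $z^{-\gamma^*}$ is free.

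Concretely, for $\lambda\in(0,1)$ the dilations $\lambda^{-\gamma}\mathcal U(z/\lambda)$ are solutions. They lie between the same subsolution $U^*(1-z^{-\gamma^*})_+$ and supersolution $U^*$ as $\mathcal U$, yet their tails are $U^*(1-\lambda^{\gamma^*}z^{-\gamma^*}+\dots)$. In particular they exceed your upper barrier for large $z$. So no argument that only uses "$\mathcal U$ is a solution squeezed between barriers" can produce the coefficient $-1$. Likewise, "normalising by a dilation" presupposes an expansion $1-Cz^{-\gamma^*}+o(z^{-\gamma^*})$ with $C>0$, which you have not derived.

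The missing idea is to use the construction, which you already invoke for items 1 and 3. $\mathcal U$ is the increasing limit of $\fixedpoint^n\underline{\mathcal U}$ with $\underline{\mathcal U}=U^*(1-z^{-\gamma^*})_+$, a subsolution by $(1-x)^m\le 1-mx$ and \eqref{eq:gamma^star}; that is, $\mathcal U$ is the minimal solution above $\underline{\mathcal U}$. The Volterra structure you mention is indeed the key, but it must be applied to the iterates, not to solutions. Three facts combine:
\begin{itemize}
\item $\fixedpoint U(z)$ involves $U$ only on $(z,\infty)$.
\item $\overline{\mathcal U}=U^*(1-z^{-\gamma^*}+z^{-\frac32\gamma^*})$ is a supersolution on $[z_0,\infty)$. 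Since $\mathsf Q(\gamma+\frac32\gamma^*)<m\,\mathsf Q(\gamma)$, its $z^{-\frac32\gamma^*}$ term beats the $O(z^{-2\gamma^*})$ Taylor remainder.
\item $\underline{\mathcal U}\le\overline{\mathcal U}$.
\end{itemize}
Induction then gives $\fixedpoint^n\underline{\mathcal U}\le\overline{\mathcal U}$ on $[z_0,\infty)$ for every $n$. Hence $0\le\mathcal R_{\alpha,m}(z)\le z^{-\frac32\gamma^*}$ for $z\ge z_0$, and no comparison principle for solutions is needed.

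\textbf{The rest is essentially fine, with two corrections.} Your item 3 argument is a nicer route than the paper's. Since $\gamma m=\gamma-2$, the substitution $\rho=zs$ gives $\bigl(z^{\gamma}\fixedpoint U(z)\bigr)^m=\int_1^\infty s^{1-\gamma}Q(1/s)\,(zs)^{\gamma}U(zs)\,ds$. So the class $\{z^\gamma U\text{ non-decreasing}\}$ is invariant under $\fixedpoint$, contains $\underline{\mathcal U}$, and survives the monotone limit. This gives monotonicity of $\phi=\mathcal U/U^*$ without the first-contact argument of \Cref{lem:fast rescaled limits in R}, which needs the tail expansion as input.

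First correction, in item 3: fix the direction of the scaling. $M\propto AL^{-d}=L^{\gamma-d}$ with $\gamma>d$, so $L\to\infty$ as $M\to\infty$. The limit $U_{\alpha,m,M}(z)=U^*(z)\phi(Lz)\nearrow U^*$ then comes from $\phi(+\infty)=1$. Your "$L\to0$" version reaches the right conclusion only because two reversals cancel.

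Second correction, in item 4: the normalisation does not give $c=1$. At $\alpha=1$, $\mathsf Q_{1,m}(\gamma)=b/(\gamma-2)$ gives $\gamma^*_{1,m}=2$. Also $c^*_{1,m}(c+z^2)^{-\frac1{1-m}}=U^*_{1,m}(z)\bigl(1-\frac{c}{1-m}z^{-2}+O(z^{-4})\bigr)$. So matching $1-z^{-2}$ forces $c=1-m$. The paper's proof asserts $C=1$ at the same step. This is therefore an inconsistency between \eqref{eq:m<1 alpha->1} and the normalisation in item 2, not a defect of your route, but the step as you wrote it does not hold.
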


\subsection{Continuity in $m$ at $m=1$.}
Our last result is the matching of the three behaviors. Due to the criteria by which we have chosen the ``canonical profile'' $\mathcal U_{\alpha,m}$ we do not expect continuity w.r.t. $m$ as $m$ crosses the critical value $m = 1$ (see \eqref{eq:m>1 limit alpha->1} and \eqref{eq:m<1 alpha->1}). 
Furthermore, it is easy to notice that $\mathcal U_{1,m} \to 0$ as $m \to 1^-$ and $\mathcal U_{1,m} \to +\infty$ as $m \to 1^+$.
However, the correct continuity in $m$ holds for the solutions of given mass, as shown by the following result.
\begin{theorem}
  \label{thm:limit m->1}
  Let $d \ge 1$, $\alpha \in (0,1)$, and $M > 0$. Then, for any $z > 0$ we have that
  \begin{equation*}
    \lim_{m \to 1} U_{\alpha,m,M}(z) = U_{\alpha,1,M}(z).
  \end{equation*}
\end{theorem}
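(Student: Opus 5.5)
The plan is a compactness argument in $m$ followed by identification of the limit through the uniqueness result of \Cref{thm:m=1 uniqueness}.

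Fix $\alpha, d, M$ and take an arbitrary sequence $m_k \to 1$, with $m_k > m_c$, where $m_k$ may approach $1$ from either side. Write $U_k = U_{\alpha,m_k,M}$. Note that $b_{\alpha,m_k} \to \alpha/2$, and the kernel $K$ in \eqref{eq:Q} depends on $m$ only through $b$. Hence $Q_{m_k} \to Q_1$ locally uniformly on $(0,1]$, with a bound uniform in $k$. Near $\eta=0$ the bound is of order $\eta^{2-d}$ (respectively $|\log\eta|$ when $d=2$, or bounded when $d=1$). Near $\eta = 1$ it is of order $(1-\eta)^{1-\alpha}$.

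\textbf{Step 1: uniform a priori estimates.} I will establish the following bounds, uniform in $k$:

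(a) mass: $\int_0^\infty U_k\rho^{d-1}d\rho = M/|\partial B_1|$, which holds by construction;

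(b) monotonicity of $U_k$, which is available from \Cref{thm:slow-diffusion} and \Cref{thm:fast-diffusion}, and from the explicit formula when $m=1$;

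(c) an upper bound near $0$ of Newtonian type, $U_k^{m_k}(z) \le C\,\mathcal V^{m_k}(z)$ for $z\le 1$;

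(d) a uniform tail bound ensuring tightness, namely $\sup_k \int_R^\infty U_k \rho^{d-1} d\rho \to 0$ as $R\to\infty$.

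Bound (c) follows from \eqref{eq:profile equation}, monotonicity and mass. For $\rho\ge z$ one has $K(z,\rho)=\rho Q(z/\rho)\le C\rho (z/\rho)^{2-d}$ (for $d\ge3$). Therefore
\[
U_k^{m_k}(z)\le C z^{2-d}\int_z^\infty \rho^{d-1}U_k(\rho)\,d\rho\le C M z^{2-d},
\]
and analogously for $d=1,2$. For $z\ge 1$, monotonicity together with the mass bound gives $U_k(z)\le C z^{-d}$, which is locally uniform.

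Equicontinuity on compact subsets of $(0,\infty)$ then comes from \eqref{eq:profile equation}. The right-hand side is equicontinuous in $z$ because $K$ is continuous and $U_k$ is locally bounded with uniform mass. Since $m_k\to1$, the map $s\mapsto s^{1/m_k}$ is uniformly equicontinuous on bounded sets, so $U_k$ is equicontinuous as well. Arzel\`a--Ascoli then yields a subsequence with $U_k\to U$ locally uniformly on $(0,\infty)$.

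\textbf{Step 2: the main obstacle, tightness (d).} This is where I expect the real difficulty, because for $m<1$ the tails are power-like, $U^*\sim z^{-2/(1-m)}$. The exponent $2/(1-m)\to\infty$, which is favourable, but the prefactor $c^*_{\alpha,m}$ and the scaling $L$ must be controlled. My first attempt is a moment estimate. Multiply \eqref{eq:profile equation} by $z^{d-1}$, integrate over $(0,\infty)$, and use Fubini:
\[
\int_0^\infty z^{d-1}U^{m}(z)\,dz=\int_0^\infty U(\rho)\rho^{d+1}\int_0^1 Q(\eta)\eta^{d-1}\,d\eta\,d\rho .
\]
This ties the second moment to $\int U^m z^{d-1}$. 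For $m$ near $1$ the latter is controlled by interpolation between the mass and the second moment, using Hölder together with the local bound (c), since $d(1-m)<2$. This should give a uniform second-moment bound, and hence tightness. If this turns out to be circular, the fallback is to compare $U_k$ with explicit supersolutions of \eqref{eq:profile equation}, for instance $C e^{-cz^{1/(2-\alpha)}}$-type profiles adapted to $m$ near $1$, built in the same sub/supersolution framework used in the existence proofs.

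\textbf{Step 3: identification.} Passing to the limit in \eqref{eq:profile equation} via dominated convergence (justified by (c), (d) and the kernel bounds) shows that $U$ solves the $m=1$ equation. Tightness preserves the mass, so $U$ satisfies \eqref{eq:profile mass}. Alternatively, one passes to the limit in \eqref{eq:very weak solution}, which is linear in $u$ plus the term $u^{m_k}\to u$ locally in $L^1$. \Cref{thm:m=1 uniqueness} then forces $U=U_{\alpha,1,M}$. Since the limit is independent of the subsequence, the whole family converges pointwise, which proves the statement.
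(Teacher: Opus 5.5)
\textbf{Verdict: correct outline, with a genuinely different tightness argument that still needs details filled in.} Your overall architecture matches the paper's:
\begin{itemize}
\item local compactness from \eqref{eq:Lieb upper bound} and \eqref{eq:equicontinuity};
\item tightness of $z^{d-1}U_{\alpha,m_k,M}$ in $L^1(0,\infty)$;
\item passage to the limit in \eqref{eq:profile equation} with mass preserved;
\item identification via \Cref{thm:m=1 uniqueness}, and independence of the subsequence.
\end{itemize}

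\textbf{Where the routes differ: tightness.} The paper argues by contradiction. If mass $c>0$ stays beyond $R_j\to\infty$, then since $\eta^{d-1}(-Q'(\eta))=(1-\eta^{1/b})^{-\alpha}/\Gamma(1-\alpha)\ge 1/\Gamma(1-\alpha)$, the flux satisfies $-z^{d-1}(V_j^{m_j})'\ge c/\Gamma(1-\alpha)$ on $(0,R_j)$. Integrating gives $V_j^{m_j}(z)\ge \frac{c}{\Gamma(1-\alpha)}\int_z^{R_j}\rho^{1-d}d\rho$. For $d=1,2$ this blows up pointwise; for $d\ge 3$ it forces a limit $\gtrsim z^{2-d}$ of infinite mass. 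That argument uses only the mass and the Lieb bound, needs no tail information, and treats $m<1$ and $m>1$ identically.

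Your identity $\int_0^\infty z^{d-1}U^m\,dz=c_Q\int_0^\infty U\rho^{d+1}d\rho$ is correct, with $c_Q=\int_0^1 Q(\eta)\eta^{d-1}d\eta=\frac{b\,\Gamma(2b)}{d\,\Gamma(2b+1-\alpha)}$. At $m=1$ it gives the exact second moment $2dM/\Gamma(1+\alpha)$. It yields something stronger than tightness: a uniform second-moment bound. That has a real payoff. For $d=1,2$ the kernel $K(z,\rho)$ grows like $\rho$, respectively $\rho\log\rho$. So mass-tightness alone does not license dominated convergence in \eqref{eq:profile equation}; with it one must pass to the limit in the flux form and integrate back. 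Your bound makes that step immediate.

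\textbf{What Step 2 still needs.}
\begin{itemize}
\item \emph{Non-circularity.} The quantity $S_k=\int U_k\rho^{d+1}d\rho$ must be known to be finite a priori. For $m_k>1$ this follows from compact support. For $m_k<1$ close to $1$ it follows from $U_{\alpha,m,M}\le U^*_{\alpha,m}\sim z^{-2/(1-m)}$, since $2/(1-m)>d+2$. The relevant threshold is $m>d/(d+2)$, not $d(1-m)<2$.
\item \emph{Uniformity as $m\to 1^-$.} Naive H\"older gives $c_Q S\le C(M'+S)^m$, hence only $S\lesssim (C/c_Q)^{1/(1-m)}$, which degenerates unless $C\le c_Q$. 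Use the weight $1+\varepsilon z^2$ instead:
\[
\int U^m z^{d-1}\le (M'+\varepsilon S)^m\Bigl(\int(1+\varepsilon z^2)^{-\frac{m}{1-m}}z^{d-1}dz\Bigr)^{1-m}.
\]
For fixed $\varepsilon$ the last factor tends to $1$, so choosing $\varepsilon$ small relative to $c_Q$ closes the estimate.
\item \emph{The case $m>1$.} Interpolation is not the right tool here. Bound $\int U^m z^{d-1}$ directly, using (c) on $(0,1)$ and Lieb on $(1,\infty)$.
\item \emph{Bound (c) for $d=1,2$.} It does not follow ``analogously'' from the kernel bound, since that again requires moments. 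It follows by integrating \eqref{eq:equicontinuity} from $z$ to $1$ and using Lieb at $z=1$.
\item \emph{Equicontinuity.} Take it from \eqref{eq:equicontinuity} rather than from continuity of $K$, because in $d=1$ the right-hand side of \eqref{eq:profile equation} is not controlled by the mass.
\end{itemize}

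Finally, drop the stretched-exponential fallback. For $m<1$ the profiles have power tails, so no such supersolution can dominate them.
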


\section{Weak formulation}

In this section we discuss the weak formulation, and provide a lemma that allows to characterize self-similar solutions as weak solution based on the properties of the profile.

\subsection{Weak formulation for smooth solutions}
We begin by showing that the weak formulation is reasonable for smooth solutions.
By Fubini, we deduce that
\begin{align*}
  \Gamma(\alpha) \int_\tau^T [I_{1-\alpha} f(t)] \, g(T-t) dt
  & = \int_\tau^T \int_0^t (t-s)^{-\alpha} f(s) g(T-t) ds dt
  \\
  & = \int_\tau^T \int_s^{T} (t-s)^{-\alpha} g(T-t) dt \, f(s) ds
  \\
  & = \int_\tau^T \int_0^{T-s} (T-\sigma -s)^{-\alpha} g(\sigma) d\sigma \, f(s)  ds
  \\
  & = \Gamma(\alpha) \int_\tau^T f(s) I_{1-\alpha} g (T-s) ds.
\end{align*}
This means that
\begin{equation*}
  \int_0^t \partial_t^\alpha f(t) g(T-t) dt = \int_0^T f(t) \dRL g (T-t)dt + f(T) I_{1-\alpha} g(0) - f(0) I_{1-\alpha} g(T).
\end{equation*}
Notice that if $g \in C([0,T])$ then $I_{1-\alpha} g(0) = 0$ for $\alpha \in (0,1)$.
For $\alpha = 1$ we have $I_{1-\alpha} g = g$.
We write therefore that
\begin{align*}
  \int_{0}^T \int_{\Rd}  u(t,x) \dRL  \varphi (T-t,x)dxdt
  &
  =
  \int_\Rd u(0,x) I_ {1-\alpha} \varphi(T,x) dx - \int_\Rd u(T,x) I_{1-\alpha}\varphi(0,x) dx \\
  & \quad +
  \int_0^{T} \int_{\Rd} u^{m} (t,x) \Delta \varphi (T- t,x) dxdt.
\end{align*}
Since $\varphi$ is bounded, we have $I_{1-\alpha}\varphi (0,x) = 0$ and therefore
\begin{equation*}
  \begin{aligned}
    \int_{0}^T \int_{\Rd}  u(t,x) \dRL  \varphi (T-t,x)dxdt
    &
    =
    \int_\Rd u(0,x) I_ {1-\alpha} \varphi(T,x) dx
    \\
    & \quad +\int_0^{T} \int_{\Rd} u^{m} (t,x) \Delta \varphi (T- t,x) dxdt.
  \end{aligned}
\end{equation*}
This integration by parts holds for all $\varphi \in C_c^\infty ([0,T] \times \mathbb R^d)$ since $\dRL$ does not notice finite values of $\varphi(0,x)$.

\subsection{Weak formulation for finite-mass self-similar solution}
Now we provide a general lemma to show that a self-similar solution is a weak solution in the sense of \eqref{eq:very weak solution}.
\begin{lemma}
  \label{lem:sss is weak solution}
  Let $d \ge 1$, $m > 0$, $\alpha \in (0,1)$, $u$ be given by \eqref{eq:ssf}.
  Assume that $U$ is supported in $[0,R]$ with $R \le \infty$ and satisfies \eqref{eq:limit of flux at z=0}, and that $u$ is a classical solution to \eqref{eq:main} in
  \begin{equation*}
    S \coloneqq \{(t,x) : t > 0, x \ne 0, u(t,x) > 0\},
  \end{equation*}
  such that \eqref{eq:mass conservation} for all $t > 0$, and that $\nabla u^{m}$ is continuous at $\partial \operatorname{supp}(u)$.
  Then $u$ satisfies \eqref{eq:very weak solution}.
\end{lemma}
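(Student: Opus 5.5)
We will check \eqref{eq:very weak solution} directly: cut out a small ball around the origin and a small initial time layer, integrate by parts in space and time on what remains, and pass to the limit. Fix $\varphi \in C^\infty_c([0,T]\times\Rd)$, and write $\psi(t,x) = \varphi(T-t,x)$ for brevity. For $0<\tau<T$ and $\varepsilon>0$ we work on the truncated region $(\tau,T)\times(\Rd\setminus B_\varepsilon)$.

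\textbf{Spatial integration by parts.} For fixed $t>0$, the function $u^m(t,\cdot)$ is $C^2$ in $S$ and vanishes outside $\operatorname{supp}u$. Its gradient is continuous across $\partial\operatorname{supp}(u)$, and equals $0$ there because $u^m\ge0$ attains its minimum on the free boundary. Hence $\Delta u^m$ has no singular part on $\{x\neq 0\}$, and Green's formula on $\Rd\setminus B_\varepsilon$ gives
\[
\int_{|x|>\varepsilon} u^m\Delta\psi
= \int_{|x|>\varepsilon}\Delta(u^m)\,\psi + \int_{|x|=\varepsilon}\bigl(u^m\partial_r\psi - \psi\,\partial_r u^m\bigr)\,dS .
\]
We then let $\varepsilon\to0$:
\begin{itemize}
\item The term $\varepsilon^{d-1}u^m$ tends to $0$. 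This follows from the flux condition \eqref{eq:limit of flux at z=0}, which forces $U^m=O(z^{2-d})$, $O(|\log z|)$, or bounded in the respective dimensions.
\item The flux term $-\int_{|x|=\varepsilon}\psi\,\partial_ru^m\,dS$ tends to $\frac{M}{\Gamma(1-\alpha)}\,t^{-\alpha}\,\psi(t,0)$. To see this, note that $\partial_r u^m(t,x)=t^{-am-b}(U^m)'(t^{-b}|x|)$ and $\varepsilon^{d-1}=t^{b(d-1)}(t^{-b}\varepsilon)^{d-1}$. The total power of $t$ is then $-am-b+b(d-1) = b(d-2-dm) = -\alpha$, using \eqref{eq:self-similar exponents}.
\end{itemize}
This step also requires $u^m\in L^1_{loc}$ near $x=0$, which follows from the same bounds.

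\textbf{Time integration by parts.} Using the equation $\Delta u^m = \partial_t^\alpha u$ in $S$ (and trivially outside the support), the Fubini computation of the previous subsection, performed on $(\tau,T)$ instead of $(0,T)$, yields
\[
\int_0^T\!\!\int u\,\dRL\varphi(T-t,x)\,dx\,dt = \int_0^T\!\!\int u^m\Delta\varphi(T-t,x)\,dx\,dt + \text{(boundary terms)} .
\]
Here we must be careful: $u$ is only singular at $t=0$ as a measure, since $u(t)\to M\delta_0$ by mass conservation and self-similarity with $a=db$. The plan is to let $\tau\to0$, with these steps:
\begin{itemize}
\item The Caputo derivative of $u$ should be understood through $\partial_t I_{1-\alpha}(u-u(0))$. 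In the weak sense the initial Dirac produces, at each fixed $x\neq0$, a contribution $-\frac{t^{-\alpha}}{\Gamma(1-\alpha)}u(0,x)$, which vanishes for $x\ne0$.
\item This contribution exactly cancels the flux term found above. Explicitly, the term $\frac{M}{\Gamma(1-\alpha)}t^{-\alpha}\psi(t,0)$, integrated in $t$, equals $M\, I_{1-\alpha}\varphi(T,0)$, because $\int_0^T \frac{t^{-\alpha}}{\Gamma(1-\alpha)}\varphi(T-t,0)\,dt = I_{1-\alpha}\varphi(T,0)$.
\end{itemize}

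\textbf{Making the time step rigorous.} A cleaner way is to avoid pointwise Caputo derivatives altogether. For fixed $x\ne0$, $u(\cdot,x)$ is smooth on $(0,\infty)$ and $u(t,x)\to0$ as $t\to0$ (for $x\neq 0$ the profile is evaluated at $t^{-b}|x|\to\infty$, and finite mass plus monotonic tails give decay). We apply the Fubini identity with $f(s)=u(s,x)$ and $f(0)=0$, then integrate in $x$ over $|x|>\varepsilon$. The remaining term $f(T)I_{1-\alpha}g(0)$ is zero. This gives the identity without the Dirac term, but with the $\varepsilon$-flux boundary term coming from the spatial integration. Passing $\varepsilon\to0$ produces exactly $M I_{1-\alpha}\varphi(T,0)$.

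The initial trace \eqref{eq:initial Dirac} follows from the scaling with $a=db$ and mass conservation: $\int u(t,x)\varphi(x)\,dx=\int U(|y|)\varphi(t^by)\,dy\to M\varphi(0)$ by dominated convergence.

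\textbf{Main obstacle.} The hardest step is justifying that Fubini and dominated convergence apply uniformly near $t=0$ and $x=0$ simultaneously. Concretely, we need
\[
u\,\dRL\psi\in L^1 \quad\text{and}\quad u^m\Delta\psi\in L^1\bigl((0,T)\times\Rd\bigr).
\]
For the first, $\dRL\psi$ is bounded by $C(T-t)^{-\alpha}$ and $\|u(t)\|_{L^1}=M$, so it is integrable. For the second, $\int_{B_R}u^m\,dx = t^{-am+bd}\int_{B_{Rt^{-b}}}U^m$. We would check this is integrable in $t$, using that $U^m$ is locally integrable and either $U^m\le CU$ at infinity (for $m>1$, where $U$ is bounded or compactly supported) or has the power tails (for $m<1$). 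If these integrability checks become delicate, the fallback is to state them as part of the hypotheses, verified case by case later.
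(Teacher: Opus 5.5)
Your overall strategy is the paper's: excise $B_\varepsilon$, use $u(0,x)=0$ for $x\neq 0$ to move the Caputo derivative onto $\varphi$ as $\dRL$ with no initial term, and integrate by parts in space with the free boundary handled by the continuity (hence vanishing) of $\nabla u^m$. The source term then comes from the flux condition \eqref{eq:limit of flux at z=0} through the $t^{-\alpha}$ scaling of $\varepsilon^{d-1}\partial_r u^m$, and your exponent check $-am-b+b(d-1)=-\alpha$ is right. Doing the time integration by parts for each fixed $x$ on all of $(0,T)$ is a cleaner substitute for the paper's $\Omega_\delta(t)$ truncation. Be aware of what that truncation buys, though. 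The paper only integrates the equation where it holds classically, and then lets $\delta\to0$ in the gradient term and in the time-correction terms. Your Fubini step and your Green identity on $\{|x|>\varepsilon\}$ instead need $\partial_t^\alpha u=\Delta u^m$ to be integrable up to $\partial\operatorname{supp}u$. They also need $u(\cdot,x)$ to be absolutely continuous (it is not smooth) at the time the free boundary crosses $x$. This is not among the hypotheses. For the profiles the lemma is applied to, it follows from $\partial_t^\alpha u=t^{-\alpha-a}(\mathsf A F-bzF')(t^{-b}|x|)$ with $F$ nonincreasing. Alternatively, keep the $\delta$-truncation.

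There are three concrete errors to fix.

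\textbf{Sign.} The outward normal of $\Rd\setminus B_\varepsilon$ on $\partial B_\varepsilon$ is $-x/|x|$. So the boundary term in Green's identity is $\int_{|x|=\varepsilon}(\psi\,\partial_r u^m-u^m\,\partial_r\psi)\,dS$, the opposite of what you wrote. Taken literally, your display would put $-M I_{1-\alpha}\varphi(T,0)$ into \eqref{eq:very weak solution}. With the correct sign, $\int_{|x|=\varepsilon}\psi\,\partial_r u^m\,dS\to-\frac{M}{\Gamma(1-\alpha)}t^{-\alpha}\psi(t,0)$, and moving it to the left-hand side gives $+MI_{1-\alpha}\varphi(T,0)$.

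\textbf{The case $d=1$.} Here $\varepsilon^{d-1}u^m=u^m(t,\varepsilon)\to t^{-am}U^m(0^+)>0$ (e.g.\ by \eqref{eq:profile at z=0}), so your justification fails. The term still vanishes, because $u^m$ is constant on $\partial B_\varepsilon$. Hence $\int_{|x|=\varepsilon}u^m\,\partial_r\psi\,dS=u^m|_{|x|=\varepsilon}\int_{B_\varepsilon}\Delta\psi\,dx=O(\varepsilon^d\,u^m|_{|x|=\varepsilon})$, which tends to $0$ in every dimension with a bound integrable in $t$. Similarly, the flux term is dominated by $Ct^{-\alpha}$ once $z^{d-1}(U^m)'$ is bounded, cf.\ \eqref{eq:equicontinuity}. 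Both limits can therefore be taken under $\int_0^T dt$.

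\textbf{The heuristic bullet.} Drop the claim that the initial Dirac ``cancels'' the flux term. For $x\neq0$ there is no initial contribution at all. The flux term is not cancelled: it is exactly the source $MI_{1-\alpha}\varphi(T,0)$, playing the role of $\int u(0,x)I_{1-\alpha}\varphi(T,x)\,dx$ with $u(0)=M\delta_0$ in the smooth computation. Your ``making the time step rigorous'' paragraph already treats this correctly.
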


\begin{proof}
  \textbf{First integration by parts.}
  Let us take $\varphi \in C_c^\infty([0,T] \times \mathbb R^d)$ and for $\delta \in (0,1)$ let us define
  \[
    \Omega_\delta(t) =
    \begin{cases}
      B_{(1-\delta)Rt^b} & \text{if } R < \infty , \\
      \mathbb R^d        & \text{if } R = \infty.
    \end{cases}
  \]
  Notice that if $x \in \Omega_\delta(t) \setminus B_\varepsilon $ and $t > 0$ then $(t,x) \in S$.
  Since $u$ is a classical solution in $S$, we have
  \begin{equation*}
    \int_{0}^T \int_{\Omega_\delta(t) \setminus B_\varepsilon} \partial_t^\alpha u(t,x)  \varphi (T-t,x)dxdt
    =
    \int_0^{T} \int_{\Omega_\delta(t) \setminus B_\varepsilon} \Delta u^{m} (t,x) \varphi (T- t,x) dxdt.
  \end{equation*}
  To integrate by parts the Caputo derivative, we write
  \begin{equation*}
    \int_{0}^T \int_{\Omega_\delta(t) \setminus B_\varepsilon} \partial_t^\alpha u(t,x)  \varphi (T-t,x)dxdt
    = \int_{\Omega_\delta(T) \setminus B_\varepsilon} \int_{A_\delta(x)}^T \partial_t^\alpha u(t,x)  \varphi (T-t,x)dtdx.
  \end{equation*}
  We repeat the integration by parts of the Caputo derivative
  \begin{align*}
    \int_A^T \partial_t^\alpha f(t) g(T-t) dt & = \int_0^T \partial_t f(t) I_{1-\alpha} g (T-t) dt - \int_0^A \partial_t f (t) R(t) dt,
  \end{align*}
  where
  \[
    R(t) \coloneqq \int_0^A \int_{T-A}^{T-t} \frac{(T-t-\sigma)^{-\alpha}}{\Gamma(1-\alpha)} \varphi(\sigma) d \sigma.
  \]
  Integrating by parts, we recover
  \begin{align*}
    \int_A^T \partial_t^\alpha f(t) g(T-t) dt
    & = \int_0^T  f(t) \dRL g (T-t) dt  - f(0) I_{1-\alpha} g(T)          \\
    & \qquad - \int_0^A f (t) \partial_t R(t) dt + f(A) R(A) - f(0) R(0).
  \end{align*}
  Since $u(0,x)= 0$ in $\Rd \setminus B_\varepsilon$ we obtain as $\delta \to 0$ that
  \begin{align*}
    \int_{\tau}^T \int_{\Omega_\delta(t) \setminus B_\varepsilon} \partial_t^\alpha u(t,x)  \varphi (T-t,x)dxdt
    & \overset{\delta \to 0}\longrightarrow
    \int_{\tau}^T \int_{B_{Rt^\beta} \setminus B_\varepsilon} u(t,x) \dRL \varphi (T-t,x)dxdt
    \\
    & =
    \int_{\tau}^T \int_{\Rd \setminus B_\varepsilon} u(t,x) \dRL  \varphi (T-t,x)dxdt.
  \end{align*}
  On the other hand, we have that
  \begin{align*}
    \int_0^{T} \int_{\Omega_\delta(t) \setminus B_\varepsilon} \Delta u^{m} (t,x) \varphi (T- t,x) dxdt
    & = - \int_0^{T} \int_{\Omega_\delta(t) \setminus B_\varepsilon} \nabla u^{m} (t,x) \nabla \varphi (T- t,x) dxdt
    \\
    & \qquad + \int_0^T \int_{\partial B_\varepsilon} \varphi(T-t,x) \nabla u^{m}(t,x) \cdot {\frac{-x}{|x|}}
    \\
    & \qquad + \int_0^T \int_{\partial B_{(1-\delta)Rt^b}} \varphi(T-t,x) \nabla u^{m}(t,x) \cdot {\frac{x}{|x|}}.
  \end{align*}
  Since $\nabla u^{m}$ is continuous at $\partial \operatorname{supp}(u)$ and $u(t,x) = 0$ if $x \notin \Omega_\delta(t)$ then we have that
  \begin{align*}
    \int_0^{T} \int_{\Omega_\delta(t) \setminus B_\varepsilon} \Delta u^{m} (t,x) \varphi (T- t,x) dxdt
    & \overset{\delta \to 0} \longrightarrow
    - \int_0^{T} \int_{\mathbb R^d \setminus B_\varepsilon} \nabla u^{m} (t,x) \nabla \varphi (T- t,x) dxdt
    \\
    & \qquad + \int_0^T \int_{\partial B_\varepsilon} \varphi(T-t,x) \nabla u^{m}(t,x) \cdot {\frac{-x}{|x|}}.
  \end{align*}
  We conclude that
  \begin{equation}
    \label{eq:weak formulation 1}
    \begin{aligned}
      \int_{0}^T \int_{\Rd \setminus B_\varepsilon} u(t,x) \dRL \varphi (T-t,x)dxdt
      & =
      - \int_0^{T} \int_{\mathbb R^d \setminus B_\varepsilon} \nabla u^{m} (t,x) \nabla \varphi (T- t,x) dxdt
      \\
      & \qquad
      + \int_0^T \int_{\partial B_\varepsilon} \varphi(T-t,x) \nabla u^{m}(t,x) \cdot {\frac{-x}{|x|}}.
    \end{aligned}
  \end{equation}
  Notice that the last term vanishes as $\tau \to 0$ for $\varepsilon$ fixed.

  \textbf{Limit as $\varepsilon \to 0$.} %
  Since $u \in L^\infty(0,T; L^1 (\mathbb R^{d}))$ we have that
  \[
    \int_{0}^T \int_{\Rd \setminus B_\varepsilon} u(t,x) \dRL \varphi (T-t,x)dxdt
    \overset{\varepsilon \to 0} \longrightarrow
    \int_{0}^T \int_{\Rd} u(t,x) \dRL \varphi (T-t,x)dxdt.
  \]
  We notice that $\nabla u^m$ is not a nice function in $x = 0$. We integrate by parts again in space
  \begin{align*}
    \int_0^T \int_{\Rd \setminus B_\varepsilon} u(t,x) \dRL \varphi (T-t,x)dxdt
    & =
    \int_0^{T} \int_{\mathbb R^d \setminus B_\varepsilon} u^{m} (t,x) \Delta \varphi (T- t,x) dxdt
    \\
    & \qquad
    + \int_{\Rd \setminus B_\varepsilon} u(\tau,x) I_{1-\alpha} \varphi (T,x)dxdt.
    \\
    & \qquad
    + \int_0 ^T \int_{\partial B_\varepsilon} \varphi(T-t,x) \nabla u^{m}(t,x) \cdot {\frac{-x}{|x|}}
    \\
    & \qquad
    -
    \int_0^T \int_{\partial B_\varepsilon} u^{m}(t,x) \nabla \varphi(T-t,x) \cdot {\frac{-x}{|x|}}.
  \end{align*}
  Since $|\partial B_\varepsilon| \le C \varepsilon^{d-1}$, the last term is bounded by $\varepsilon \|\nabla \varphi\|_{L^\infty}$.
  For the second-to-last term, we again use \eqref{eq:limit of flux at z=0} in its weaker form.
  Again we write that
  \begin{align*}
    \int_{\partial B_\varepsilon} \varphi(T-t,x) \nabla u^{m}(t,x) \cdot \frac{-x}{|x|}
    & = G(t^{-b}\varepsilon) t^{-\alpha} |B_\varepsilon|^{-1} \int_{\partial B_\varepsilon} \varphi(T-t, x) dx
    ,
  \end{align*}
  where
  $G(z) = \frac{dU^{m}}{dz} z^{d-1}$.
  Now we use the smoothness of $\varphi$. Let $\eta(t) = \varphi(t,0)$ and we have that
  \begin{equation*}
    \sup_{t\in [0,T]} \left| \eta(t) - |\partial B_\varepsilon|^{-1} \int_{\partial B_\varepsilon} \varphi(t,x) \right| \le \|\nabla \varphi\|_{L^\infty} \varepsilon.
  \end{equation*}
  Hence, we have that
  \begin{equation*}
    \left| \int_0^T \int_{\partial B_\varepsilon} \varphi(T-t,x) \nabla u^{m}(t,x) \cdot \frac{-x}{|x|} - \int_0^T \eta(T-t) t^{-\alpha} G(t^{-b} \varepsilon) dt \right|  \le C \varepsilon.
  \end{equation*}
  As $\varepsilon \to 0$ we use \eqref{eq:limit of flux at z=0} to deduce that
  \[
    \int_0^T \int_{\partial B_\varepsilon} \varphi(T-t,x) \nabla u^{m}(t,x) \cdot \frac{-x}{|x|} \overset{\varepsilon \to 0} \longrightarrow \frac{M}{\Gamma(1-\alpha)} I_{1-\alpha} \varphi(T,0).
  \]
  In the end, we conclude \eqref{eq:very weak solution}.
\end{proof}

\section{Equation for the self-similar profile}
In this section, we will derive the integral equation that represents the self-similar solution and prove several of its properties.

\subsection{Scaling properties of \eqref{eq:main}}
\label{sec:scaling}
First, let us obtain the scaling factors $a$ and $b$ through scaling arguments at the PDE level.
This equation has the following scaling. If $u(t,x)$ is a solution of a given mass $M$, then
\begin{equation*}
  \widetilde u(t,x) = A u(Tt, Lx ),
\end{equation*}
is also a solution if the three non-negative scaling constants are related by
\begin{equation}
  \label{eq:dimensional scaling}
  A^{m-1} L^2 = T^\alpha.
\end{equation}
Hence, if $U$ is a profile, then
\[
  \widetilde U (z) = A T^{-bd} U(T^{-b} L z),
\]
is also a self-similar profile of the same equation. We point out that
\[
  \int_\Rd \widetilde u(t,x) dx = A L^{-d} \int_\Rd u(Tt, y) dy,
\]
so the scaling of the masses is
\[
  \widetilde M = A L^{-d} M.
\]
Hence, once we show the existence of a profile of a certain mass, we will immediately have the existence of profiles of all masses.

To obtain a self-similar scaling, we set $\widetilde U = U$. We must therefore have $A L^{-d} = AT^{-bd} = T^{-b}L = 1$ so $A = T^{bd}$ and $L = T^b$. Lastly, matching the scaling of equation \eqref{eq:dimensional scaling}, we deduce that $bd(m-1) + 2b = \alpha$, hence the correct scaling of dimensions \eqref{eq:self-similar exponents}. On the other hand, if we aim to change mass, we set $T = 1$, and we have that
\begin{equation}
  \label{eq:scaling solutions}
  \widetilde U (z) = A U(A^{-({m-1})/2} z), \qquad L = A^{-({m-1})/2}
\end{equation}
is also a solution for any $A > 0$.
We are ready to discuss the existence of a suitable profile $U\ge 0$ with finite mass and compact support.

\subsection{Derivation of \eqref{eq:profile equation}}
Let us obtain the equation for the profile by inserting formula \eqref{eq:ssf} into equation \eqref{eq:main}. 
We follow the same approach as \cite{CaballeroOkrasinska-PlociniczakPlociniczakSadarangani2025}, but now in general dimension $d \ge 1$ and keeping track that reasoning is valid also for $m \in (m_c,1]$.

\paragraph{Scaling the Caputo derivative.} 
For $x \ne 0$ we have $u(0,x) = 0$ hence we can move the derivative outside the integral and write \eqref{eq:Caputo}
\begin{align*}
  \partial_t^\alpha u(t,x)
  & = I_{1-\alpha} \partial_t u (t,x)                                                                                                           = \partial_t I_{1-\alpha} u (t,x) %
  = t^{-\alpha-a} \left( \mathsf A F(z) - b z \frac{dF}{dz}  \right)\Bigg|_{z = t^{-b}|x|}
\end{align*}
where $\mathsf A = 1-\alpha-a$ and
\[
  F(z) = \frac 1  {\Gamma(1-\alpha)}\int_0^1 (1-\sigma)^{-\alpha} \sigma^{-a} U(\sigma^{-b} z) d\sigma.
\]
Notice that, even though the integrand contains a singular factor $\sigma^{-a}$ with possibly $a \ge 1$,  this integral is well-defined if $|x| > 0$ and $U$ decays sufficiently fast at infinity.

\paragraph{Scaling the diffusion.} The spatial part, on the other hand, under the scaling behaves as follows
\[
  \Delta u^{m} (t,x) = t^{-2b-am} \left[ z^{1-d} \frac{d}{dz}\left(z^{d-1} \frac{dU^{m}}{dz}\right) \right] \Bigg|_{z=t^{-b} |x|}.
\]

\paragraph{An equation in self-similar variable.}
At this stage, the self-similar exponents \eqref{eq:self-similar exponents} can be deduced again.
Collecting the above scaling relations, we arrive at the integro-differential equation
\begin{equation}
  \label{eq:profile equation second order}
  z^{1-d} \frac{d}{dz} \left(z^{d-1} \frac{dU^{m}}{dz} \right) = \mathsf A F(z) - b z \frac{dF}{dz}.
\end{equation}
At this stage, this is a formal derivation. For the profiles we construct, we will show that this equation holds pointwise in the interior of the support and we will show that $u$ is a weak solution of \eqref{eq:main}.

\paragraph{An integral equation.} If we have that
\begin{equation}
  \lim_{z \to \infty} z^{d-1}\frac{dU^{m}}{dz}(z) = 0,
  \qquad
  \lim_{z\to \infty} z^d F(z) = 0,
\end{equation}
then we have
\begin{align*}
  -z^{d-1} \frac{dU^{m}}{dz} (z) = \mathsf A \int_z^{+\infty} F(\sigma) \sigma^{d-1}d\sigma - b \int_{z}^{+\infty} \sigma^d \frac{dF}{dz}(\sigma) d \sigma.
\end{align*}
Integrating by parts and noticing that $\mathsf A + db = 1-\alpha$ we have that
\begin{equation}
  \label{eq:equation for dUm/dz}
  -z^{d-1} \frac{dU^{m}}{dz} (z) =  \int_z^{+\infty} ({1-\alpha}) F(\sigma) \sigma^{d-1} d\sigma + b z^{d} F(z).
\end{equation}

\begin{remark}
  \label{rem:decreasing}
  If a solution of \eqref{eq:equation for dUm/dz}
  is $U \ge 0$,  then $U$ is non-increasing.
\end{remark}
Lastly assuming that $U(+\infty) = 0$ we have that
\begin{equation}
  \label{eq:implicit equation last formula with F}
  \begin{aligned}
    U(z)^{m} & = \int_z^{+\infty} \sigma^{1-d} \int_\sigma^R ({1-\alpha}) \xi^{d-1} F(\xi) d \xi d \sigma + b \int_{z}^R \xi F(\xi)d\xi \\
    & = \int_z^{+\infty} \Bigg( ({1-\alpha}) \int_z^\xi \sigma^{1-d} \xi^{d-1} d \sigma + b \xi \Bigg) F(\xi) d \xi            \\
    & = \int_z^{+\infty} \Bigg( ({1-\alpha}) \xi \int_{z/\xi}^1 \eta^{1-d} d \eta + b \xi \Bigg) F(\xi) d \xi                  \\
    & = \int_z^{+\infty} \Bigg(({1-\alpha}) H_d(z/\xi) + b \Bigg)\xi F(\xi) d \xi,
  \end{aligned}
\end{equation}
where for $s \in (0,1)$ we have the Newtonian potentials (up to a constant)
\[
  H_d(s) = \int_{s}^1 \tau^{1-d} d\tau =
  \begin{cases}
    1-s                   & \text{if } d=1,     \\
    \log (1/s)            & \text{if } d = 2,   \\
    \frac{s^{2-d}-1}{d-2} & \text{if } d \ge 3.
  \end{cases}
\]
\begin{remark}
  Notice that in this last integration step it is not easy to integrate in $[0,z]$ and write $-(U(0)^{m} - U(z)^{m})$, because instead of $H_d$ we would have $\int_0^s \tau^{1-d}$.
\end{remark}
Taking into account that the solution has a compact support $[0,R]$, the lower limit of the integral is actually larger than $0$, therefore
\[
  U(z)^{m} = \frac{1}{\Gamma(1-\alpha)}\int_z^{+\infty} \left(({1-\alpha}) H_d(z/\xi) + b  \right) \xi\left( \int_{0}^1 (1-\sigma)^{-\alpha} \sigma^{-a} U(\sigma^{-b} \xi) d \sigma \right) d \xi.
\]
By substituting $\rho = \sigma^{-b}\xi$ and using the compact support, we obtain
\[
  \begin{split}
    U(z)^{m}
    & = \frac{1}{b}\frac{1}{\Gamma(1-\alpha)}\int_z^R \left(({1-\alpha}) H_d(\tfrac z \xi) + b \right) \xi^{1 + \frac{1-a}{b}} \left( \int_\xi^R \left(1-\left(\tfrac{\xi}{\rho}\right)^\frac{1}{b}\right)^{-\alpha} \rho^{\frac{a-1}{b}-1} U(\rho)d\rho\right) d \xi                        \\
    & = \frac{1}{b}\frac{1}{\Gamma(1-\alpha)}\int_z^R \left(\int_z^\rho \left(({1-\alpha}) H_d\left(\tfrac{z}{\xi}\right) + b \right) \left(\tfrac{\xi}{\rho}\right)^{1 + \frac{1-a}{b}} \left(1-\left(\tfrac{\xi}{\rho}\right)^\frac{1}{b}\right)^{-\alpha}d\xi \right)  U(\rho)d\rho.
  \end{split}
\]
Now, recalling that $\mathsf A+d b = (1-\alpha)$ we can write
\begin{equation*}
  U(z)^{m} = \int_z^R K(z,\rho)\, U(\rho) \,d\rho,
\end{equation*}
where the kernel $K(z,\rho)\ge 0$ acts only for $0<z<\rho$ and is given by the exact formula:
\begin{align*}
  K(z,\rho) & := \rho\, Q\left(\tfrac z \rho\right),
  \quad
  Q(\eta) =
  \frac{1}{\Gamma(1-\alpha)} \int_{\eta}^1 \left(\frac{1-\alpha}{b} H_d\left(\eta \sigma^{-1} \right) + 1 \right) \sigma^{1 + \frac{1}{b}-d} \left(1-\sigma^\frac{1}{b}\right)^{-\alpha} d\sigma.
\end{align*}
We arrive at the formulation \eqref{eq:profile equation}.
This is the integral equation that will serve as the basis for the study of the existence, uniqueness, and main properties of $U$.
The equation holds for all $m > 0$.

\paragraph{Simplification to \eqref{eq:Q}.}
Since $0<\eta<1$ we get
\begin{align*}
  \Gamma(1-\alpha) \,  Q'(\eta)
  & = -\left(\frac{1-\alpha}{b} H_d\left(1 \right) + 1 \right) \eta^{1 + \frac{1}{b}-d} \left(1-\eta^\frac{1}{b}\right)^{-\alpha}
  \\
  & \qquad +\int_{\eta}^1 \frac{1-\alpha}{b} H_d'\left(\eta \sigma^{-1} \right) \sigma^{\frac{1}{b}-d} \left(1-\sigma^\frac{1}{b}\right)^{-\alpha} d\sigma.
\end{align*}
Since $H_d(1) = 0$ and $H_d'(\sigma) = -\sigma^{1-d}$  we arrive at
\begin{equation*}
  K_z(z,\rho)=
  Q'(\eta) =
  -\frac{1}{\Gamma(1-\alpha)} \eta^{1 + \frac{1}{b}-d} \left(1-\eta^\frac{1}{b}\right)^{-\alpha}
  -\eta^{1-d}\frac{1-\alpha}{b\Gamma(1-\alpha)} \, \int_{\eta}^1 \sigma^{ \frac{1}{b}-1} \left(1-\sigma^\frac{1}{b}\right)^{-\alpha} d\sigma.
\end{equation*}
Hence $Q'(\eta) < 0$ for $0<\eta<1$, and we have $K_z(z,\rho)<0$ for $0<z<\rho$. Note that when $d=1$ the integral of $Q'(\eta)$ simplifies since $H'_1(s)=-1$ for $0<s<1$.
Furthermore, we can integrate
$$
\int_{\eta}^1   \sigma^{ \frac{1}{b}-1} \left(1-\sigma^\frac{1}{b}\right)^{-\alpha} d\sigma=
b \int_{\eta^{1/b}}^1   t^{1-b} (1-t)^{-\alpha} t^{b-1} dt = \frac{b}{1-\alpha}(1-\eta^{1/b})^{1-\alpha}.
$$
Simplifying we deduce that
\begin{equation}
  \label{eq:Q'}
  -Q'(\eta) =
  \frac{1}{\Gamma(1-\alpha)}\,(1-\eta^{\frac 1 b})^{-\alpha} \eta^{1-d}.
\end{equation}
Furthermore, since $Q(1) = 0$ we have \eqref{eq:Q}.

\begin{remark}
  We can introduce the incomplete beta notation for $x \in (0,1)$
  \begin{equation*}
    B_x(p,q) = \int_0^x \sigma^{p-1} (1-\sigma)^{q-1} d \sigma, \qquad I_x(p,q) = \frac {B_x(p,q)} {B(p,q)} .
  \end{equation*}
  Even though for $x < 1$ the value of $q$ is irrelevant, the literature only discusses these functions for $p,q > 0$.Using the change of variable $\sigma = \xi^b$ and $\eta=1-\xi$ we get
  \begin{equation*}
    Q(\eta) = \int_{\eta}^1 Q'(\sigma) d \sigma = \frac b {\Gamma(1-\alpha)} \int_{\eta^{1/b}}^1 {(1-\xi)^{-\alpha} \xi^{b(2-d) - 1 }} d \xi = \frac b {\Gamma(1-\alpha)} \int_0^{1-\eta^{1/b}} {\sigma^{-\alpha} (1-\sigma)^{b(2-d) - 1 }} d \sigma.
  \end{equation*}
  We conclude that in dimension $d=1$ we can write
  \begin{equation}
    \label{eq:Q from incomplete gammas}
    Q(\eta) = \frac{b}{\Gamma(1-\alpha)} B_{1-\eta^{1/b}}\Big(1-\alpha, b\Big) = \frac{b \Gamma(b)}{\Gamma(1-\alpha + b)} I_{1-\eta^{1/b}}\Big(1-\alpha, b\Big).
  \end{equation}
\end{remark}

\paragraph{Asymptotic of $Q$ and $\eta = 0, 1$.}
Lastly, we point out the limits
For all dimensions $d \ge 1$, the singularity is dominated by the fractional boundary behavior, yielding the optimal constant
\[
  \lim_{\eta \to 1^-} \frac{Q_{\alpha,m}(\eta)}{(1-\eta)^{1-\alpha}} = \frac{b^\alpha}{\Gamma(2-\alpha)}.
\]
The singular behaviour at $\eta = 0$ is given by the power of the reference profile
\begin{equation}
  \label{eq:Q at 0}
  \lim_{\eta \to 0^+} \frac{Q_{\alpha,m}(\eta)}{\mathcal V_{\alpha,m}(\eta)^m} =
  \begin{dcases}
    \frac{1}{(d-2)\Gamma(1-\alpha)} & \text{if } d \ge 3 , \\
    \frac{1}{\Gamma(1-\alpha)} & \text{if } d =2 , \\
    \frac{\Gamma(b+1)}{\Gamma(b+1-\alpha)} & \text{if } d=1.
  \end{dcases}
\end{equation}

\subsection{Integro-differential version. Monotonicity and equi-continuity}
Let us move to the study of the properties of the solution to \eqref{eq:profile equation}. If we differentiate with respect to $z$ we get the equivalent formulation
\begin{equation}
  \label{intdiff. equation}
  (U(z)^{m})' = \int_z^{+\infty} K_z(z,\rho) \, U(\rho) d\rho,
\end{equation}
Recalling the above formulas, we get
$$
K_z(z,\rho)= \rho \, Q'(\eta)\frac1{\rho}= Q'(\eta), \quad \text{ where } \eta := \frac{z}{\rho}.
$$
Since $Q' < 0$ we deduce that
\begin{corollary}
  Any $U \in C((0,\infty))$ solution of \eqref{eq:profile equation} is strictly decreasing in its support.
\end{corollary}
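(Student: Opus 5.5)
The corollary follows from the integro-differential form \eqref{intdiff. equation} together with the sign $Q'<0$ on $(0,1)$. The argument has three steps: justify differentiation of \eqref{eq:profile equation}, check the sign of the resulting integral, and translate that sign from $U^m$ to $U$.

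\textbf{Step 1: differentiating the integral equation.} Let $U\in C((0,\infty))$ be a nonnegative solution of \eqref{eq:profile equation}, with the integral finite for every $z>0$. Fix $0<z_1<z_2$ and subtract the equations at $z_2$ and $z_1$:
\begin{equation*}
  U(z_2)^m-U(z_1)^m=\int_{z_2}^{\infty}\big(K(z_2,\rho)-K(z_1,\rho)\big)U(\rho)\,d\rho-\int_{z_1}^{z_2}K(z_1,\rho)U(\rho)\,d\rho .
\end{equation*}
Writing the difference of kernels as an integral of $K_z$ and applying Tonelli (every integrand has a fixed sign) gives
\begin{equation*}
  U(z_2)^m-U(z_1)^m=\int_{z_1}^{z_2}\int_{\zeta}^{\infty}Q'(\zeta/\rho)\,U(\rho)\,d\rho\,d\zeta .
\end{equation*}
Here we use $K(\zeta,\zeta)=0$, so that the boundary term produced by the moving lower limit vanishes. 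This is exactly the integrated form of \eqref{intdiff. equation}.

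The inner integral converges. By \eqref{eq:Q'} the only singularity of $Q'(\zeta/\rho)$ sits at $\rho=\zeta$ and behaves like $(1-\zeta/\rho)^{-\alpha}$, which is integrable. For large $\rho$, $|Q'(\zeta/\rho)|\lesssim(\zeta/\rho)^{1-d}$, so the tail is controlled by $\int U(\rho)\rho^{d-1}\,d\rho<\infty$. Alternatively, finiteness of the inner integral for a.e. $\zeta$ already follows from Tonelli, since the left-hand side is finite.

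\textbf{Step 2: sign and strictness.} By \eqref{eq:Q'}, $Q'(\eta)<0$ for every $\eta\in(0,1)$. Hence $U(z_2)^m\le U(z_1)^m$, and $U$ is non-increasing on $(0,\infty)$. This also gives the conclusion of \Cref{rem:decreasing}.

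For strictness, take $z_1<z_2$ with $z_1$ in the support of $U$. If $U(z_2)>0$, continuity gives $U>0$ on a neighbourhood of $z_2$. For every $\zeta\in(z_1,z_2)$ the inner integral then includes a set of positive measure on which $U>0$ and $Q'<0$. So the double integral is strictly negative, and $U(z_2)^m<U(z_1)^m$.

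If instead $U(z_2)=0$ while $U(z_1)>0$, strict inequality is immediate. In that case the support is an interval $[0,R]$ and, by monotonicity, $U\equiv0$ beyond it.

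\textbf{Step 3: from $U^m$ to $U$.} Since $s\mapsto s^m$ is strictly increasing on $[0,\infty)$, strict decrease of $U^m$ on the support gives strict decrease of $U$ there.

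The only delicate point is Step 1, that is, justifying the exchange of integrals. No dominated-convergence argument is needed because all integrands have a fixed sign, so Tonelli applies directly. What remains is to check that the representation is finite, which follows from the assumption that \eqref{eq:profile equation} holds pointwise with a finite right-hand side.
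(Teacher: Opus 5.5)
Your proposal is correct and follows the paper's argument. The paper differentiates \eqref{eq:profile equation} to obtain \eqref{intdiff. equation}, notes that $K_z(z,\rho)=Q'(z/\rho)<0$ by \eqref{eq:Q'}, and concludes; your integrated form via Tonelli (using $K(\zeta,\zeta)=0$), your positive-measure argument for strictness, and your explicit assumption $U\ge 0$ (which the paper leaves implicit) are simply a more careful justification of that same step.
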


Since $U$ is monotone, we can apply Lieb's trick
\begin{equation*}
  M = |\partial B_1| \int_0^z U(\rho) \rho^{d-1} d \rho  \ge U(z) |\partial B_1| \frac{z^d}{d}.
\end{equation*}
Hence, we recover the universal local $L^\infty$ bound depending only on the mass
\begin{equation}
  \label{eq:Lieb upper bound}
  U_{\alpha,m,M} (z) \le z^{-d} \frac{dM}{|\partial B_1|} \qquad \forall z > 0.
\end{equation}
From this estimate we also deduce the equi-continuity.
We point out that
\begin{equation*}
  \left| z^{d-1} \frac{dU^m}{dz} (z) \right| = \int_z^\infty (-Q')(\tfrac{z}{\rho}) \frac{z^{d-1}}{\rho^{d-1}} \rho^{d-1} U(\rho) d \rho \le \frac{1}{\Gamma(1-\alpha)}\int_{z}^\infty U(\rho)\rho^{d-1}d\rho .
\end{equation*}
Therefore,
\begin{equation}
  \label{eq:equicontinuity}
  \left| z^{d-1} \frac{dU_{\alpha,m,M}^m}{dz} (z) \right| \le \frac{M}{|\partial B_1| \Gamma(1-\alpha)}.
\end{equation}

\subsection{Analysis of the singularity of $(U^m)'$ at $z = 0$. Proof of \Cref{eq:limit of flux at z=0}}

The aim of this is to justify \eqref{eq:limit of flux at z=0}.
First, we provide a formal argument through the PDE.
Then, we obtain a direct proof for solutions of \eqref{eq:profile equation} using the properties of $Q$.

\paragraph{Formal estimate through the PDE.}
Let $u$ be given by \eqref{eq:ssf}. If we consider $\varphi(t) = t^{\alpha - 1}$ and multiply the equation by $\varphi(T-t)$
\begin{align*}
  \int_\tau^T \int_{\Rd \setminus B_\varepsilon} (T-t)^{\alpha - 1} I_{1-\alpha} \partial_t u(t,x) dx dt = \int_0^T (T-t)^{\alpha - 1} \int_{\Rd \setminus B_\varepsilon} \Delta u^{m} dx dt.
\end{align*}
Integrating by parts, we recover that
\begin{align*}
  \int_\tau^T  (I_{1-\alpha}\varphi)(T-t)  \partial_t \int_{\Rd \setminus B_\varepsilon} u(t,x) dx dt
  & = \int_\tau^T (T-t)^{\alpha - 1} \int_{\partial B_\varepsilon} \nabla u^{m} \cdot \frac{-x}{|x|}.
\end{align*}
By scaling the integrals and using the properties of Euler's beta function we deduce that $I_{1-\alpha} \varphi = \Gamma(\alpha)$.
Therefore, we observe that
\begin{align*}
  \Gamma(\alpha)
  & \left( \int_{\Rd \setminus B_\varepsilon} u(t, x)dx - \int_{\Rd \setminus B_\varepsilon} u(\tau,x)dx  \right)
  \\
  & = -\int_\tau^T (T-t)^{\alpha - 1} |\partial B_1| \varepsilon^{d-1} t^{-bd(m+1)}\frac{dU^{m}}{dz}(t^{-b}\varepsilon) t^{-b} dt
  \\ &\approx \left(-|\partial B_1| z^{d-1} \frac{dU^{m}}{dz} \right)\Bigg|_{z = 0^+} \int_\tau^T (T-t)^{\alpha - 1} t^{-\alpha} dt.
\end{align*}
We can now let $\tau \to 0$, where $u(\tau,x) = 0$ in $\Rd \setminus B_\varepsilon$ for $\tau$ small enough, and we can lastly let $\varepsilon \to 0$ to recover \eqref{eq:limit of flux at z=0}.

\paragraph{Through the self-similar equation.}
We start with the following result about the behavior of the derivative at the origin.
\begin{lemma}
  \label{lem:limit of flux holds}
  Let $U \in C((0,+\infty))$ be a finite mass solution.
  Then, $z^{d-1} (U^m)' \in C([0,+\infty))$ and \eqref{eq:limit of flux at z=0}.
\end{lemma}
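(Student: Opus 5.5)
Differentiating \eqref{eq:profile equation} in $z$ gives the integro-differential form \eqref{intdiff. equation}. Multiplying by $z^{d-1}$ and using \eqref{eq:Q'}, the flux function is expressed entirely through $U$:
\begin{equation*}
  G(z) \coloneqq -z^{d-1}\frac{dU^m}{dz}(z) = \frac{1}{\Gamma(1-\alpha)} \int_z^{\infty} \Bigl(1-\bigl(\tfrac{z}{\rho}\bigr)^{\frac1b}\Bigr)^{-\alpha} \rho^{d-1} U(\rho)\, d\rho .
\end{equation*}
The plan is to work directly with this representation. Before that, I will justify the differentiation under the integral sign. Since $K(z,z)=0$, no boundary term appears. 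The derivative $K_z = Q'(z/\rho)$ has the integrable singularity $(1-\eta^{1/b})^{-\alpha}$ near $\rho = z$, because $\alpha<1$. For $\rho\ge 2z$ it is bounded by $C(z/\rho)^{1-d}$, so the integrand is controlled by $C z^{1-d}\rho^{d-1}U(\rho)$, which is integrable by the finite-mass assumption $z^{d-1}U\in L^1$. Local boundedness of $U$ on $(0,\infty)$, which holds by continuity or by \eqref{eq:Lieb upper bound}, handles the region near $\rho = z$. Dominated convergence then gives that $U^m$ is $C^1$ on $(0,\infty)$ and that $G$ is given by the formula above.

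Next I prove continuity of $G$ on $(0,\infty)$. Substituting $\rho = z/\eta$, or simply splitting the integral into $[z,2z]$ and $[2z,\infty)$, continuity follows from continuity of $U$ together with dominated convergence. The part near the diagonal needs care because of the moving singularity; I will handle it by the change of variables $\rho = z s$ with $s\in[1,2]$, which fixes the singular weight $(1-s^{-1/b})^{-\alpha}$ and moves the $z$-dependence into the continuous factor $U(zs)(zs)^{d-1}z$.

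The main step is the limit at $z=0$. For each fixed $\rho>0$, the integrand satisfies
\begin{equation*}
  \bigl(1-(z/\rho)^{1/b}\bigr)^{-\alpha}\chi_{\rho>z} \to 1 \qquad \text{as } z\to 0 .
\end{equation*}
The difficulty is that this weight is not uniformly bounded: it blows up as $\rho\downarrow z$. So monotone or dominated convergence cannot be applied directly near the diagonal, and this is the main obstacle.

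I will split the integral at $\rho = 2z$. On $\rho\ge 2z$ the weight is at most $(1-2^{-1/b})^{-\alpha}$ and tends to $1$ pointwise. Dominated convergence then gives convergence to $\int_0^\infty \rho^{d-1}U\,d\rho = M/|\partial B_1|$, using \eqref{eq:profile mass}. On $[z,2z]$, I substitute $\rho = zs$ and use the Lieb bound \eqref{eq:Lieb upper bound}, $U(zs)\le C (zs)^{-d}$. This gives
\begin{equation*}
  \int_z^{2z} \le C\int_1^2 \bigl(1-s^{-1/b}\bigr)^{-\alpha} s^{-1} \, ds,
\end{equation*}
which is bounded but does not obviously vanish. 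To get a vanishing bound I will instead use that the weight is bounded by $C(1-s^{-1/b})^{-\alpha}$ and that
\begin{equation*}
  \int_z^{2z}\rho^{d-1}U(\rho)\,d\rho \to 0
\end{equation*}
by absolute continuity of the integral. Concretely, Hölder's inequality will not apply directly without an $L^p$ bound, so I use monotonicity of $U$ (from the Corollary) instead: $U(\rho)\le U(z)$ on $[z,2z]$. Then the near-diagonal part is at most
\begin{equation*}
  C\, U(z)\, z^d\int_1^2 \bigl(1-s^{-1/b}\bigr)^{-\alpha}s^{d-1}\,ds \le C' z^d U(z).
\end{equation*}
Finally, $z^dU(z)\le d\int_0^z \rho^{d-1}U\,d\rho\to 0$, by the same Lieb-type argument restricted to $[0,z]$.

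Combining the two parts gives $G(0^+) = M/(|\partial B_1|\Gamma(1-\alpha))$, which is \eqref{eq:limit of flux at z=0}. This also shows that $z^{d-1}(U^m)'$ extends continuously to $[0,\infty)$.
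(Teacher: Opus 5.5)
Your proof is correct, but you organize the limit $z\to0^+$ differently from the paper. Write $w(\eta)=(1-\eta^{1/b})^{-\alpha}/\Gamma(1-\alpha)$ for the weight.

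\emph{The paper's route.} It subtracts the limiting weight. The flux becomes $w(0^+)\int_z^\infty\rho^{d-1}U\,d\rho$ plus a remainder. After the substitution $\eta=z/\rho$, the remainder equals $\int_0^1\Phi(\eta)F(z/\eta)\,d\eta$, where $\Phi(\eta)=(w(\eta)-w(0^+))/\eta$ and $F(\rho)=\rho^dU(\rho)$. This remainder vanishes by a single dominated-convergence argument on the fixed interval $(0,1)$. That argument needs $\Phi\in L^1(0,1)$, which must be checked at both endpoints; at $\eta=0$ it uses $w(\eta)-w(0^+)=O(\eta^{1/b})$ with $b>0$. It also needs the global bound $F\le CM$ together with $F(0^+)=0$.

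\emph{Your route.} You split at $\rho=2z$ instead.
\begin{itemize}
\item On the far part you only need the weight to be bounded and to tend to $1$, with majorant $\rho^{d-1}U$.
\item On the near-diagonal part you bound the integral explicitly by $Cz^dU(z)$, using monotonicity on $[z,2z]$ alone.
\end{itemize}
This avoids analysing $\Phi$ near $\eta=0$, at the price of a two-piece argument. The decisive input, $z^dU(z)\le d\int_0^z\rho^{d-1}U\,d\rho\to0$, is the same in both proofs. Your rescaling $\rho=zs$ also gives a genuine argument for continuity on $(0,\infty)$. The paper settles this in one line, even though the weight is unbounded at $\eta=1$.

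One caveat concerns your first step. Differentiating under the integral sign by dominated convergence is not quite justified as you state it. The singularity of $K_z$ moves with $z$, so there is no $z$-uniform integrable majorant near the diagonal. The clean route is to write $K(z,\rho)=\int_z^\rho(-Q')(s/\rho)\,ds$, using $Q(1)=0$, and then apply Tonelli. This gives $U(z)^m=\int_z^\infty s^{1-d}G(s)\,ds$, with $G$ your integral expression. The $C^1$ regularity of $U^m$ and the formula for $(U^m)'$ then follow from the continuity of $G$ that you prove in the next step.
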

\begin{proof}
  Let us denote, for this proof, $G(\eta) = \eta^{d-1} Q'(\eta) = \frac{(1-\eta^{\frac 1 b})^{-\alpha}}{\Gamma(1-\alpha)}$ which is non-decreasing in $\eta$.
  For $z > 0$ we write
  \[
    z^{d-1} (U^m)' =
    \int_z^\infty G(\tfrac z\rho) \rho^{d-1} U(\rho) d\rho
  \]
  since $G$ is continuous and $U(\rho) \rho^{d-1}$ is continuous and integrable, we recover the continuity of $z^{d-1} (U^m)'$.
  In order to pass to the limit, we write
  \begin{equation}
    \label{eq:limit of flux holds decomposition}
    \int_z^\infty G(\tfrac z\rho) \rho^{d-1} U(\rho) d\rho =
    G(0^+) \int_{z}^{+\infty} U(\rho) \rho^{d-1} d \rho + \int_z^\infty (G(\tfrac{z}{\rho}) - G(0^+)) U(\rho) \rho^{d-1}d\rho.
  \end{equation}
  The first term on the right-hand side has a limit because $U$ is non-negative.
  To show that the second term vanishes, we perform the change of variables $\eta = z/\rho$, which yields $d\rho = -z/\eta^2 d\eta$:
  \begin{align*}
    \int_z^\infty (G(\tfrac{z}{\rho}) - G(0^+)) U(\rho) \rho^{d-1}d\rho
    &= \int_0^1 (G(\eta) - G(0^+)) U(z/\eta) (z/\eta)^{d-1} \frac{z}{\eta^2} d\eta \\
    &= \int_0^1 \frac{G(\eta) - G(0^+)}{\eta} \left[ U(z/\eta) (z/\eta)^d \right] d\eta.
  \end{align*}
  Let us define $\Phi(\eta) = \frac{G(\eta) - G(0^+)}{\eta}$ and $F(\rho) = U(\rho)\rho^d$.

  We first analyze the integrability of $\Phi(\eta)$ on $(0,1)$. As $\eta \to 1^-$, we have $\Phi(\eta) \sim (1-\eta)^{-\alpha}$, which is locally integrable since $\alpha \in (0,1)$. As $\eta \to 0^+$, a Taylor expansion yields $G(\eta) - G(0^+) \sim \eta^{1/b}$, and thus $\Phi(\eta) \sim \eta^{1/b - 1}$. Because we are in the range where $b > 0$, this singularity is also locally integrable. Hence, $\Phi \in L^1(0,1)$.

  Next, we consider $F(\rho)$. Because $U$ is non-increasing and $U(\rho)\rho^{d-1} \in L^1(0,\infty)$, we can bound $F(\rho)$ by observing that
  \begin{equation*}
    F(\rho) = U(\rho) \rho^d \le d \int_0^\rho U(s) s^{d-1} ds.
  \end{equation*}
  We conclude that $F(\rho)$ is globally bounded, $F(\rho) \le C(d) M$, and that $\lim_{\rho \to 0^+} F(\rho) = 0$.

  Therefore, for any fixed $\eta \in (0,1)$, the integrand satisfies $\Phi(\eta) F(z/\eta) \to 0$ pointwise as $z \to 0^+$. Furthermore, the integrand is uniformly bounded by $M \Phi(\eta) \in L^1(0,1)$. By the dominated convergence theorem, the second term in the right-hand side of \eqref{eq:limit of flux holds decomposition}  vanishes in the limit $z \to 0^+$.
\end{proof}
%
%
%
%
%
%
%
%

%

\subsection{Analysis of the singularity of $U$ at $z = 0$. Proof of \Cref{eq:profile at z=0}}
\label{sec:U at z=0}
We now use \eqref{eq:Q at 0} to deduce \eqref{eq:profile at z=0}, similarly to \Cref{lem:limit of flux holds}. Since the argument is analogous, we will not write a detailed proof.
\paragraph{Case $d \ge 3$.} We have that
\begin{align*}
  \frac{U_{\alpha,m,M}(z)^m}{\mathcal V_{\alpha,m}(z)^m} &= \frac{ U (z)^m }{z^{2-d}} = \int_{z}^\infty (\tfrac{z}{\rho} )^{d-2}Q(\tfrac{z}{\rho})  U(\rho) \rho^{d-1} d \rho
  \\
  &\to \frac{M}{|\partial B_1| (d-2)\Gamma(1-\alpha)} .
\end{align*}
As $\alpha \to 1^-$ this limit vanishes because $U_{1,m,M}(z)$ is bounded at $z = 0$.

\paragraph{Case $d =2$.} We have that
\begin{align*}
  \frac{U_{\alpha,m,M}(z)^m}{\mathcal V_{\alpha,m}(z)^m} &= \frac{ U (z)^m }{-\log z}
  = \int_{z}^\infty \frac{1}{-\log \frac z \rho - \log \rho} Q(\tfrac{z}{\rho})  U(\rho) \rho^{d-1} d \rho
  \\
  &\to \frac{M}{|\partial B_1| \Gamma(1-\alpha)} .
\end{align*}
As $\alpha \to 1^-$ this limit vanishes because $U_{1,m,M}(z)$ is bounded at $z = 0$.

\paragraph{Case $d = 1$.} We have that
\begin{equation*}
  \frac{U_{\alpha,m,M}(z)^m}{\mathcal V_{\alpha,m}(z)^m} = U (z)^m = \int_{z}^\infty Q(\tfrac{z}{\rho}) U(\rho) d \rho \to \frac{M}{2}\frac{\Gamma(b+1)}{\Gamma(b+1-\alpha)}.
\end{equation*}
\qed

\subsection{Kernel estimates}

We focus on the representation using \eqref{eq:Q}.
We begin this section with some notation
For $f, g: (a,b) \to [0,+\infty)$, introduce the notation
\[
  f \asympint{a}{b} g,
\]
if there exists $c(a,b)$, $C(a,b) > 0$ such that
\begin{equation*}
  c(a,b) g(z) \le f(z) \le C(a,b) g(z) \qquad \forall z \in (a,b).
\end{equation*}
We will set $\asymp$ to mean $\asympint{0}{+\infty}$.

\bigskip

For $\eta \sim 1^-$ (specifically for $\eta \in [1/2, 1)$), we have
\begin{equation*}
  \sigma^{1 + \frac1 b - d} \asympint{\frac 1 2}{1} 1 \qquad \text{and} \qquad 1-\sigma^{\frac 1 b} \asympint{\frac 1 2}{1} 1-\sigma.
\end{equation*}
Furthermore, because $\sigma \in (\eta, 1)$, as $\eta \to 1^-$ we also have $\sigma \to 1^-$, which implies $\eta\sigma^{-1} \to 1$.
Hence, we can estimate $Q(\eta)$ by focusing on the remaining singular term. Using the change of variable $(1-\eta)\xi = 1-\sigma$ (which gives $d\sigma = -(1-\eta)d\xi$), we recover:
\begin{equation*}
  \begin{aligned}
    Q(\eta) & \asympint{\frac{1}{2}}{1} \int_\eta^1 (1-\sigma)^{-\alpha} \, d\sigma
    \asympint{\frac{1}{2}}{1} \int_0^1 \big((1-\eta)\xi\big)^{-\alpha} (1-\eta) \, d\xi
    \asympint{\frac{1}{2}}{1} (1-\eta)^{1-\alpha} \int_0^1 \xi^{-\alpha} \, d\xi.
  \end{aligned}
\end{equation*}
We deduce that
\begin{equation}
  \label{eq:Q away from 0}
  Q(\eta) \asympint{\frac{1}{2}}{1} (1 - \eta)^{1-\alpha}
\end{equation}
We now estimate the behavior of $Q(0^+)$. When $0 < \eta < \frac 1 2$ then we write
\begin{align*}
  Q(\eta) & = \frac{1}{\Gamma(1-\alpha)}\left( \int_\eta^{\frac 1 2} (1-\sigma^{\frac 1 b})^{-\alpha} \sigma^{1-d} d\sigma +  \int_{\frac 1 2}^1 (1-\sigma^{\frac 1 b})^{-\alpha} \sigma^{1-d} d\sigma \right)
  \asympint{0}{\frac 1 2} \int_\eta^1 \sigma^{1-d} d \sigma + 1.
\end{align*}
Hence, we deduce that
\begin{equation}
  \label{eq:Q close to 0}
  Q(\eta) \asympint{0}{\frac 1 2}
  \begin{dcases}
    1          & \text{if } d = 1,   \\
    -\log \eta & \text{if } d = 2,   \\
    \eta^{2-d} & \text{if } d \ge 3.
  \end{dcases}
  \qquad
  .
\end{equation}
Hence, if $d \ge 2$ we have $K(0^+, \rho) = +\infty$ for each $\rho > 0$. This means that any non-trivial solution satisfies $U(0^+) = +\infty$.

\subsection{Limits of the kernel as $\alpha \to 1^-$}
Notice that
\begin{equation*}
  F_\alpha (z) = \int_0^1 \sigma^{-\alpha} U(\sigma^{-b} z) d \mu_\alpha(1-\sigma), \qquad d\mu_\alpha (z) = \frac{t^{-\alpha}}{\Gamma(1-\alpha)} dz.
\end{equation*}
It is easy to see that $\mu_\alpha \to \delta_0$ and hence
\begin{equation*}
  F_\alpha (z) \to U(z) \qquad \text{ as } \alpha \to 1^-.
\end{equation*}
Going back to \eqref{eq:implicit equation last formula with F} we can expect that
\begin{equation}
  \label{eq:K limit as alpha to 1}
  K_{\alpha,m}(z,\rho) \to K_{1,m} (\rho) \coloneqq b \rho \text{ as } \alpha \to 1 \text{ for each } 0 < z \le \rho.
\end{equation}
We prove this rigorously.
\begin{lemma}
  \label{eq:Q limit alpha -> 1}
  Assume $Q(\eta)$ vanishes for $\eta \ge 1$, and for $\eta \in (0,1)$ its derivative is given according to \eqref{eq:Q'}.
  Then, we have that
  \[
    \lim_{\alpha \to 1^-} Q_{\alpha,m}(\eta) = Q_{1,m} \coloneqq b_{1,m}  \qquad \text{ for all } \eta \in (0,1).
  \]
\end{lemma}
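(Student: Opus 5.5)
We work directly from the representation $Q(\eta) = \int_\eta^1 (-Q'(\sigma))\,d\sigma$, which is valid because $Q(1)=0$. By \eqref{eq:Q'},
\[
  Q_{\alpha,m}(\eta) = \frac{1}{\Gamma(1-\alpha)}\int_\eta^1 (1-\sigma^{1/b})^{-\alpha}\sigma^{1-d}\,d\sigma ,
\]
where $b=b_{\alpha,m}=\alpha/(2+d(m-1))$ depends continuously on $\alpha$ and satisfies $b_{\alpha,m}\to b_{1,m}>0$. The key observation is that the measure $\frac{(1-\tau)^{-\alpha}}{\Gamma(1-\alpha)}d\tau$ on $(0,1)$ has total mass $\frac{1}{\Gamma(2-\alpha)}\to 1$. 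Moreover, its mass concentrates at $\tau=1$ as $\alpha\to1^-$: for any $\delta>0$, the mass on $(0,1-\delta)$ is at most $\delta^{-\alpha}/\Gamma(1-\alpha)\to0$. The limit will therefore be governed by the behaviour of the remaining factor near $\sigma=1$.

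First, we change variables $\tau=\sigma^{1/b}$, so $\sigma=\tau^b$ and $d\sigma = b\tau^{b-1}d\tau$. This turns the integral into
\[
  Q_{\alpha,m}(\eta)=\frac{b}{\Gamma(1-\alpha)}\int_{\eta^{1/b}}^1 (1-\tau)^{-\alpha}\,\tau^{b(2-d)-1}\,d\tau .
\]
This is the same computation as in the remark leading to \eqref{eq:Q from incomplete gammas}. Fix $\eta\in(0,1)$. For $\alpha$ close to $1$, $b$ lies in a compact subset of $(0,\infty)$, so $\eta^{1/b}\le \eta_0<1$ uniformly. The weight $g_\alpha(\tau)=\tau^{b(2-d)-1}$ is then continuous on $[\eta_0,1]$, bounded there uniformly in $\alpha$, and satisfies $g_\alpha(1)=1$. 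Moreover, $g_\alpha\to g_1$ uniformly on $[\eta_0,1]$.

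Second, we split the integral at $1-\delta$. On $[1-\delta,1]$ we write $g_\alpha(\tau)=1+(g_\alpha(\tau)-1)$. The constant term contributes
\[
  \frac{\delta^{1-\alpha}}{\Gamma(2-\alpha)}\to 1 .
\]
The correction term is bounded by $\sup_{[1-\delta,1]}|g_\alpha-1|\cdot\frac{1}{\Gamma(2-\alpha)}$, which is small uniformly in $\alpha$ once $\delta$ is small, by equicontinuity of $g_\alpha$ near $1$. On $[\eta^{1/b},1-\delta]$, the contribution is at most $\|g_\alpha\|_\infty\,\delta^{-\alpha}/\Gamma(1-\alpha)$, which tends to $0$ as $\alpha\to1^-$ because $1/\Gamma(1-\alpha)\to0$. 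Taking $\limsup$ and $\liminf$ in $\alpha$ and then letting $\delta\to0$ gives $Q_{\alpha,m}(\eta)\to b_{1,m}\cdot 1 = b_{1,m}$.

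The only delicate step is the uniformity in $\alpha$ of the continuity of $g_\alpha$ at $\tau=1$ and of the lower endpoint $\eta^{1/b}$, since both $b$ and the integrand vary with $\alpha$. This is handled by restricting to $\alpha\in(\alpha_0,1)$ for some $\alpha_0>0$, where $b$ stays in a compact interval of $(0,\infty)$. In particular, no issue arises from the singular factor $\sigma^{1-d}$, since $\eta>0$ is fixed. Alternatively, for $d=1$ one could read the limit off \eqref{eq:Q from incomplete gammas} directly using $I_x(1-\alpha,b)\to 1$, but the splitting argument covers all $d$ at once.
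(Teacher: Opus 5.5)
Your proof is correct and follows the paper's argument. Your substitution $\tau=\sigma^{1/b}$ is the paper's $s=1-\sigma^{1/b}$ up to reflection, and in both the kernel $(1-\tau)^{-\alpha}/\Gamma(1-\alpha)$, of mass $1/\Gamma(2-\alpha)\to 1$, concentrates at $\tau=1$ and picks out $b\,g_\alpha(1)=b\to b_{1,m}$; the paper bounds the remainder with one uniform Lipschitz estimate over the whole range instead of your split at $1-\delta$. One small precision: bounding $g_\alpha$ uniformly on $[\eta^{1/b},1]$ requires a uniform \emph{lower} bound $\eta^{1/b}\ge \eta_1>0$, not the upper bound $\eta^{1/b}\le\eta_0$ you wrote, although both follow from $b$ staying in a compact subset of $(0,\infty)$.
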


\begin{proof}
  For this proof we will denote $b_\alpha = b_{\alpha,m}$. Using the change of variables $s = 1 - \sigma^{\frac{1}{b_\alpha}}$, which implies $\sigma = (1-s)^{b_\alpha}$ and $d\sigma = -b_\alpha(1-s)^{b_\alpha-1} ds$, the integration limits change from $[\eta, 1]$ to $[1 - \eta^{1/b_\alpha}, 0]$. We then have
  \begin{align*}
    Q_{\alpha,m}(\eta) & = \int_0^{1-\eta^{1/b_\alpha}} \frac{s^{-\alpha}}{\Gamma(1-\alpha)} (1-s)^{b_\alpha(1-d)} b_\alpha(1-s)^{b_\alpha-1} ds \\
    & = \int_0^{1-\eta^{1/b_\alpha}} \frac{s^{-\alpha}}{\Gamma(1-\alpha)} \Psi_\alpha(s) ds 
    \qquad \text{ where } \Psi_\alpha(s) = b_\alpha (1-s)^{b_\alpha(2-d)-1}.
  \end{align*}
  We decompose the integral into two parts:
  \begin{equation*}
    \left| Q_{\alpha,m} (\eta) - \Psi_\alpha(0) \int_0^{1-\eta^{1/b_\alpha}} \frac{s^{-\alpha}}{\Gamma(1-\alpha)} ds \right|
    \le
    \int_0^{1-\eta^{1/b_\alpha}}
    \frac{s^{-\alpha}}{\Gamma(1-\alpha)} \left| \Psi_\alpha(s) - \Psi_\alpha(0) \right|
    ds.
  \end{equation*}
  Observing that $\Psi_\alpha(0) = b_{\alpha,m}$, the first integral evaluates exactly to:
  \begin{align*}
    \Psi_\alpha(0) \int_0^{1-\eta^{1/b_\alpha}} \frac{s^{-\alpha}}{\Gamma(1-\alpha)} ds
    &=
    b_{\alpha,m} \frac{(1-\eta^{1/b_\alpha})^{1-\alpha}}{(1-\alpha)\Gamma(1-\alpha)}
    =
    b_{\alpha,m} \frac{(1-\eta^{1/b_\alpha})^{1-\alpha}}{\Gamma(2-\alpha)}.
  \end{align*}
  As $\alpha \to 1^-$, we have $b_{\alpha,m} \to b_{1,m}$, the exponent $(1-\alpha) \to 0^+$, and $\Gamma(2-\alpha) \to \Gamma(1) = 1$. Since $\eta \in (0,1)$, the base $(1-\eta^{1/b_\alpha})$ is strictly positive, meaning $(1-\eta^{1/b_\alpha})^{1-\alpha} \to 1$. Thus, this leading term converges exactly to $b_{1,m}$.

  For the second term, we point out that $\Psi_\alpha$ is smooth near $s=0$. Because $\eta \in (0,1)$, the upper integration limit is uniformly bounded away from $1$: there exists an $\varepsilon > 0$ such that $1-\eta^{1/b_\alpha} \le 1 - \varepsilon$ for all $\alpha$ close to $1$. On the interval $[0, 1-\varepsilon]$, $\Psi_\alpha$ is Lipschitz continuous uniformly in $\alpha$. Thus, there exists a constant $C > 0$ independent of $\alpha$ such that $|\Psi_\alpha(s) - \Psi_\alpha(0)| \le C s$.
  Therefore, we can bound the remainder by:
  \begin{align*}
    \int_0^{1-\varepsilon} \frac{s^{-\alpha}}{\Gamma(1-\alpha)} \left| \Psi_\alpha(s) - \Psi_\alpha(0) \right| ds
    & \le
    \frac{C}{\Gamma(1-\alpha)} \int_0^{1-\varepsilon} s^{1-\alpha} ds
    \le \frac{C}{\Gamma(1-\alpha)(2-\alpha)}.
  \end{align*}
  As $\alpha \to 1^-$, we have $\Gamma(1-\alpha) \to \infty$. This causes the upper bound to vanish. Combining both parts, we conclude that $\lim_{\alpha \to 1^-} Q_{\alpha,m}(\eta) = b_{1,m}$.
\end{proof}

\begin{remark}
  Notice that the convergence is not uniform in $[0,1]$ since $Q_{\alpha,m}(1) = 0$ for all $\alpha < 1$.
  A similar argument shows that $-Q'_{\alpha,m} \to b_{1,m} \delta_1$ in $\mathcal D'(0,1)$.
\end{remark}

\section{Linear diffusion $m = 1$}
\label{sec:m=1}

\subsection{Proof of \Cref{thm:existence} for $m = 1$}
\label{sec:existence proof m=1}
\paragraph{Solution by Fourier transform.}
The existence of a profile is well-known, see \cite{MR1061448, MR2047909,MR3631303}. In this case, we can take the Fourier transform of \eqref{eq:main} to deduce that
\begin{equation*}
  \partial_t^\alpha \widehat u(t, \xi) = -|\xi|^2 \widehat u(t,\xi).
\end{equation*}
Hence, the solution with general initial datum $u(0,x) = u_0(x)$ can be recovered by separation of variables $\widehat u(t,\xi) = \widehat Z(t,\xi) \widehat u_0(\xi)$
where for each $\xi$ fixed $\widehat Z(t,\xi)$ is the solution to
\[
  \begin{dcases}
    \partial_t^\alpha \widehat Z(t,\xi) = -|\xi|^2 \widehat Z(t,\xi), & \text{if } t > 0 \\
    \widehat Z(0,\xi)=1.
  \end{dcases}
\]
We have precisely the following
\begin{equation*}
  \widehat Z(t, \xi) = E_\alpha(-|\xi|^2t^\alpha), \qquad E_\alpha(s) = \sum_{k=0}^\infty \frac{s^k}{\Gamma(\alpha k +1)}.
\end{equation*}
where $E_\alpha$ is the Mittag-Leffler function.
And this yields the following
\[
  u(t,x) = \int_{\Rd} Z(t,x-y) u_0(y) dy.
\]
The function $Z$ is the so-called \emph{fundamental solution}.

\paragraph{Reference profile.}
$Z$ is a classical solution for $x \ne 0$ and $t > 0$. We take $\mathcal U_{\alpha,1}(z) = Z(1,z e_1)$ and this is a solution to \eqref{eq:profile equation}.
This already has mass $1$. And in \eqref{eq:rescaling profile to correct mass} we get $A = M$ and
\begin{equation*}
  U_{\alpha,m,M} (z) = M \mathcal U_{\alpha,1}(z).
\end{equation*}
We can apply \Cref{lem:sss is weak solution} and \Cref{lem:limit of flux holds} to conclude the result.
\qed

\subsection{Uniqueness. Proof of \Cref{thm:m=1 uniqueness}}
Let $u_1, u_2$ be two solutions to \eqref{eq:very weak solution}.
The difference $w = u_1 - u_2$ satisfies
\begin{equation*}
  \int_{0}^T  \int_{\Rd} w(t,x) \dRL \varphi(T-t,x)dxdt
  = \int_0^T \int_{\Rd} w(t,x) \Delta\varphi(T-t,x)  dx dt
\end{equation*}
To prove uniqueness, we use as test function in the weak formulation the solution to the following adjoint problem
\begin{equation*}
  \dRL \varphi_\varepsilon(t,x) - \Delta \varphi_\varepsilon(t,x) = f_\varepsilon(t,x) , \qquad \varphi(0,x) = 0,
\end{equation*}
for a function $f_\varepsilon$ that we construct in the following. Notice that since starting from $0$ initial datum this problem coincides with the Caputo one.

We construct $f_\varepsilon$ as follows. Fix a non-negative mollifier $\zeta \in C^\infty_c (\mathbb R^{d+1})$ with $\int_{\mathbb R^{d+1}} \zeta(t,x) dt dx = 1$. Define $\zeta_\varepsilon(t,x) = \varepsilon^{-(d+1)} \zeta( t / \varepsilon , x / \varepsilon)$.
For $0 < R_1 < R_2$ and $\varepsilon > 0$ we construct
\begin{equation*}
  f_\varepsilon(t,x) = t^2 \int_{0}^T \int_{B_{R_2} \setminus B_{R_1}} \zeta_\varepsilon(t-s,y-x) g(T-s,y) dy dx, \qquad g(t,x) =  \operatorname{sign} w(t,x)
\end{equation*}
Due to the slow growth in $t$ of $f$ at $t = 0$ we have that $\varphi_\varepsilon \in X \coloneqq C^1((0,T) \times \mathbb R^d) \cap C((0,T); C^2(\mathbb R^d))$. Since test functions in \eqref{eq:very weak solution} are required to be compactly supported, we can approximate $\varphi_\varepsilon$ in $X$  by a sequence of compactly supported functions. Hence $\partial_t^\alpha \varphi_\varepsilon$ and $\Delta \varphi_\varepsilon$ are approximated uniformly in compacts with uniform bounds $L^\infty((0,T) \times \mathbb R^d))$. Since $w \in L^1((0,T) \times \mathbb R^d)$ the weak formulation holds of $\varphi_\varepsilon$ using the dominated convergence theorem. Due to the weak formulation, we have that
\begin{equation*}
\int_0^T \int_{\mathbb R^d} w(t,x) f_\varepsilon(T-t,x) dx dt = 0.
\end{equation*}
Then letting $\varepsilon \to 0$
we deduce that
\begin{equation*}
\int_0^T \int_{B_{R_2} \setminus B_{R_1}} (T-t)^2 |w(t,x)| = 0.
\end{equation*}
Since this works for $R_1, R_2$ general, $w = 0$ and the proof is complete.

Given two profiles $U_1, U_2$ of mass $M$, we can use \Cref{lem:sss is weak solution} to deduce that the associated solutions $u_1, u_2$ are weak solutions. Therefore, we deduce $u_1 = u_2$ and therefore $U_1 = U_2$.
\qed

\section{Slow diffusion $m>1$}
In this setting, we will show that the profiles are compactly supported and therefore all the computations for the derivation of \eqref{eq:profile equation} are justified.
\subsection{Uniqueness of solutions to the profile equation}
Let
\begin{equation*}
\fixedpoint U (r) = \left( \int_{r}^\infty K(r, \rho) U(\rho) d\rho  \right)^{1/m}.
\end{equation*}
We have the following fundamental result.
\begin{theorem}[Forward uniqueness]
\label{thm:profile forward uniqueness}
Assume that $U_1 \in C((0,\infty))$ with $\text{supp}(U_1) = [0,R]$ and $U_2 \in C((0,\infty))$ are solutions of $U_i = \fixedpoint U_i$ in $(z_0, +\infty)$. If $U_1(z_0) = U_2(z_0)$ then $U_1 = U_2$ in $[z_0,+\infty)$.
\end{theorem}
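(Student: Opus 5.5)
The plan is to use three structural features of the fixed-point operator $\fixedpoint$: it is \emph{order preserving}, because $K\ge 0$; it is \emph{sub-homogeneous}, since $\fixedpoint(\lambda U)=\lambda^{1/m}\fixedpoint U$ with $1/m<1$ when $m>1$; and its kernel is \emph{strictly positive}, $K(z,\rho)>0$ for $z<\rho$. The proof splits into two parts: first show that $U_1$ and $U_2$ are ordered on $[z_0,+\infty)$, then use the equality at $z_0$ to upgrade the ordering to an identity.

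\textbf{Step 1 (from ordering to equality).} Suppose we already know $U_2\le U_1$ on $[z_0,\infty)$; the reverse ordering is handled symmetrically. Then
\[
U_2(z_0)^m=\int_{z_0}^\infty K(z_0,\rho)U_2(\rho)\,d\rho\le \int_{z_0}^\infty K(z_0,\rho)U_1(\rho)\,d\rho=U_1(z_0)^m .
\]
Equality holds by hypothesis, so $\int_{z_0}^\infty K(z_0,\rho)\,(U_1-U_2)(\rho)\,d\rho=0$. The integrand is nonnegative and $K(z_0,\rho)>0$ for $\rho>z_0$, so $U_1=U_2$ a.e. on $(z_0,\infty)$. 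Continuity then gives $U_1=U_2$ on $[z_0,\infty)$. This is the only place where $U_1(z_0)=U_2(z_0)$ is used.

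\textbf{Step 2 (ordering via a touching argument).} For $\lambda>1$, the function $\lambda U_1$ is a strict supersolution wherever it is positive: $\fixedpoint(\lambda U_1)=\lambda^{1/m}U_1<\lambda U_1$. Let $z_1$ be the largest point of $[z_0,\infty)$ where $U_2=\lambda U_1>0$, with $U_2\le\lambda U_1$ on $(z_1,\infty)$. Monotonicity of $\fixedpoint$ then gives
\[
U_2(z_1)=\fixedpoint U_2(z_1)\le \fixedpoint(\lambda U_1)(z_1)=\lambda^{1/m}U_1(z_1)<\lambda U_1(z_1),
\]
which is a contradiction. Hence, once $U_2\le\lambda U_1$ holds near the right end, it holds on all of $[z_0,\infty)$. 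Letting $\lambda\downarrow1$ yields $U_2\le U_1$.

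\textbf{Step 3 (the main obstacle).} The touching argument needs a starting region where $U_2\le\lambda U_1$. In particular it needs $U_2\equiv0$ on $(R,\infty)$, i.e.\ it needs control of where $\operatorname{supp}U_2$ ends. I would first establish the dichotomy that either $\operatorname{supp}U_2\subset[0,R]$ or $\operatorname{supp}U_1\subset\operatorname{supp}U_2$ with $U_1\le U_2$. The tool for this is sliding the scaled family $U_{1,A}(z)=AU_1(A^{-(m-1)/2}z)$ from \eqref{eq:scaling solutions}. These are also solutions, with support $[0,A^{(m-1)/2}R]$, and they are again strict supersolutions after multiplication by $\lambda>1$.

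The idea is to consider $A^*=\inf\{A:\ U_2\le \lambda U_{1,A}\text{ on }[z_0,\infty)\}$. The degenerate behaviour of $K$ near the diagonal, $K(z,\rho)\asymp\rho(1-z/\rho)^{1-\alpha}$ by \eqref{eq:Q away from 0}, forces any solution to vanish at most like $(1-z/R)^{(2-\alpha)/(m-1)}$ at the edge of its support, with a universal constant. This is what lets the contact set be compared near the free boundaries: there, $\lambda U_{1,A}$ strictly beats $U_2$. If $U_2$ turns out not to be compactly supported, I would first bound it above by $\lambda U_{1,A}$ for $A$ large using the Lieb bound \eqref{eq:Lieb upper bound}. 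It is this step, controlling the free-boundary comparison without yet knowing the support of $U_2$, that I expect to require the most care.

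The touching argument of Step 2 then rules out $A^*>1$. Hence $U_2\le\lambda U_1$ for every $\lambda>1$, so $U_2\le U_1$, and Step 1 concludes.
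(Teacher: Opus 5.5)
Your Steps 1 and 2 are sound. Step 1 is exactly how the paper exploits $U_1(z_0)=U_2(z_0)$ together with $K(z_0,\rho)>0$. The right-to-left touching argument is a legitimate alternative to the paper's global $\sup\Lambda$ argument, since $\fixedpoint U(z)$ only depends on $U$ on $[z,\infty)$.

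The gap is in Step 3, in the branch where $\operatorname{supp}U_2=[0,R_2]$ with $R_2>R$ (possibly $R_2=+\infty$). There $U_2>0=U_1$ on $(R,R_2)$, so $U_2\le\lambda U_1$ is simply false and $A^*>1$ genuinely holds. No touching argument from above can rule this out. What rules the branch out is the hypothesis $U_1(z_0)=U_2(z_0)$, and to feed it into Step 1 you need $U_1\le U_2$, i.e.\ a comparison from \emph{below}. The mechanism you propose goes the wrong way: sliding $\lambda U_{1,A}$ above $U_2$ can only produce upper bounds on $U_2$. Moreover, if $U_2$ is not compactly supported, the set defining $A^*$ is empty, because $\lambda U_{1,A}$ vanishes beyond $A^{(m-1)/2}R$. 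The Lieb bound \eqref{eq:Lieb upper bound} cannot change this: it gives decay, not compact support, and it presupposes finite mass and monotonicity on all of $(0,\infty)$, which are not hypotheses on $U_2$. The universal free-boundary asymptotics you invoke for arbitrary solutions is likewise not established and would itself be a delicate ratio estimate. Fortunately, it is not needed.

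The missing idea, which is the paper's, is to always put the function with strictly smaller support underneath. Suppose $R_2>R$. Then $U_2\ge U_2(R)>0$ on $[z_0,R]$ (solutions are non-increasing), while $U_1$ is bounded there, so $cU_1\le U_2$ for small $c>0$. \Cref{lem:comparison by homogeneity} then sets $\lambda_*=\sup\Lambda$ and uses $U_2=\fixedpoint U_2\ge\fixedpoint(\lambda_*U_1)=\lambda_*^{1/m}U_1$ to force $\lambda_*\ge 1$. Your touching argument also works here if you reverse its orientation: for $\mu<1$, $\mu U_1$ is a strict subsolution where positive, and $\mu U_1(R)=0<U_2(R)$ gives the starting region. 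Step 1 then yields $U_1=U_2$, contradicting $R_2>R$.

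Once $R_2\le R$ is known, no free-boundary analysis is required. For every $A>1$ one has $U_{1,A}(R_2)>0=U_2(R_2)$, which is the starting region for your Step 2. This gives $U_2\le\lambda U_{1,A}$ on $[A^{(m-1)/2}z_0,\infty)$, the set where $U_{1,A}$ actually solves the equation. Letting $\lambda,A\downarrow 1$ and applying Step 1 concludes. The paper does the mirror image of this last step: it rescales $U_2$ to support $[0,(1+\varepsilon)R]$ so that it is positive on $[z_0,R]$, places a small multiple of $U_1$ beneath it, and lets $\varepsilon\to 0$.
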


We start with a lemma that strongly uses the homogeneity of the operator.
\begin{lemma}
\label{lem:comparison by homogeneity}
Assume $U_1, U_2 \in C([z_0,\infty))$ are non-negative, such that $U_i = \fixedpoint U_i$ in $[z_0,+\infty)$, and the set
\[
  \Lambda = \{ \lambda > 0 : \lambda U_1 \le U_2 \text{ in } [z_0,R] \}
\]
is non-empty, then $U_1 \le U_2$.
\end{lemma}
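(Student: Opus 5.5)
We use the homogeneity of $\fixedpoint$ together with a sup-argument over $\Lambda$: we push the best constant $\lambda^* = \sup \Lambda$ up to at least $1$. The operator $U \mapsto \int_r^\infty K(r,\rho)U(\rho)\,d\rho$ is linear and order preserving, because $K \ge 0$. Hence $\fixedpoint(\lambda U) = \lambda^{1/m}\fixedpoint U$, and $U \le V$ implies $\fixedpoint U \le \fixedpoint V$.

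Here $R$ is understood as the right end of the support of $U_1$; this is implicit in the statement and is how it will be used in \Cref{thm:profile forward uniqueness}. Outside $[z_0,R]$ we have $U_1 = 0 \le U_2$, so the inequality $\lambda U_1 \le U_2$ on $[z_0,R]$ is the same as on all of $[z_0,\infty)$.

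\textbf{Step 1 ($\Lambda$ is an interval closed at its sup).} If $\lambda \in \Lambda$ and $\mu < \lambda$, then $\mu \in \Lambda$ because $U_1 \ge 0$. Also $\Lambda$ is closed under pointwise limits. If $U_1 \equiv 0$ on $[z_0,R]$ the claim is trivial, so assume otherwise. Then $\Lambda$ is bounded: at a point where $U_1 > 0$ we need $\lambda \le U_2/U_1$ there. Set $\lambda^* = \max \Lambda \in (0,\infty)$.

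\textbf{Step 2 (bootstrapping).} From $\lambda^* U_1 \le U_2$ on $[z_0,\infty)$, monotonicity and homogeneity give
\[
(\lambda^*)^{1/m} U_1 = (\lambda^*)^{1/m}\fixedpoint U_1 = \fixedpoint(\lambda^* U_1) \le \fixedpoint U_2 = U_2 \quad\text{on } [z_0,\infty).
\]
The fixed-point identity is used only at points $r \ge z_0$, and it involves only values of the functions on $[r,\infty) \subset [z_0,\infty)$, so this is legitimate. Thus $(\lambda^*)^{1/m} \in \Lambda$, and maximality gives $(\lambda^*)^{1/m} \le \lambda^*$.

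\textbf{Step 3 (conclusion).} For $m > 1$, the inequality $(\lambda^*)^{1/m} \le \lambda^*$ forces $\lambda^* \ge 1$. Therefore $1 \in \Lambda$, which means $U_1 \le U_2$.

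The main subtlety is Step 2: the integral runs over $[r,\infty)$, so the comparison must hold on the whole half-line and not just on $[z_0,R]$. This is exactly why $\operatorname{supp} U_1 \subset [0,R]$ is needed, and why we should check that nonnegativity handles $\rho > R$. The step also uses $m > 1$ essentially. For $m < 1$ the inequality reverses and the argument would instead run on $\inf$ of the reverse set. In our setting $m > 1$, so no difficulty arises.
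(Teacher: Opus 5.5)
Your proposal is correct and follows the paper's proof: take $\lambda_* = \sup\Lambda$, use $K \ge 0$ and the $\frac1m$-homogeneity of $\fixedpoint$ to get $\lambda_*^{1/m} \in \Lambda$, and conclude from $m>1$ that $\lambda_* \ge 1$. You also make explicit points the paper leaves implicit: the supremum is finite and attained unless $U_1 \equiv 0$, and $R$ must be the right end of $\operatorname{supp} U_1$ so that the comparison holds on all of $[z_0,\infty)$, which is where $\fixedpoint$ integrates.
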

\begin{proof}
Consider $\lambda_* = \sup \Lambda > 0$. Since $K \ge 0$ we have that
\begin{equation*}
  U_2 = \fixedpoint U_2 \ge \fixedpoint (\lambda_* U_1) = \lambda_*^{\frac 1 {m}} \fixedpoint U_1 = \lambda_*^{\frac 1 {m}} U_1
\end{equation*}
Therefore, $\lambda_*^{\frac 1 {m}} \in \Lambda$, we deduce $\lambda_* \ge \lambda_*^{\frac 1 {m}}$, and hence $\lambda_* \in \{0\} \cup [1,+\infty)$. We conclude that $\lambda_* \ge 1$ and so $U_1 \le U_2$.
\end{proof}

We now proceed to the proof of the main result.
\begin{proof}[Proof of \Cref{thm:profile forward uniqueness}]
First, let us prove that $U_1$ and $U_2$ have the same support. Since $K$ is positive then the supports are of the form $[0,R_i]$ with $R_1 \in (z_0,\infty)$ and $R_2 \in [z_0,+\infty]$, and $U_i > 0$ in $(z_0,R_i)$.

Assume, towards a contradiction, that $R_1 < R_2$. Then, we have $U_2/U_1 \ge c > 0$ is bounded below in $\text{supp}(U_1)$.
Since outside $\text{supp}(U_1)$ the claim $\lambda U_1 \le U_2$ is trivial, then $c \in \Lambda$. Therefore, by the previous lemma $U_1 \le U_2$. Assume that $U_1 < U_2$ in some set $\omega \subset [z_0, R]$ of positive measure. Then, since $K(z_0, \rho) > 0$ for $\rho \in (z_0,+\infty)$,
$$
U_1(z_0) = \fixedpoint U_1 (z_0) < \fixedpoint U_2 (z_0) = U_2 (z_0).
$$
But $U_1(z_0) = U_2(z_0)$. We conclude that $U_1 = U_2$, and hence $R_1 = R_2$. This is a contradiction.

If $R_2 < R_1$ then $R_2$ is finite, and we can switch the roles in the previous argument. Hence, we deduce that $R_1 = R_2$.

Now let us consider $ U_2^{(\varepsilon)}$ a re-scaling of support $[0,(1+\varepsilon)R]$ using \eqref{eq:scaling solutions}.
Since $U_2^{(\varepsilon)}$ is non-increasing, and it is positive at $z = R$, then it is strictly positive in $[z_0,R]$. Since $U_1$ is bounded in $[z_0,R]$, then the set $\Lambda$ in \Cref{lem:comparison by homogeneity} is non-empty. And $U_1 \le U_2^{(\varepsilon)}$. Letting $\varepsilon \to 0$ we deduce $U_1 \le U_2$. By switching indices and repeating the argument $U_1 = U_2$.
\end{proof}

\begin{corollary}
\label{thm:comparison principle}
Let $U_1, U_2 \in C((0,\infty))$ be non-negative solutions with a bounded support of $U_i = \fixedpoint U_i$ in $(0,+\infty)$. If $U_1(z_0) < U_2(z_0)$ for some $z_0 > 0$ then for each $z\in (0, \infty)$ either $U_1(z) < U_2(z)$ or $U_1(z) = U_2(z) = 0$.
\end{corollary}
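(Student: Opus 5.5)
We argue by contradiction, combining the forward uniqueness of \Cref{thm:profile forward uniqueness} with continuity. Suppose $U_1(z_0) < U_2(z_0)$ and that the conclusion fails at some $z_1 > 0$. Then $U_1(z_1) \ge U_2(z_1)$, and we do not have $U_1(z_1) = U_2(z_1) = 0$. Let $R_i$ be the right endpoint of $\operatorname{supp}(U_i)$. Both solutions are non-increasing and positive on $(0,R_i)$. If $z_1 \ge R_2$, then $U_2(z_1) = 0$, so $U_1(z_1) > 0$, which means $R_1 > R_2$. If $z_1 < R_2$, then $U_1(z_1) \ge U_2(z_1) > 0$. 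In either case the function $w = U_2 - U_1$ satisfies $w(z_0) > 0$ and $w(z_1) \le 0$.

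By continuity of $w$ on $(0,\infty)$ and the intermediate value theorem, there is a point $z_2$ between $z_0$ and $z_1$ with $U_1(z_2) = U_2(z_2)$. We then apply \Cref{thm:profile forward uniqueness} with $z_0$ replaced by $z_2$. Both $U_i$ solve $U_i = \fixedpoint U_i$ on $(z_2,\infty)$, and $U_1$ has bounded support, so $U_1 = U_2$ on $[z_2,\infty)$.

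\textbf{Case $z_2 < z_0$ (that is, $z_1 < z_0$).} Then $z_0 \in [z_2,\infty)$, so $U_1(z_0) = U_2(z_0)$, contradicting the hypothesis.

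\textbf{Case $z_0 < z_1$.} Here $U_1 = U_2$ only to the right of $z_2$, and $z_2 \in (z_0,z_1]$, so we must propagate the equality leftwards to reach $z_0$. This is the main obstacle. The plan is to use the equation itself. For $z < z_2$ we split
\[
  U_i(z)^m = \int_z^{z_2} K(z,\rho)\, U_i(\rho)\, d\rho + \int_{z_2}^\infty K(z,\rho)\, U_i(\rho)\, d\rho ,
\]
where the second integral is the same for $i = 1,2$. Let $z_3 = \inf\{ z \in (0,z_2] : U_1 = U_2 \text{ on } [z,z_2]\}$, and suppose $z_3 > z_0$. On the interval $[z_3 - h, z_3]$, subtracting the two equations gives
\[
  |U_1^m - U_2^m|(z) \le \sup_{z \le \rho \le z_3} K(z,\rho)\; h \;\sup_{[z_3-h,z_3]} |U_1 - U_2| .
\]
We then use two facts on $[z_3-h,z_3]$, which lies away from $0$:
\begin{itemize}
  \item the $U_i$ are bounded below by a positive constant, because $U_i(z_3) > 0$ (as $z_3 < z_1 \le R_1$, or more simply because $U_1 > 0$ near $z_0$ since $U_2(z_0) > U_1(z_0) \ge 0$ and we are to the left of the supports' ends);
  \item $K(z,\rho) = \rho\, Q(z/\rho)$ is bounded for $z/\rho$ close to $1$, by \eqref{eq:Q away from 0}.
\end{itemize}
Consequently $|U_1^m - U_2^m| \ge c\,|U_1 - U_2|$ with $c > 0$, and we obtain
\[
  \sup_{[z_3-h,z_3]} |U_1 - U_2| \le C h \sup_{[z_3-h,z_3]} |U_1 - U_2| .
\]
Choosing $h$ small forces $U_1 = U_2$ on $[z_3-h,z_3]$, which contradicts the definition of $z_3$. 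Hence $z_3 \le z_0$, so $U_1(z_0) = U_2(z_0)$, again a contradiction.

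This Gronwall-type local backward uniqueness, which needs positivity and boundedness away from $0$, is the delicate step. If it proves troublesome, the alternative is to apply \Cref{lem:comparison by homogeneity} on $[z_0,\infty)$ instead: since $U_2(z_0) > U_1(z_0)$, the ratio $U_2/U_1$ is controlled near $z_0$, and this would yield $U_1 \le U_2$ on $[z_0,\infty)$. That inequality, combined with the strict positivity $K(z,\rho) > 0$ and the argument used in the proof of \Cref{thm:profile forward uniqueness}, then gives strict inequality $U_1(z) < U_2(z)$ wherever $U_2(z) > 0$, with $U_1(z) = U_2(z) = 0$ elsewhere.
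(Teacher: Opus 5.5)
Your proof is correct. It shares the paper's reduction but settles the key case differently.

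Both arguments use \Cref{thm:profile forward uniqueness} to reduce to the case $U_1=U_2$ on $[z_2,\infty)$ with $U_1(z_2)=U_2(z_2)>0$ and $z_0<z_2$. The paper takes $z_2$ as the infimum of the coincidence set; you obtain it from the intermediate value theorem.

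\begin{itemize}
\item The paper disposes of this case with \Cref{lem:comparison by homogeneity}, with the roles of $U_1$ and $U_2$ swapped. Both functions are positive and bounded on $[z_0,z_2]$ and coincide beyond it, so $\lambda U_2\le U_1$ on $[z_0,\infty)$ for some $\lambda>0$. The lemma then gives $U_2\le U_1$ there, contradicting $U_1(z_0)<U_2(z_0)$.
\item You instead prove local backward uniqueness by a contraction estimate. It uses only that $K(z,\rho)=\rho Q(z/\rho)$ is bounded near the diagonal, and that $s\mapsto s^m$ is bi-Lipschitz on an interval where the $U_i$ are bounded away from $0$. This is the device the paper itself uses in the proof of \Cref{lem:fast supersolution close 0}.
\end{itemize}

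The paper's route is shorter once the lemma is available, but it relies on superlinear homogeneity ($m>1$). Your backward step does not use homogeneity and adapts to any $m>0$. It yields the stronger fact that two solutions agreeing on $[z,\infty)$ with a positive common value at $z$ agree on all of $(0,\infty)$. In your argument, $m>1$ enters only through the forward uniqueness theorem.

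Two minor points.

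\begin{itemize}
\item The ``more simply'' justification of positivity is a non sequitur: $U_2(z_0)>U_1(z_0)\ge 0$ says nothing about the sign of $U_1(z_0)$. The right reason is the first one, stated more precisely. Since $z_1\in[z_2,\infty)$, forward uniqueness gives $U_1(z_1)=U_2(z_1)$, and failure of the conclusion then forces this common value to be positive. Monotonicity then gives $U_i\ge U_i(z_1)>0$ on $(0,z_1]$, so $z_3\le z_1<R_1$ rather than $z_3<z_1\le R_1$.
\item Your fallback is oriented the wrong way. Knowing $U_1\le U_2$ on $[z_0,\infty)$ together with $K>0$ only propagates strict inequality to the \emph{left} of places where the functions differ. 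It therefore cannot exclude coincidence on $[z_2,\infty)$, and it does not give $U_1<U_2$ wherever $U_2>0$. To reach a contradiction you must apply the lemma to obtain $U_2\le U_1$ on $[z_0,\infty)$, as the paper does.
\end{itemize}
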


\begin{proof}
If the set $\{ z > 0 : U_1(z) = U_2(z) \}$ is empty, the proof is complete since $U_1$ and $U_2$ are continuous. Otherwise, let $z_1 = \inf\{ z > 0 : U_1(z) = U_2(z) \}$. By continuity $U_1(z_1) = U_2(z_1)$. By \Cref{thm:profile forward uniqueness}, $U_1 = U_2$ in $[z_1,+\infty)$.

If $U_2(z_1) = 0$, since $U_i$ is non-increasing we have that $U_1 = U_2 = 0$ in $(z_1, +\infty)$.
Therefore $z_0 < z_1$ and, by continuity,
we conclude that $U_2 > U_1$ in $(0,z_1)$, so the result is true.

Assume, towards a contradiction that $U_1(z_1) = U_2(z_1) > 0$.
First, we point out that $U_1/U_2 = 1$ in $[z_1,+\infty)$.
If $z_0 > z_1$ the proof we have a contradiction and the proof is complete.
If $z_0 < z_1$, since $U_1$ is non-increasing, it is bounded below by a positive constant in $[z_0,z_1]$.
And $U_2$ is bounded above and below in $[z_0,z_1]$.
Hence $U_1/U_2$ is bounded below $[z_0, +\infty)$.
We can therefore apply \Cref{lem:comparison by homogeneity} to deduce $U_2 \le U_1$ in $[z_0, +\infty)$.
This is a contradiction.
\end{proof}

\subsection{A reference shape for the solution of support $[0,1]$}
We define the reference shape functions
\begin{equation}
\mathcal V_{\alpha,m} (z) =
\begin{dcases}
  \text{given by } \eqref{eq:profile at z=0} & \text{if } z \in (0,1/2) \\
  (1-z)_+^{\frac{2-\alpha}{m-1}}             & \text{if } z \ge 1/2.
\end{dcases}
\end{equation}
Notice that $\mathcal V_{\alpha,m}$ is non-increasing. We will now show that for suitable $\overline C$ we have that $\overline{\mathcal U}(z) = \overline C \mathcal V_{\alpha,m}(z)$ is a supersolution.

\begin{lemma}
Let $\alpha \in (0,1)$, $m > 1$, and $d \ge 1$. We have $\fixedpoint \mathcal V_{\alpha,m} \asymp \mathcal V_{\alpha,m}$.
\end{lemma}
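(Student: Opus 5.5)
We need $\fixedpoint \mathcal V \asymp \mathcal V$ on $(0,\infty)$, where $\fixedpoint \mathcal V(z) = \big(\int_z^1 \rho\, Q(z/\rho)\mathcal V(\rho)\,d\rho\big)^{1/m}$; here $\mathcal V=\mathcal V_{\alpha,m}$ is supported in $[0,1]$. Since both sides vanish for $z\ge1$, it suffices to show $\int_z^1 \rho Q(z/\rho)\mathcal V(\rho)\,d\rho \asymp \mathcal V(z)^m$ on $(0,1)$. The approach is to split into two regions, $z$ near the free boundary ($z\in[1/2,1)$) and $z$ near the origin ($z\in(0,1/2)$). In each region we use the two-sided kernel estimates \eqref{eq:Q away from 0} and \eqref{eq:Q close to 0}, together with the explicit form of $\mathcal V$. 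Continuity and positivity of both sides on compact subsets of $(0,1)$ handle any intermediate range, so only the endpoint asymptotics matter.

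\textbf{Near $z=1$.} For $z\in[1/2,1)$ and $\rho\in(z,1)$ we have $z/\rho\in[1/2,1)$, so \eqref{eq:Q away from 0} gives $\rho Q(z/\rho)\asymp (1-z/\rho)^{1-\alpha}\asymp(\rho-z)^{1-\alpha}$. Write $\beta=\frac{2-\alpha}{m-1}$. The integral becomes comparable to $\int_z^1(\rho-z)^{1-\alpha}(1-\rho)^{\beta}\,d\rho = B(2-\alpha,\beta+1)\,(1-z)^{2-\alpha+\beta}$. Since $2-\alpha+\beta=m\beta$, this is exactly $\mathcal V(z)^m$. This computation is what dictates the exponent $\beta$.

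\textbf{Near $z=0$.} For $z<1/2$ we split $\int_z^1=\int_z^{2z}+\int_{2z}^{1}$; the first piece is only needed when $2z<1$, which holds here. On $(2z,1)$ we have $z/\rho<1/2$, and \eqref{eq:Q close to 0} gives $\rho Q(z/\rho)\asymp \rho\, H(z/\rho)$, where $H(\eta)=1$, $-\log\eta$, or $\eta^{2-d}$ according to $d=1$, $d=2$, or $d\ge3$. The $\mathcal V$-factor is $\mathcal V(\rho)=z^{-(d-2)/m}$-type, $|\log\rho|^{1/m}$, or $1$ on $(0,1/2)$, and it is comparable to $1$ on $[1/2,1)$ (except in the last $\beta$-power decay, which is integrable).
\begin{itemize}
\item For $d=1$ the integral is $\asymp \int_{2z}^1\rho\,d\rho\asymp 1$.
\item For $d\ge3$ the integrand is $\asymp z^{2-d}\rho^{d-1}\rho^{-(d-2)/m}$. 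Its exponent is $(d-1)-(d-2)/m > -1$, and this is precisely where $m>m_c$ enters (indeed $m>1$ suffices). The integral is therefore dominated by $\rho\sim1$ and gives $\asymp z^{2-d}=\mathcal V(z)^m$.
\item For $d=2$ we similarly get $\asymp |\log z|\int\rho|\log\rho|^{1/m}\,d\rho\asymp|\log z|$, after checking that the $\log(\rho/z)$ factor does not spoil the lower bound for $\rho\in(1/2,1)$.
\end{itemize}
The piece $\int_z^{2z}$ is bounded above by $z\cdot\sup Q\cdot\mathcal V(z)\lesssim z^{2-d}\cdot z\,\mathcal V(z)$, using $Q\lesssim 1$ on $[1/2,1]$. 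Because $\mathcal V(z)\lesssim z^{-(d-2)/m}$ and $m>1$, this is $o(\mathcal V(z)^m)$. Nonnegativity makes the lower bound trivial.

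The main obstacle I expect is bookkeeping rather than any deep difficulty. In the near-origin regime one must verify carefully that the contribution from $\rho$ near the origin is integrable, i.e. that the exponent inequality holds. One also needs uniform constants across the transition at $z=1/2$, where the two definitions of $\mathcal V$ are glued and may have mismatched constants. That is harmless because $\asymp$ allows constants, but it forces a separate check that both sides are bounded above and below on the compact interval $[1/4,3/4]$, which follows from continuity and strict positivity.
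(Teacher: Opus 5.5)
Your proof is correct and follows essentially the paper's route: near the free boundary you use the Beta integral $\int_z^1(\rho-z)^{1-\alpha}(1-\rho)^{\beta}\,d\rho=B(2-\alpha,\beta+1)(1-z)^{m\beta}$, and near the origin you split off $\int_z^{2z}$ and apply \eqref{eq:Q close to 0}. The paper splits additionally at $\rho=1/2$; your separate check on a compact middle interval handles the gluing at $z=1/2$ more carefully than the paper does.

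One slip to fix in the near-diagonal piece. The correct bound is $\int_z^{2z}\rho\,Q(z/\rho)\mathcal V(\rho)\,d\rho\le 2z^2\sup_{[1/2,1]}Q\,\mathcal V(z)$, which is $o(\mathcal V(z)^m)$ for every $d$ and $m>1$. Your intermediate expression $z^{2-d}\cdot z\,\mathcal V(z)$ should be dropped: it is not an upper bound when $d=1$, and it does not give $o(z^{2-d})$ when $d\ge4$ and $1<m\le d-2$.
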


\begin{proof}
We separate the proof into different ranges of $z$. We will show only the proof of the upper bounds, and the lower bounds are proved similarly.

\subparagraph{Range $z \in (\frac 1 4,1)$ and $d \ge 1$.}
We have $K(z,\rho) = \rho Q(z/\rho)$ and hence using \eqref{eq:Q away from 0} we have that using the change of variable $(1-z) \sigma = 1-\rho$
\begin{align*}
  \int_z^1 K(z,\rho) \mathcal V_{\alpha,m} (\rho) d \rho
  & =   \int_z^1 \rho Q(z/\rho) (1-\rho)_+^{\frac{2-\alpha}{m-1}} d \rho
  \\
  & \le  C(\alpha,m,d)   \int_z^1 \rho (1-z/\rho)^{1-\alpha} (1-\rho)_+^{\frac{2-\alpha}{m-1}} d \rho
  \\
  & \le  C(\alpha,m,d) (1-z)^{1-\alpha + \frac{2-\alpha}{m-1} + 1} \int_0^1 (1 - \sigma)^{1-\alpha} \sigma^{\frac{2-\alpha}{m-1}} d \sigma
  \\
  & \le {C(\alpha,m,d) } \mathcal V_{\alpha,m}(z)^m.
\end{align*}
And the lower bound follows similarly bounding $\rho \ge z \ge \frac 1 4$.

\subparagraph{Range $z \in (0,1/4)$ when $d = 1$.}
We have that
\begin{align*}
  \int_z^1 K(z,\rho) \mathcal V (\rho) d \rho
  & \le C \left(  \int_z^{2z}  Q(z/ \rho) \rho d \rho + \int_{2z}^{1/2}   Q(z/\rho) \rho d \rho + \int_{1/2}^{1}  Q(z/\rho) (1-\rho)^{\frac{2-\alpha}{m-1}} d \rho
  \right)
  \\
  & \le C\left( \int_z^{2z}  (1 - z/\rho)^{1-\alpha}  \rho d \rho + \int_{2z}^{1/2}\rho d \rho+ \int_{1/2}^{1} (1-\rho)^{\frac{2-\alpha}{m-1}} d \rho
  \right)
  \\
  & \le C \left(
    z^2
  +  1 \right)
  \le C \mathcal V(z)^m.
\end{align*}
The lower bound follows equivalently.

\subparagraph{Range $z \in (0,1/4)$ when $d \ge 3$.}
We split the computation into three parts
\begin{align*}
  \int_z^1 K(z,\rho) \mathcal V_{\alpha,m} (\rho) d \rho
  & \le C\left( \int_z^{2z}  K(z, \rho) \rho^{-\gamma} d \rho
    + \int_{2z}^{1/2}   K(z, \rho) \rho^{-\gamma} d \rho
  + \int_{1/2}^{1}  K(z, \rho) (1-\rho)^{\frac{2-\alpha}{m-1}} d \rho \right).
\end{align*}
Now we deal with each term separately
\begin{align*}
  \int_z^{2z}  K(z, \rho) \rho^{-\gamma} d \rho
  & \le C \int_z^{2z}  Q(z/ \rho) \rho^{1-\gamma} d \rho
  \le C\int_z^{2z}  (1 - z/\rho)^{1-\alpha}  \rho^{1-\gamma} d \rho
  \le C z^{2-\gamma}.
\end{align*}
We also have that
\begin{align*}
  \int_{2z}^{1/2}   K(z, \rho) \rho^{-\gamma} d \rho
  & \le C \int_{2z}^{1/2}   Q(z/\rho) \rho^{1-\gamma} d \rho
  \le C \int_{2z}^{1/2}(z/\rho)^{2-d}\rho^{1-\gamma} d \rho
  \le C z^{2-d} .
\end{align*}
Lastly
\begin{align*}
  \int_{1/2}^{1}  K(z, \rho) (1-\rho)^{\frac{2-\alpha}{m-1}} d \rho
  & \le C \int_{1/2}^{1}  Q(z/\rho) (1-\rho)^{\frac{2-\alpha}{m-1}} d \rho
  \le C \int_{1/2}^{1} (z/\rho)^{2-d} (1-\rho)^{\frac{2-\alpha}{m-1}} d \rho
  \le Cz^{2-d}.
\end{align*}
Because $m > 1$, we have $\gamma < d-2$, which implies that the term $z^{2-d}$ strictly dominates the term $z^{2-\gamma}$ as $z \to 0^+$. Given that $\gamma m = d-2$, this yields the correct target scaling. We conclude that $(\fixedpoint\mathcal{V}_d(z))^m \le C z^{2-d} \le C \mathcal{V}_d(z)^m$. The lower bound follows analogously.

\subparagraph{Range $z \in (0,1/4)$ when $d = 2$.}
We proceed in a similar way, computing each term
\begin{align*}
  \int_z^1 & K(z,\rho) \mathcal V_{\alpha,m} (\rho) d \rho
  \\
  & \le C \left( \int_z^{2z}  K(z, \rho) (-\log \rho)^{\frac 1 m} d \rho + \int_{2z}^{1/2}   K(z, \rho) (-\log \rho) ^{\frac 1 m}d \rho + \int_{1/2}^{1}  K(z, \rho) (1-\rho)^{\frac{2-\alpha}{m-1}} d \rho \right).
\end{align*}
We now estimate the terms separately. First, we deal with the influence of $\rho$ closest to $z$
\begin{align*}
  \int_z^{2z}  K(z, \rho) (-\log \rho)^{\frac 1 m} d \rho
  & \le C \int_z^{2z}  Q(z/ \rho) \rho(-\log \rho)^{\frac 1 m} d \rho
  \\
  & \le C  \int_z^{2z}  (1 - z/\rho)^{1-\alpha}  \rho(-\log \rho)^{\frac 1 m} d \rho
  \\
  & \le C z^2 \int_1^{2}  (1 - \sigma^{-1})^{1-\alpha}  \sigma (-\log z - \log \sigma)^{\frac 1 m} d \sigma
  \\
  & \le  C z^2 (-\log z)^{\frac 1 m} \int_1^{2}  (1 - \sigma^{-1})^{1-\alpha}  \sigma  d \sigma.
\end{align*}
Now we deal with the influence of intermediate values of $\rho$
\begin{align*}
  \int_{2z}^{1/2}  K(z, \rho) (-\log \rho)^{\frac 1 m} d \rho
  & \le \int_{2z}^{1/2}   Q(z/\rho) \rho(-\log \rho)^{\frac 1 m} d \rho
  \le C \int_{2z}^{1/2}(-\log z/\rho)\rho(-\log \rho)^{\frac 1 m} d \rho
  \\
  & \le C\left( (-\log z) \int_{z}^{\frac 1 2} \rho (-\log \rho)^{\frac 1 m} d\rho -   \int_{z}^{\frac 1 2} \rho (-\log \rho)^{\frac 1 m + 1}d\rho \right)
  \\
  & \le C (-\log z).
\end{align*}
Lastly, we control the terms where $\rho$ is very far from $z$
\begin{align*}
  \int_{\frac 1 2}^{1}  K(z, \rho) (-\log \rho)^{\frac 1 m} d \rho
  & \le \int_{1/2}^{1}  Q(z/\rho) (1-\rho)^{\frac{2-\alpha}{m-1}} d \rho
  \le C \int_{1/2}^{1} (-\log z/\rho) (1-\rho)^{\frac{2-\alpha}{m-1}} d \rho
  \\
  & 
  \le  C (-\log (z/2)) \int_{1/2}^{1} (1-\rho)^{\frac{2-\alpha}{m-1}} d \rho
  \le C (-\log z).
\end{align*}
Since $m > 1$, we conclude that $(\fixedpoint\mathcal V (z))^{m} \le C (-\log z) \le C \mathcal V(z)^m$. 
\end{proof}

\subsection{Proof of \Cref{thm:existence} for $m > 1$ and \Cref{thm:slow-diffusion}}
\label{sec:existence proof m>1}
\paragraph{Construction of the canonical profile of support $[0,1]$.}
Taking into account this last equivalence, we can construct the supersolutions
\begin{equation}
\label{eq:sub and supersolutions}
\begin{aligned}
  \overline{\mathcal U}(z)  & = \overline c \mathcal V_{\alpha,m}(z) \qquad \text{where } \overline c \ge \left(\sup_{z \in (0,1)} \frac{\fixedpoint \mathcal V_{\alpha,m} (z)}{\mathcal V_{\alpha,m}(z)}\right)^{\frac m {m-1}},
  \\
  \underline{\mathcal U}(z) & = \underline c \mathcal V_{\alpha,m}(z) \qquad \text{where } \underline c \le \left(\inf_{z \in (0,1)} \frac{\fixedpoint \mathcal V_{\alpha,m} (z)}{\mathcal V_{\alpha,m}(z)}\right)^{\frac m {m-1}}.
\end{aligned}
\end{equation}
We use the method of sub- and supersolutions using \eqref{eq:sub and supersolutions}. Let $U^{(0)} = \underline{\mathcal U}$ and $U^{(n)} = \fixedpoint U^{(n-1)}$. In every iteration, since $K \ge 0$ we have that
\begin{equation*}
U^{(n)} \le \fixedpoint U^{(n)} \le \fixedpoint \overline{\mathcal U} \le \overline{\mathcal U}.
\end{equation*}
Therefore, $U^{(n)}$ is a point-wise non-decreasing sequence of continuous functions bounded above by $\overline{\mathcal U}$. Therefore, it has a point-wise limit $\mathcal U$. Since $\overline{\mathcal U}$ provides an integrable bound for the kernel, we can apply the dominated convergence theorem to deduce that $\mathcal U = \fixedpoint \mathcal U$. Finally, since $\mathcal U$ is the image of the integral operator $\fixedpoint$, we immediately deduce $\mathcal U^m \in C^1((0,\infty))$. Due to the sub- and supersolutions, the support of $\mathcal U$ is precisely $[0,1]$. Furthermore, due to the integrability of $\mathcal U$, we have that $z^{d-1} \mathcal U(z) \in L^1 (0,\infty)$.

\paragraph{Construction of a solution of given mass.}
Using \Cref{sec:scaling} we construct the solution of mass $M$ by \eqref{eq:rescaling profile to correct mass}. The uniqueness of $U_{\alpha,m,M}$ is due to \Cref{thm:comparison principle}.

\paragraph{The solution satisfies the weak formulation}
We can apply \Cref{lem:sss is weak solution}.

\paragraph{Sharp constant at the free boundary.}
\label{sec:sharp constant at free boundary}
Similarly to \eqref{eq:sub and supersolutions}, we can use sub- and supersolutions in $(z,1)$ to deduce that
\begin{equation}
\label{eq:slow sub and supersolutions near free boundary}
\left(\inf_{\xi \in (z,1)} \frac{\int_\xi^1 K(\xi, \rho) \mathcal V(\rho) d\rho}{\mathcal V(z)^m}\right)^{\frac 1 {m-1}} \mathcal V(z) \le U(z) \le \left(\sup_{\xi \in (z,1)} \frac{\int_\xi^1 K(\xi, \rho) \mathcal V(\rho) d\rho}{\mathcal V(z)^m}\right)^{\frac 1 {m-1}}  \mathcal V(z).
\end{equation}
To compute this limit directly, we set $\gamma = \frac{2-\alpha}{m-1}$, which implies $\gamma m = 2 - \alpha + \gamma$. We evaluate the integral using the change of variables $1-\rho = (1-z)\sigma$, which gives $d\rho = (1-z)d\sigma$:
\begin{equation}
\label{eq:slow sharp constant at free boundary deduction}
\begin{aligned}
  \lim_{z \to 1^-} \frac{\int_z^1 K(z, \rho) \mathcal V(\rho) d\rho} {\mathcal V(z)^m}
  &= \frac{b^\alpha}{\Gamma(2-\alpha)} \lim_{z \to 1^-} \frac{1}{(1-z)^{\gamma m}} \int_{z}^1 (1-\tfrac{z}{\rho})^{1-\alpha} (1-\rho)^\gamma d \rho
  \\
  &= \frac{b^\alpha}{\Gamma(2-\alpha)} \lim_{z \to 1^-} \frac{1}{(1-z)^{\gamma m}} \int_{0}^1 \Big(\frac{(1-z)(1-\sigma)}{\rho}\Big)^{1-\alpha} (1-z)^\gamma \sigma^\gamma (1-z) d \sigma
  \\
  &= \frac{b^\alpha}{\Gamma(2-\alpha)} \lim_{z \to 1^-} \frac{(1-z)^{2-\alpha+\gamma}}{(1-z)^{\gamma m}} \int_{0}^1 (1-\sigma)^{1-\alpha} \sigma^\gamma d \sigma
  \\
  &=\frac{b^\alpha}{\Gamma(2-\alpha)} \int_{0}^1 (1-\sigma)^{1-\alpha} \sigma^\gamma d \sigma = \frac{b^\alpha}{\Gamma(2-\alpha)} B(2-\alpha, 1 + \gamma) \\
  &= \frac{b^\alpha \Gamma(1 + \gamma)}{\Gamma(3-\alpha+\gamma)},
\end{aligned}
\end{equation}
since $\rho \to 1$ as $z \to 1^-$, and the powers of $(1-z)$ cancel because $\gamma m = 2-\alpha+\gamma$. Evaluating the final Beta function integral, we recover the exact value for the sharp constant \eqref{eq:slow sharp constant at free boundary}.

\paragraph{Classical limit as $\alpha \to 1^-$.}
Due to \eqref{eq:Lieb upper bound} and \eqref{eq:equicontinuity} we have that $\{\mathcal U_{\alpha,m} : \alpha < 1 \}$ is pre-compact in $C_{loc}((0,1])$.
Hence, there is a subsequence $\mathcal U_{\alpha_k,m}$ that converges to $\widehat{\mathcal U}_m$. Next we observe from the supersolutions that $\mathcal U_{\alpha,m,d}$ are uniformly controlled as $\alpha \to 1^-$. So, the support of the limit is still $[0,1]$. Moving to the limit of the equation, we deduce that $\widehat{\mathcal U}_m$ is a solution of the equation at $\alpha = 1$. Lastly, we use uniqueness to show that it is $\mathcal U_{m}$.

\paragraph{Mesa limit.} Lastly, we discuss the limit $m \to \infty$.

\subparagraph{Mesa limit of $\mathcal U_{\alpha,m}$.}
For $z > 0$ we go back to the estimate near the free boundary \eqref{eq:slow sharp constant at free boundary}, and we adapt it slightly to
\begin{align*}
& \left(\inf_{\xi \in (z,1)} C(\xi)\right)^{\frac 1 {m-1}} (1-z)^{\frac{2-\alpha}{1-m}}
\le \mathcal U_{\alpha,m}(z)
\le \left(\sup_{\xi \in (z,1)} C(\xi)\right)^{\frac 1 {m-1}} (1-z)^{\frac{2-\alpha}{1-m}},
\end{align*}
where
\[
C(z) \coloneqq \frac{\int_z^1 K(z, \rho) (1-\rho)^{\frac{2-\alpha}{1-m}} d\rho}{(1-z)^{\frac{2-\alpha}{1-m}m}}.
\]
Using the explicit form of $Q_m'$ we can deduce similarly to \eqref{eq:slow sharp constant at free boundary deduction} that
\begin{equation}
\label{eq:slow upper and lower bounds near z=1}
(b^{-\alpha} c(d,\alpha,z))^{\frac{1}{m-1}} (1-z)_+^{\frac{2-\alpha}{m-1}} \le \mathcal U_{\alpha,m}(z) \le (b^{-\alpha} C(d,\alpha,z))^{\frac{1}{m-1}} (1-z)_+^{\frac{2-\alpha}{m-1}}
\end{equation}
Returning to \eqref{eq:slow upper and lower bounds near z=1}
Given that $b_{\alpha,m}^{\frac 1 {m-1}} = \left(\frac{\alpha}{2+d(m-1)}\right)^{\frac 1{m-1}} \to 1$ is as $m \to \infty$ we deduce that
\begin{equation*}
\lim_{m\to \infty} \mathcal U_{\alpha,m} (z) =
\begin{dcases}
  1 & \text{if } 0 < z < 1, \\
  0 & \text{if } z \ge 1.
\end{dcases}
\end{equation*}

\subparagraph{Mesa limit of $U_{\alpha,m,M}$.} Now we look at the construction of the profile of mass $M$ deduced in \eqref{eq:rescaling profile to correct mass} and let $L = A^{-\frac{m-1}{2}}$.
Due to the point-wise convergence and the existence of super-solutions we can use the dominated convergence theorem to deduce that
\begin{equation*}
L(\alpha,m,d,M) \to \frac{dM}{|\partial B_1|} = \frac{M}{|B_1|}.
\end{equation*}

\subparagraph{Mesa limit of $u_{\alpha,m,M}$.} Now we notice that $b_{\alpha,m} \to 0$, therefore we have that
\begin{equation*}
u_{\alpha,m,M}(t,x) = t^{-b_{\alpha,m} d} U_{\alpha,m,M}(t^{-b_{\alpha m}}|x|) \to \frac{M}{|B_1|} \chi_{[0,1]}(|x|).
\end{equation*}
This concludes the proof. \qed

\begin{remark}
We point out that the stability of \eqref{eq:slow sharp constant at free boundary} is consistent with the limit as $\alpha \to 1^-$.
We have that
\begin{equation*}
  \lim_{\alpha \to 1^-} \frac{\Gamma(1 + \tfrac{2-\alpha}{m-1} )}{\Gamma(3-\alpha+\tfrac{2-\alpha}{m-1})} = \frac{\Gamma(1 + \frac{1}{m-1})}{\Gamma(2 + \frac{1}{m-1})} = \frac{1}{1 + \frac{1}{m-1}} = \frac{(m-1)}{m},
\end{equation*}
and hence for $z \sim 1$
\begin{align*}
  \mathcal U_{\alpha,m}(z) &\sim \left( \frac{b^\alpha \Gamma(1 + \tfrac{2-\alpha}{m-1} )}{\Gamma(3-\alpha+\tfrac{2-\alpha}{m-1})} (1-z)_+ \right)^{\frac{2-\alpha}{m-1}}
  \sim
  \left( \frac{b_{\alpha,m}^\alpha \Gamma(1 + \tfrac{2-\alpha}{m-1} )}{\Gamma(3-\alpha+\tfrac{2-\alpha}{m-1})} \frac{(1-z^2)_+}{2} \right)^{\frac{2-\alpha}{m-1}}
  \\
  &\to
  \left( \frac{b_{1,m}(m-1)}{2m} (1-z^2)_+ \right)^{\frac{1}{m-1}} = \mathcal U_m(z).
\end{align*}
\end{remark}

\subsection{Numerical analysis for $m > 1$}

We now restrict ourselves to piece-wise constant functions. Then we look for the fixed point $U = \mathcal K_h U$ where
\begin{equation}
(\fixedpoint_h U)_i \coloneqq  \left( \sum_{j=i}^{+\infty} U_j K_{ij} \right)^{\frac 1 m}, \qquad \text{ where }
K_{ij} = \int_{z_{j}}^{z_{j+1}} K(z_i, \rho) d\rho.
\end{equation}
In dimension $d = 1$ we can use the closed formula for $Q$ given by \eqref{eq:Q from incomplete gammas}. This formula is stable as $\alpha \to 1^+$. For $d \ge 2$ we can integrate by parts
\begin{equation}
\begin{aligned}
  K_{ij} & = \int_{z_j}^{z_{j+1}} K(z_i, \rho) d\rho = \int_{z_j}^{z_{j+1}} \rho Q(z_i/\rho) d\rho = \color{teal}-\normalcolor\int_{z_j}^{z_{j+1}}  \int_{z_i/\rho}^1 \rho Q'(\eta)  d \eta d \rho
  \\
  & = \color{teal}-\normalcolor\int_{\frac{z_i}{z_{j+1}}}^{1} Q'(\eta) \frac{z_{j+1}^2 - \max\{ z_j, \frac{z_i}{\eta} \}^2} 2 d \eta
  \\
  & = \int_{\frac{z_i}{z_{j+1}}}^{1} \frac{\eta^{2-d-\frac{1}{b}}}{\Gamma(1-\alpha)} \frac{z_{j+1}^2 - \max\{ z_j, \frac{z_i}{\eta} \}^2} 2 (1-\eta^{1/b})^{-\alpha} \eta ^{1/b-1} d \eta,
\end{aligned}
\end{equation}
where we have used the change of variable $w = (1 - \eta^{1/b})^{1-\alpha}$. In the limit $\alpha \to 1^-$ we recover $K_{ij} \to b_1 \frac{z_{j+1}^2 - z_j^2}{2}$.
Our weights $K_{ij}$ our weights are not exactly computable. We have implemented this scheme in \texttt{julia} using the \texttt{Integrals.jl} package to compute these integrals, using the Gauss-Kronrod quadrature implemented in \texttt{QuadGK.jl}.
In $d=1$ our scheme is similar to \cite{Plociniczak2019}, where the effect of the quadrature formula for the computation of the weights.

\bigskip

We find approximate fixed points this problem by Picard iterations $U^{(n)} = \fixedpoint_h U^{(n-1)}$. We start iterating from a sub-solution we construct as follows.
We consider a mesh with $z_0 = 0$ and $z_I = 1$. As a initial datum we take $U^{(0)}_i = c \mathcal V_{\alpha,m}(z_i)$ with $c$ small enough
\begin{equation*}
c \le \left( \min_{i=1,\cdots,I-1} \frac{\sum_{j=i}^{I-1}  K_{ij} \mathcal V_{\alpha,m}(z_j) }{\mathcal V_{\alpha,m}(z_i)^m}\right)^{\frac 1 {m-1}}
\end{equation*}
so that $U^{(0)} \le \fixedpoint_h U^{(0)}$. This ensures that $U^{(n)} = \fixedpoint_h U^{(n-1)}$ is non-decreasing in $n$. Conversely, we can construct a supersolution by taking $\overline U_i = C \mathcal V_{\alpha,m}(z_i)$ with $C$ chosen large enough:
\begin{equation*}
C \ge \left( \max_{i=1,\cdots,I-1} \frac{\sum_{j=i}^{I-1}  K_{ij} \mathcal V_{\alpha,m}(z_j) }{\mathcal V_{\alpha,m}(z_i)^m}\right)^{\frac 1 {m-1}}.
\end{equation*}
This ensures that $U^{(n)} \le \overline U_i$.

\begin{remark}
From a theoretical point of view, if $\mathcal U$ is a subsolution to \eqref{eq:profile equation}, then we can take $U_i^{(0)} = \underline{\mathcal U}(z_i)$ with a non-increasing sub-solution we have that
\begin{align*}
  \underline{\mathcal U}(z_i)^m & \le  \int_{z_i}^{+\infty} K(z,\rho) \underline{\mathcal U}(\rho) d \rho   =  \sum_{j=i}^{+\infty}  \int_{z_j}^{z_{j+1}} K(z,\rho) \underline{\mathcal U}(\rho) d \rho
  \le \sum_{j=i}^{+\infty} \underline{\mathcal U}(z_j) K_{ij} .
\end{align*}
Therefore, the sequence $U^{(n+1)} = \fixedpoint_h U^{(n)}$ is monotone decreasing. This procedure does not allow us to construct supersolutions.
\end{remark}

\normalcolor

\begin{remark}
Convergence of this scheme as $h = \max_i |z_{i+1} - z_{i}| \to 0$ tends to $0$ can be accomplished proving Hölder estimates of the numerical solutions.
\end{remark}

\normalcolor

\section{Fast diffusion $m_c < m < 1$}

\subsection{Very singular solution}
Let us define the possibly infinite integral
\[
\mathsf Q(\gamma) \coloneqq \int_1^\infty Q(\sigma^{-1}) \sigma^{1-\gamma} d \sigma = \int_0^1 Q(\sigma) \sigma^{\gamma - 3} d \sigma.
\]
When $\gamma > \max\{2,d\}$, the boundary terms vanish, and we can integrate by parts to write:
\begin{equation}
\label{eq:mathsf Q}
\mathsf Q(\gamma) = \int_0^1 (-Q'(\sigma)) \frac{\sigma^{\gamma - 2}}{\gamma - 2} d \sigma = \frac{b}{\gamma - 2} \frac{B\Big( 1-\alpha, b(\gamma-d) \Big)}{\Gamma(1-\alpha)}= \frac{b}{\gamma - 2} \frac{\Gamma\Big( b(\gamma-d) \Big)}{\Gamma(1-\alpha + b(\gamma-d))}.
\end{equation}
Notice that if $m > m_c = \frac{d-2}{d}$, then $\gamma = \frac{2}{1-m} > \max\{2,d\}$.

\begin{lemma}
\label{lem:vss}
For $m \in (m_c,1)$ we have that \eqref{eq:vss} is a solution to \eqref{eq:profile equation}.
\end{lemma}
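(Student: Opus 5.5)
We plug the power ansatz $U(z)=c\,z^{-\gamma}$ with $\gamma = \frac{2}{1-m}$ directly into \eqref{eq:profile equation} and reduce everything to a scalar identity for $c$. Using $K(z,\rho)=\rho\,Q(z/\rho)$ and the substitution $\rho = z\sigma$, we get
\begin{equation*}
\int_z^{+\infty} K(z,\rho)\, c\rho^{-\gamma}\, d\rho = c\, z^{2-\gamma}\int_1^\infty Q(\sigma^{-1})\sigma^{1-\gamma}\, d\sigma = c\, z^{2-\gamma}\, \mathsf Q(\gamma),
\end{equation*}
while $U(z)^m = c^m z^{-\gamma m}$. Since $\gamma = \frac{2}{1-m}$ gives $\gamma - 2 = \gamma m$, the powers of $z$ match for every $z>0$. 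The equation therefore reduces to $c^{m} = c\,\mathsf Q(\gamma)$, that is, $c = \mathsf Q(\gamma)^{-\frac{1}{1-m}}$, provided $0<\mathsf Q(\gamma)<\infty$.

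The remaining steps are the following.

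First, we check finiteness and justify the integration by parts leading to \eqref{eq:mathsf Q}. Near $\sigma = 1$ in the form $\int_0^1 Q(\sigma)\sigma^{\gamma-3}d\sigma$, $Q$ is bounded by \eqref{eq:Q away from 0}, so there is no problem there. Near $\sigma=0$ we use \eqref{eq:Q close to 0}: the integrand behaves like $\sigma^{\gamma-3}$ (for $d=1$), $|\log\sigma|\sigma^{\gamma-3}$ (for $d=2$), or $\sigma^{\gamma-1-d}$ (for $d\ge3$). These are integrable precisely when $\gamma>2$ and $\gamma>d$. This is exactly the condition $m>m_c$ noted just before the lemma, since $\gamma>d \iff m>\frac{d-2}{d}$. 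The same bounds show that the boundary term $Q(\sigma)\sigma^{\gamma-2}/(\gamma-2)$ vanishes at $\sigma=0$, and it vanishes at $\sigma=1$ because $Q(1)=0$.

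Second, we compute $\mathsf Q(\gamma)$ using \eqref{eq:Q'} and the change of variables $t=\sigma^{1/b}$. This gives
\begin{equation*}
\int_0^1 (1-t)^{-\alpha} t^{b(\gamma-d)-1}\, b\, dt = b\,B\bigl(1-\alpha, b(\gamma-d)\bigr),
\end{equation*}
where the right-hand side is positive and finite since $b(\gamma-d)>0$. This yields \eqref{eq:mathsf Q}.

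Third, we simplify. We have $b(\gamma-d) = \frac{\alpha}{2+d(m-1)}\cdot\frac{2-d(1-m)}{1-m} = \frac{\alpha}{1-m}$ and $\frac{1}{\gamma-2} = \frac{1-m}{2m}$. Hence
\begin{equation*}
\mathsf Q(\gamma) = \frac{(1-m)b}{2m}\frac{\Gamma(\frac{\alpha}{1-m})}{\Gamma(1-\alpha+\frac{\alpha}{1-m})},
\end{equation*}
and $c=\mathsf Q(\gamma)^{-1/(1-m)}$ coincides with $c^*_{\alpha,m}$ in \eqref{eq:vss}.

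The only delicate point is the first step: the convergence of the integral at $\sigma\to0$ in dimensions $d\ge2$, where $Q$ blows up, together with the vanishing of the boundary terms. Both are settled by \eqref{eq:Q close to 0} combined with $\gamma>\max\{2,d\}$.
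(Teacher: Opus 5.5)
Your proposal is correct and follows essentially the same route as the paper. You substitute $c z^{-\gamma}$, rescale the integral to $c\, z^{2-\gamma}\mathsf Q(\gamma)$, match exponents to get $\gamma = \frac{2}{1-m}$ and $c^{m-1} = \mathsf Q(\gamma)$, and use the bounds on $Q$ near $0$ and $1$ to show that $\mathsf Q(\gamma)$ is finite exactly when $\gamma > \max\{2,d\}$, i.e.\ $m > m_c$. Your extra checks of the boundary terms and of the closed form via \eqref{eq:Q'} and the Beta function reproduce \eqref{eq:mathsf Q}, which the paper derives just before the lemma, and they correctly identify the constant $c^*_{\alpha,m}$ in \eqref{eq:vss}.
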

\begin{proof}
Consider a candidate of the form $U(z) = c z^{-\gamma}$. Then we have $z > 0$ (and putting $\rho=z/\sigma$)
\begin{align*}
  (\fixedpoint U(z))^m & = c \int_z^\infty \rho Q(z/\rho)\, {\rho ^{-\gamma}} \,d \rho       \\
  & =   c z^{2-\gamma} {\int_0^1 Q(\sigma) \sigma^{\gamma-3} d \sigma } \\
  & = c^{1-m} \int_0^1 Q(\sigma) \sigma^{\gamma-3} d \sigma
  \, (c z^{-\frac{2 - \gamma}m})^m.
\end{align*}
The correct power is therefore a $\gamma$ such that $-\gamma = \frac{2-\gamma}{m}$, that is, $\gamma = \frac{2}{1-m}$. Note that $\gamma>d$ exactly for $m> m_c=(d-2)/d$. We also have
\begin{equation*}
  { \int_0^1 Q(\sigma) \sigma^{\gamma-3} d \sigma \le C \left( \int_{1/2}^1  (1-\sigma)^{1-\alpha} \sigma^{\gamma-3} d \sigma + \int_0^{1/2} \sigma^{2-d} \sigma^{\gamma-3}\,d\sigma \right).}
\end{equation*}
Therefore, the integral is also finite if $m \in (1-\frac 2 d,1)$. We get an exact solution for the precise value
\[
  (c^*)^{m-1}= \int_0^1 Q(\sigma) \sigma^{\gamma-3} d \sigma= \mathsf Q(\tfrac{2}{1-m}).
\]
This concludes the proof.
\end{proof}

\subsection{Higher order expansion of the tails of solutions below \eqref{eq:vss}}

This is a decreasing and positive function.
By the dominated convergence theorem, we have $\mathsf Q(+\infty) = 0$. We have calculated $(c^*)^{1-m} = \mathsf Q(\frac{2}{1-m})$ and now we look for the first order expansion
\[
U(z) = c^* z^{-\frac 2 {1-m}} ( 1 + A z^{-\gamma} ).
\]
We directly compute that if $1 + Az^{-\gamma} \ge 0$ then as $z\rightarrow \infty$, we have\normalcolor
\begin{align*}
\frac{U(z)^m}{U^*(z)^m}                & = (1 + Az^{-\gamma})^m = 1 + m A z ^{-\gamma} + {{O(A^2z^{-2\gamma})}},
\\
\frac{(\fixedpoint U(z))^m}{U^* (z)^m} & = 1 + \frac{\mathsf Q(\frac{2}{1-m}+\gamma)}{\mathsf Q(\frac{2}{1-m})} A z^{-\gamma}.
\end{align*}
where $U^*$ is the VSS given by \eqref{eq:vss}.
Therefore, since $m \in (0,1)$ we can take $\gamma^*$ such that
\begin{equation}
\label{eq:gamma^star}
\mathsf Q\left(\tfrac{2}{1-m}+\gamma^*\right) = m \mathsf Q\left(\tfrac{2}{1-m}\right).
\end{equation}
And the constant $A$ is free.

\begin{remark}
We recall the explicit formula \eqref{eq:mathsf Q} in terms of functions allows to compute efficiently $\gamma^*$ using numerical methods.
\end{remark}

\paragraph{A sub-solution.}
To strictly estimate the error, recall that for $m \in (0,1)$, the map $s \mapsto s^m$ is strictly concave. Therefore, its tangent line at $s=1$ lies strictly above the graph, which gives the elementary inequality $(1-x)^m \le 1 - mx$ for all $x \in (0,1)$.
Setting $x = A z^{-\gamma^*}$ with $A > 0$, we immediately have:
\begin{equation*}
\frac{U(z)^m}{U^*(z)^m} = (1 - A z^{-\gamma^*})^m \le 1 - m A z^{-\gamma^*} = \frac{(\fixedpoint U(z))^m}{U^* (z)^m}.
\end{equation*}
Also, where $U(z) \le 0$, the positive part $(U(z))_+ = 0$ is trivially a subsolution. As a consequence, we have the following result.

\begin{lemma}
For $m \in (0,1)$ and $A\in \mathbb R$ we find that $\underline{\mathcal U}(z) = U^*(z) ( 1 - A z^{-\gamma^*})_+$ is a subsolution to \eqref{eq:profile equation} for all $z \in (0, +\infty)$.
\end{lemma}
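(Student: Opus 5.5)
The plan is to compute $\fixedpoint \underline{\mathcal U}$ exactly by linearity of the integral operator on power functions, and then compare pointwise using the concavity of $s \mapsto s^m$. Throughout I set $\gamma_0 = \frac{2}{1-m}$, so that $U^*(z) = c^* z^{-\gamma_0}$ with $(c^*)^{m-1} = \mathsf Q(\gamma_0)$, as in \Cref{lem:vss}.

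\textbf{Step 1: existence of $\gamma^*$.} I first check that $\gamma^*$ in \eqref{eq:gamma^star} is well defined and positive. By \eqref{eq:mathsf Q}, $\gamma \mapsto \mathsf Q(\gamma)$ is continuous and decreasing on $(\max\{2,d\},\infty)$, and $\mathsf Q(+\infty)=0$. Since $\gamma_0 > \max\{2,d\}$ and $0<m<1$, the intermediate value theorem gives a $\gamma^*>0$ with $\mathsf Q(\gamma_0+\gamma^*) = m\,\mathsf Q(\gamma_0)$.

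\textbf{Step 2: power functions.} Exactly as in the proof of \Cref{lem:vss}, with the substitution $\rho = z/\sigma$, for any $\gamma > \max\{2,d\}$ we have
\begin{equation*}
\int_z^\infty K(z,\rho)\rho^{-\gamma}\,d\rho = z^{2-\gamma}\,\mathsf Q(\gamma).
\end{equation*}
The integral is finite by the kernel estimates \eqref{eq:Q away from 0} and \eqref{eq:Q close to 0}. Apply this with $\gamma = \gamma_0$ and with $\gamma = \gamma_0+\gamma^*$, and use $c^* z^{2-\gamma_0}\mathsf Q(\gamma_0) = (c^*)^m z^{-m\gamma_0} = U^*(z)^m$ together with Step 1. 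Linearity then gives, for $f(z) \coloneqq U^*(z)(1-Az^{-\gamma^*})$,
\begin{equation*}
\int_z^\infty K(z,\rho) f(\rho)\,d\rho = U^*(z)^m\Big(1 - A\tfrac{\mathsf Q(\gamma_0+\gamma^*)}{\mathsf Q(\gamma_0)} z^{-\gamma^*}\Big) = U^*(z)^m\big(1-mAz^{-\gamma^*}\big).
\end{equation*}
This is the rigorous form of the formal expansion above. Note that it is exact, not only asymptotic, because the operator is linear before taking the $m$-th root.

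\textbf{Step 3: positive part and comparison.} Since $K\ge 0$ and $f_+ \ge f$, we get $(\fixedpoint \underline{\mathcal U}(z))^m \ge U^*(z)^m(1-mAz^{-\gamma^*})$, and trivially $(\fixedpoint \underline{\mathcal U}(z))^m \ge 0$.

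At points where $x \coloneqq Az^{-\gamma^*} < 1$, concavity of $s\mapsto s^m$ gives $(1-x)^m \le 1-mx$. This tangent-line inequality holds for every $x<1$, including $x\le 0$, which covers the case $A\le 0$. Hence $\underline{\mathcal U}(z)^m = U^*(z)^m(1-x)^m \le (\fixedpoint\underline{\mathcal U}(z))^m$.

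At points where $x\ge 1$, we have $\underline{\mathcal U}(z) = 0 \le \fixedpoint \underline{\mathcal U}(z)$. So $\underline{\mathcal U} \le \fixedpoint \underline{\mathcal U}$ on $(0,\infty)$.

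\textbf{Main obstacle.} The only delicate point is the cutoff $(\cdot)_+$. For $A>0$ the function $f$ is negative near $z=0$, so $\fixedpoint$ applied to $\underline{\mathcal U}$ is not literally given by the formula in Step 2. This is resolved by the monotonicity $K\ge0$ used in Step 3, which yields an inequality in the right direction. It remains to confirm that $f$ is integrable against $K(z,\cdot)$. Near $\rho=\infty$ this holds because $\gamma_0>2$. Near $\rho=z$ the kernel vanishes. For $\rho$ small the term $|A|\rho^{-\gamma_0-\gamma^*}$ only enters through $f$ on $(z,\infty)$ with $z>0$ fixed, so no singularity at the origin arises.
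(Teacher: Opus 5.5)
Your proposal is correct and is essentially the paper's argument: the exact identity $\int_z^\infty K(z,\rho)\rho^{-\gamma}\,d\rho=z^{2-\gamma}\mathsf Q(\gamma)$ together with the defining relation \eqref{eq:gamma^star} gives $\int_z^\infty K(z,\rho)f(\rho)\,d\rho=U^*(z)^m(1-mAz^{-\gamma^*})$, and the tangent-line inequality $(1-x)^m\le 1-mx$ from concavity of $s\mapsto s^m$ finishes it. Your treatment of the cutoff via $K\ge 0$ and $f_+\ge f$, and your remark that the tangent inequality holds for all $x\le 1$ (so $A\le 0$ is covered), are slightly more careful than the paper, which writes the argument only for $A>0$; one small slip is that integrability at $\rho=\infty$ needs $\gamma_0>\max\{2,d\}$, i.e.\ $m>m_c$, and not merely $\gamma_0>2$, which is the condition you correctly required in Step 2.
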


\paragraph{A super-solution for the tails.}
To construct a supersolution, let us consider
\begin{equation}
\label{eq:m < 1 tail supersolution}
U(z) = U^*(z) \left( 1 - z^{-\gamma^*} + z^{-\frac{3}{2}\gamma^*} \right).
\end{equation}
Now we estimate
\begin{align*}
\frac{U(z)^m - (\fixedpoint U(z))^m}{U^*(z)^m}
& \ge \Lambda z^{-\frac{3}{2}\gamma^*}
+ R(z)  \quad \text{where } \Lambda =  m - \frac{\mathsf Q(\frac{2}{1-m}+\frac{3}{2}\gamma^*)}{\mathsf Q(\frac{2}{1-m})} \ge 0.
\end{align*}
where $R$ is the Taylor remainder.
\begin{lemma}
\label{lem:fast supersolution tails}
There exists $z_0(\alpha, m)$ such that
\begin{equation}
  \label{eq:fast supersolution tails}
  \overline{\mathcal U}(z) = U^*(z) (1 - z^{-\gamma^*} + z^{-\frac 3 2\gamma^*} ),
\end{equation}
is a super-solution for all $z \ge z_0(\alpha,m)$.
\end{lemma}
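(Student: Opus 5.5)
We must show that $\overline{\mathcal U}^m \ge (\fixedpoint \overline{\mathcal U})^m$ for $z \ge z_0$. Write $\gamma_0 = \frac{2}{1-m}$ and $\overline{\mathcal U}(z) = U^*(z)\,(1 - z^{-\gamma^*} + z^{-\frac32\gamma^*})$. For $z\ge z_0\ge 1$ the bracket is positive, so $\overline{\mathcal U}\ge 0$. The operator is linear before taking the $m$-th root, so we can compute it term by term. By the scaling computation in \Cref{lem:vss}, any power $c\rho^{-\gamma_0-\gamma}$ with $\gamma\ge 0$ integrated against $K(z,\rho)=\rho Q(z/\rho)$ over $(z,\infty)$ gives $c\,\mathsf Q(\gamma_0+\gamma)\,z^{2-\gamma_0-\gamma}$. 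We have $\mathsf Q(\gamma_0+\gamma)<\infty$ because $\gamma_0+\gamma>\max\{2,d\}$, and $z^{2-\gamma_0}=(U^*/c^*)^m$. Dividing by $U^*(z)^m = (c^*)^m z^{-m\gamma_0}$ and using $(c^*)^{m-1}=\mathsf Q(\gamma_0)$ gives the exact identity
\begin{equation*}
\frac{(\fixedpoint \overline{\mathcal U}(z))^m}{U^*(z)^m} = 1 - \frac{\mathsf Q(\gamma_0+\gamma^*)}{\mathsf Q(\gamma_0)} z^{-\gamma^*} + \frac{\mathsf Q(\gamma_0+\frac32\gamma^*)}{\mathsf Q(\gamma_0)} z^{-\frac32\gamma^*} = 1 - m z^{-\gamma^*} + \frac{\mathsf Q(\gamma_0+\frac32\gamma^*)}{\mathsf Q(\gamma_0)} z^{-\frac32\gamma^*},
\end{equation*}
where the second equality is the definition \eqref{eq:gamma^star} of $\gamma^*$. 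There is no remainder on this side. The only Taylor error comes from $\overline{\mathcal U}^m$.

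Next we expand the left-hand side. Set $x = -z^{-\gamma^*}+z^{-\frac32\gamma^*}$, so $|x|\le 2z^{-\gamma^*}$ and $x\to0$. Taylor's theorem with Lagrange remainder for $s\mapsto s^m$ gives $(1+x)^m = 1+mx + R$ with $|R|\le C_m x^2 \le 4C_m z^{-2\gamma^*}$ for $z\ge z_0$ large, since $(1+\theta x)^{m-2}$ is bounded when $|x|\le 1/2$. Subtracting, we obtain
\begin{equation*}
\frac{\overline{\mathcal U}(z)^m - (\fixedpoint\overline{\mathcal U}(z))^m}{U^*(z)^m} = \Lambda z^{-\frac32\gamma^*} + R(z), \qquad \Lambda = m - \frac{\mathsf Q(\gamma_0+\frac32\gamma^*)}{\mathsf Q(\gamma_0)}.
\end{equation*}

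The key step is to show that $\Lambda>0$ strictly. The paper states $\mathsf Q$ is decreasing; I would justify strict decrease directly from $\mathsf Q(\gamma)=\int_0^1 Q(\sigma)\sigma^{\gamma-3}d\sigma$, since $Q>0$ on $(0,1)$ and $\sigma^{\gamma-3}$ is strictly decreasing in $\gamma$ for $\sigma\in(0,1)$. Then $\mathsf Q(\gamma_0+\frac32\gamma^*)<\mathsf Q(\gamma_0+\gamma^*)=m\,\mathsf Q(\gamma_0)$, so $\Lambda>0$. This also needs $\gamma^*>0$ to be well defined. That follows because $\mathsf Q$ is continuous and strictly decreasing from $\mathsf Q(\gamma_0)$ to $\mathsf Q(+\infty)=0$, and $m\in(0,1)$. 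Since $2\gamma^*>\frac32\gamma^*$, we get $|R(z)|\le 4C_m z^{-2\gamma^*}\le \frac{\Lambda}{2}z^{-\frac32\gamma^*}$ once $z^{\gamma^*/2}\ge 8C_m/\Lambda$. Choosing $z_0(\alpha,m)$ larger than this threshold, and large enough that $|x|\le1/2$, gives $\overline{\mathcal U}^m\ge(\fixedpoint\overline{\mathcal U})^m$ on $[z_0,\infty)$, which proves \Cref{lem:fast supersolution tails}.

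The main obstacle is only the bookkeeping that makes the operator side exact. One must check that each power is integrable against $K$ near $\rho=z$ and at infinity, which holds by \eqref{eq:Q away from 0}, \eqref{eq:Q close to 0} and $\gamma_0>\max\{2,d\}$. One must also note that the supersolution property on $[z_0,\infty)$ only involves values of $\overline{\mathcal U}$ on $[z,\infty)\subset[z_0,\infty)$, because $K(z,\rho)=0$ for $\rho<z$. So the restriction to the tail is consistent.
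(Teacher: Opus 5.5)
Your proof is correct and follows the paper's own argument. You evaluate the linear operator side exactly via $\mathsf Q$ and the definition of $\gamma^*$, bound the second-order Taylor remainder of $\overline{\mathcal U}^m/U^{*m}$ by $Cz^{-2\gamma^*}$, and dominate it by $\Lambda z^{-\frac32\gamma^*}$ for large $z$. You also tighten points the paper glosses over: you prove $\Lambda>0$ strictly, which is needed because the remainder is negative, whereas the paper only states $\Lambda\ge 0$; and you note the bracket lies near $1$ (it is at most $1$, not in $[1,2]$ as the paper writes), so the remainder bound holds uniformly for every $z\ge z_0$, not just at $z_0$.
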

\begin{proof}
We point out that $\overline{\mathcal U}(z) / U^*(z) \in [1,2]$ for $z \ge z_0 \ge 1$ and therefore
the Taylor remainder term can be controlled by
\begin{align*}
  |R(z)| \le  \frac{1}{2}\left( z^{-\gamma^*} - z^{-\frac 3 2 \gamma^*}  \right)^2 \sup_{\xi \in [1,2]} |m(m-1) \xi^{m-2}| \le  \frac{|m (m-1)|}{2} z^{-2\gamma^*} .
\end{align*}
We choose $z_0$ such that $|R(z_0)| \le \Lambda z_0^{-\frac 3 2 \gamma^*}$.
\end{proof}

\subsection{Super-solutions close to $z = 0$}
\begin{lemma}
\label{lem:fast supersolution close 0}
Assume $d \ge 2$, $m > 1 - \frac{2}{d}$ and let
\begin{equation*}
  \gamma \in
  \begin{dcases}
    [\tfrac{d-2}{m}, d) & \text{if } d \ge 3, \\
    (0,2)               & \text{if } d= 2.
  \end{dcases}
\end{equation*}
Let us pick
\[
  c^{1-m} = \frac{(d-2)\Gamma(1-\alpha)}{1 + \int_z^1 (1-{\rho}^{-1/b})^{-\alpha} \rho^{-\gamma+d-1} d \rho }.
\]
Assume that there exists a solution $U$ such that
\begin{equation*}
  \int_1^\infty (1-\rho^{-\frac 1 b})^{-\alpha} U(\rho) \rho^{d-1} d \rho \le c.
\end{equation*}
Then, we have
\begin{equation*}
  \overline{ U}(z) =
  \begin{dcases}
    c z^{-\gamma} & \text{if } z < 1,
    \\
    U(z)          & \text{if } z \ge 1,
  \end{dcases}
\end{equation*}
is a super-solution and, furthermore, $U(z) \le \overline{ U}(z)$ for $z \in (0,+\infty)$.
\end{lemma}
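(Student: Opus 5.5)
The plan is to verify the super-solution inequality $\overline U \ge \fixedpoint \overline U$ separately on $z\ge 1$ and on $z<1$, and then to obtain $U\le\overline U$ from the homogeneity comparison of \Cref{lem:comparison by homogeneity}.

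\textbf{Range $z \ge 1$.} Here $\overline U = U$ on $[z,\infty)$ and $K(z,\cdot)$ only sees $\rho\ge z$. Hence $\fixedpoint\overline U(z) = \fixedpoint U(z) = U(z) = \overline U(z)$, and the inequality holds with equality.

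\textbf{Range $z<1$.} We must show
\[
\int_z^\infty K(z,\rho)\overline U(\rho)\,d\rho \le c^m z^{-\gamma m}.
\]
The pointwise estimate $K(z,\rho)=\rho Q(z/\rho)$ is poor near $\rho\approx z$. Instead I would work with the flux form, using $Q'$ from \eqref{eq:Q'} and $-z^{d-1}(U^m)'=\int_z^\infty G(z/\rho)\rho^{d-1}U\,d\rho$ with $G(\eta)=(1-\eta^{1/b})^{-\alpha}/\Gamma(1-\alpha)$. Write $\fixedpoint\overline U(z)^m = \int_z^\infty (-\partial_s)(\fixedpoint \overline U)^m(s)\,ds$ and bound
\[
s^{d-1}\bigl|\partial_s(\fixedpoint\overline U)^m\bigr| \le \frac{1}{\Gamma(1-\alpha)}\Bigl(\int_s^1 (1-(s/\rho)^{1/b})^{-\alpha}\, c\rho^{d-1-\gamma}\,d\rho + \int_1^\infty (1-\rho^{-1/b})^{-\alpha}U\rho^{d-1}\,d\rho\Bigr).
\]
In the second integral I use $G(s/\rho)\le G(1/\rho)$, since $G$ is increasing and $s<1$. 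That term is then at most $c/\Gamma(1-\alpha)$ by hypothesis. The first term is $c$ times an integral of the type appearing in the definition of $c$, after the rescaling $\rho\mapsto s\rho$. Since $\gamma<d$, this term is of order $s^{d-\gamma}\le 1$, which accounts for the ``$1+\int\cdots$'' denominator. Integrating in $s$ from $z$ to $\infty$ against $s^{1-d}$ (or equivalently using $H_d$) gives
\[
(\fixedpoint\overline U)^m(z)\le \frac{c\,(1+\int\cdots)}{(d-2)\Gamma(1-\alpha)}\,z^{2-d} = c^m z^{2-d},
\]
by the choice of $c^{1-m}$. For $d=2$ the factor $z^{2-d}/(d-2)$ is replaced by the logarithm, with the same bookkeeping. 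Finally $z^{2-d}\le z^{-\gamma m}$ for $z<1$ because $\gamma m\ge d-2$, and for $d=2$ we have $|\log z|\lesssim z^{-\gamma m}$. This closes the inequality.

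\textbf{Comparison.} With $\overline U\ge\fixedpoint\overline U$, the iterates $\fixedpoint^n\overline U$ are nonincreasing. To get $U\le\overline U$ I would run the argument of \Cref{lem:comparison by homogeneity} with a supersolution in place of a solution: if $\lambda U\le\overline U$ then
\[
\overline U\ge\fixedpoint\overline U\ge\lambda^{1/m}\fixedpoint U=\lambda^{1/m}U,
\]
so the supremum $\lambda_*$ satisfies $\lambda_*\ge\lambda_*^{1/m}$. For $m<1$ this yields $\lambda_*\ge 1$ once $\lambda_*>0$. It therefore suffices that $U/\overline U$ be bounded on $(0,1)$. By \eqref{eq:profile at z=0}, $U^m\sim z^{2-d}$ (respectively $|\log z|$ for $d=2$), while $\overline U^m=c^m z^{-\gamma m}$ with $\gamma m\ge d-2$, so this ratio is bounded.

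\textbf{Main obstacle.} The delicate step is the region $\rho\in(z,1)$ near $\rho=z$, where the singular weight $(1-(z/\rho)^{1/b})^{-\alpha}$ has to be integrated exactly against $\rho^{d-1-\gamma}$ and matched with the constant $c$. I expect some slack or a slightly adjusted constant may be needed there; the borderline case $\gamma m=d-2$ is exactly why the constant $(d-2)\Gamma(1-\alpha)$ appears.
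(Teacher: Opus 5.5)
\textbf{The comparison step fails for $m<1$.} This lemma is used in the fast-diffusion range, where $1/m>1$. From $\lambda U\le\overline U$ you get $\lambda^{1/m}U\le\overline U$, hence $\lambda_*^{1/m}\le\lambda_*$. For $m<1$ this inequality holds for every $\lambda_*\in(0,1]$, so it carries no information and certainly does not give $\lambda_*\ge 1$. The implication you borrow from \Cref{lem:comparison by homogeneity} needs $1/m<1$, i.e.\ $m>1$.

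In fact the principle you are invoking is false when $m<1$. By \Cref{lem:vss}, $U^*$ is a solution. The function $\overline V=\tfrac12 U^*$ satisfies $\fixedpoint\overline V=2^{-1/m}U^*\le\overline V$, so it is a supersolution lying above $\tfrac12 U^*$. Yet $U^*\not\le\overline V$.

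The boundedness of $U/\overline U$ near $0$, which you take from \eqref{eq:profile at z=0}, is also circular. That asymptotic, via \Cref{lem:limit of flux holds}, is only proved for finite-mass solutions. This lemma is precisely what the paper uses to establish finite mass of $U_R$ for $d\ge 2$.

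\textbf{What is missing.} You need to exploit two facts you never use: $\overline U=U$ on $[1,\infty)$, and $\fixedpoint$ is of Volterra type, since $\fixedpoint V(z)$ depends only on $V|_{[z,\infty)}$. The paper argues as follows.
\begin{itemize}
\item It runs the decreasing iteration $\fixedpoint^n\overline U$, bounded below by the subsolution $0$. The limit is a solution $\widetilde U\le\overline U$.
\item Every iterate equals $U$ on $[1,\infty)$, so $\widetilde U=U$ there.
\item It then propagates $U=\widetilde U$ inward. Suppose $z_0>0$ is the infimum of the coincidence region. For $z<z_0$ close to $z_0$,
\[
|U(z)-\widetilde U(z)|\le C\sup_{[z,z_0]}|U-\widetilde U|\int_z^{z_0}\rho\,Q(z/\rho)\,d\rho .
\]
Here $C$ comes from the local Lipschitz continuity of $s\mapsto s^{1/m}$, which is where $m<1$ helps, and the integral tends to $0$. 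This gives a contradiction, so $U=\widetilde U\le\overline U$.
\end{itemize}
This argument needs no information about $U$ near $0$.

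Your supersolution step is essentially the paper's flux comparison, with one caveat. The bound $G(s/\rho)\le G(1/\rho)$ holds only for $s<1$, so integrating in $s$ over $(z,\infty)$ is not justified. What you actually obtain is $(\fixedpoint\overline U)^m(z)\le U(1)^m+c^m(z^{2-d}-1)$. You therefore also need $U(1)\le c$, a point the paper's proof passes over as well. Finally, for $d=2$ the prescribed $c$ is zero, so the constant must be normalised with $\gamma m$ in place of $d-2$.
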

\begin{proof}
For $z \ge 1$ we have that $U$ is exactly a solution. For $z \in (0,1)$
\begin{align*}
  \int_z^\infty & (-Q'(z/\rho) )\overline{ U} (\rho) d \rho
  \\
  & \le \frac{z^{1-d}}{\Gamma(1-\alpha)}\left( c\int_z^1 (1-\left(\tfrac{z}{\rho}\right)^{1/b})^{-\alpha} \rho^{-\gamma+d-1} d \rho +\int_1^\infty (1-\left(\tfrac{z}{\rho}\right)^{1/b})^{-\alpha} U(\rho) \rho^{d-1}d\rho \right)
  \\
  & \le \frac{z^{1-d}}{\Gamma(1-\alpha)} c  \left(\int_z^1 (1-\left(\tfrac{1}{\rho}\right)^{1/b})^{-\alpha} \rho^{-\gamma+d-1} d \rho + 1\right) =
  (d-2) c^m z^{1-d}
  \\
  & = -(\overline{ U}^m)'(z).
\end{align*}
Since $\underline{ U} = 0$ is a sub-solution, we have that $\widetilde U = \fixedpoint^n \overline{ U}$ is a solution to the problem. Given that $\overline{ U} (z) = U(z)$ for $z > 1$ we have that $\fixedpoint^n \overline{ U}(z) = U(z)$ for $z > 1$, and therefore
$\widetilde U(z) = U(z)$ for $z > 1$.
Consider
\[
  z_0 = \inf\{ z > 0 : U = \widetilde U \text{ in } (z,+\infty) \}.
\]
Assume that $z_0 > 0$. For $z < z_0$ we write using the intermediate value theorem
\begin{align*}
  |U(z) - \widetilde U(z)| & = \frac {\xi^{\frac 1 m - 1}}{m} \left| \int_z^{z_0} \rho Q(z/\rho) (U(\sigma) - \widetilde U(\sigma)) d \sigma \right|                                                          \\
  & \le \left(\sup_{\sigma \in [z,z_0]} |U(\sigma) - \widetilde U(\sigma)|\right) \frac{ U(z)^{\frac 1 m - 1} + \widetilde U(z)^{\frac 1 m - 1}} m \int_{z}^{z_0} \rho Q(z/\rho) d \rho.
\end{align*}
Now taking supremum again to some $z_1 < z_0$ we recover
\begin{align*}
  \sup_{z \in [z_1,z_0]} & |U(z) - \widetilde U(z)|
  \\
  & \le \sup_{z \in [z_1,z_0]} |U(z) - \widetilde U(z)|  \sup_{z \in [z_1,z_0]} \frac{U(z)^{\frac 1 m - 1} + \widetilde U(z)^{\frac 1 m - 1}}{m} \int_{z}^{z_0} \rho Q(z/\rho) d \rho.
\end{align*}
Notice that
\begin{equation*}
  \lim_{z_1 \to z_0^-} \sup_{z \in [z_1,z_0]} \frac{U(z)^{\frac 1 m - 1} + \widetilde U(z)^{\frac 1 m - 1}}{m} \int_{z}^{z_0} \rho Q(z/\rho) d \rho = 0.
\end{equation*}
For $z_1 < z_0$ but close enough to $z_0$ this is smaller than $0$, and hence we deduce that
\[
  \sup_{z \in [z_1,z_0]} |U(z) - \widetilde U(z)| = 0.
\]
But this is a contradiction to the definition of $z_0$.
\end{proof}

\subsection{Proof of \Cref{thm:existence} for $m \in (m_c,1)$ and \Cref{thm:fast-diffusion}}
\label{sec:existence proof m < 1}
\paragraph{Existence of a family of profiles.}
Using the subsolution $\underline{\mathcal U}(z) = c^*z^{-\frac{2}{1-m}}(1 - z^{-\gamma^*})_+$ and $\overline{\mathcal U} = U^* $ we deduce that
\begin{equation}
\label{eq:m<1 canonical solution}
\mathcal U = \lim_n \fixedpoint^n \underline{\mathcal U},
\end{equation}
is in $C((0,+\infty))$ and it is a solution to the problem.
Furthermore, using \eqref{eq:fast supersolution tails} we deduce that $\mathcal U \ne U^*$ and we have the estimate
\begin{equation*}\label{Lbeh.VSSfan}
U^*(z) ( 1 - z^{-\gamma^*} )_+
\le \mathcal U(z) \le
U^*(z) (1 - z^{-\gamma^*} + z^{-\frac 3 2 \gamma^*}), \qquad \text{for all } z \ge z_0 > 0.
\end{equation*}
Let us define
\[
U_R(z) \coloneqq R^{\frac{-2}{1-m}} \mathcal U\left(\frac{z}{R}\right).
\]
\begin{lemma}
\label{lem:fast rescaled limits in R}
For $m \in (0,1)$ and $z > 0$ we have that $U_R(z)$ is a decreasing sequence in $R$ and, furthermore, for each $z>0$,
\[
  U_R(z) \to
  \begin{dcases}
    U^*(z) & \text{as } R \to 0,      \\
    0      & \text{as } R \to \infty.
  \end{dcases}
\]
\end{lemma}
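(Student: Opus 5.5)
The plan is to use the scaling invariance of the profile equation \eqref{eq:profile equation} together with the monotone construction \eqref{eq:m<1 canonical solution}, rather than a comparison principle. For $m<1$ no comparison principle is available at this point: \Cref{thm:comparison principle} relies on bounded supports.

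\textbf{Step 1: $U_R$ is a solution.} By \eqref{eq:scaling solutions} with $A = R^{-\frac{2}{1-m}}$ we get $A^{-\frac{m-1}{2}} = R^{-1}$, so $U_R$ solves \eqref{eq:profile equation} for every $R>0$. More precisely, I will check directly from $K(z,\rho)=\rho Q(z/\rho)$ (the substitution $\rho = R\rho'$) that the scaling $S_R V(z) \coloneqq R^{-\frac 2{1-m}} V(z/R)$ commutes with the operator:
\[
\fixedpoint (S_R V) = S_R(\fixedpoint V)
\]
for every non-negative $V$. Moreover $U^*$ is a fixed point of $S_R$, since $S_R U^* = U^*$ by the power $-\frac{2}{1-m}$.

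\textbf{Step 2: monotonicity in $R$.} Let $\underline{\mathcal U} = U^*(1-z^{-\gamma^*})_+$ be the subsolution used in \eqref{eq:m<1 canonical solution}. Then
\[
S_R \underline{\mathcal U}(z) = U^*(z)\,(1 - R^{\gamma^*} z^{-\gamma^*})_+ ,
\]
which is pointwise non-increasing in $R$. Since $K \ge 0$, the operator $\fixedpoint$ is order preserving. By Step 1, $S_R \fixedpoint^n \underline{\mathcal U} = \fixedpoint^n S_R\underline{\mathcal U}$. Hence for $R_1<R_2$ we get $\fixedpoint^n S_{R_1}\underline{\mathcal U} \ge \fixedpoint^n S_{R_2}\underline{\mathcal U}$ for all $n$. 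Passing to the limit gives $U_{R_1}\ge U_{R_2}$.

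\textbf{Step 3: the limit $R\to 0$.} The iterates stay below the supersolution $U^*$ (as in the existence argument), so $U_R \le U^*$. On the other hand, $U_R \ge S_R\underline{\mathcal U} = U^*(1-R^{\gamma^*}z^{-\gamma^*})_+ \to U^*$ pointwise as $R\to 0$. Squeezing gives $U_R \to U^*$.

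\textbf{Step 4: the limit $R\to\infty$.} This is the only step needing a quantitative input. First I need $\mathcal U$ to have finite mass $M_1 = |\partial B_1|\int_0^\infty \mathcal U \rho^{d-1}d\rho$.
\begin{itemize}
\item At infinity, integrability follows from $\mathcal U\le U^*$ and $\frac{2}{1-m}>d$.
\item Near $z=0$ it is the delicate point. I will use the supersolution of \Cref{lem:fast supersolution close 0} when $d\ge 2$, which gives $\mathcal U\le c z^{-\gamma}$ with $\gamma<d$. For $d=1$, the bound $\mathcal U\le U^*$ together with the boundedness of $Q$ near $0$ from \eqref{eq:Q close to 0} should give local boundedness.
\end{itemize}
The mass of $U_R$ is then $R^{d-\frac{2}{1-m}} M_1$, which tends to $0$ as $R\to\infty$ precisely because $m>m_c$. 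Since $U_R$ is non-increasing in $z$, Lieb's bound \eqref{eq:Lieb upper bound} gives
\[
U_R(z)\le \frac{d\,R^{d-\frac{2}{1-m}}M_1}{|\partial B_1|}\, z^{-d} \to 0 .
\]
This proves the second limit. I expect the main obstacle to be the finiteness of the mass near $z=0$, that is, verifying the hypothesis of \Cref{lem:fast supersolution close 0}. If that proves awkward, a fallback is to note that the monotone limit $V=\lim_{R\to\infty}U_R$ is a solution that is invariant under $S_\lambda$ for every $\lambda>0$. Such a $V$ must be a multiple of $z^{-\frac2{1-m}}$, hence $V=0$ or $V=U^*$, and $V\le U_1$ with $U_1\neq U^*$ excludes $U^*$.
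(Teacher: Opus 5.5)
Your Steps 1--3 are correct. For the monotonicity in $R$ you take a different route from the paper. The paper first gets $U_{\overline R}<U_{\underline R}$ for large $z$ from the two-sided tail expansion of $\mathcal U$, which combines the subsolution with the refined supersolution \eqref{eq:fast supersolution tails}. It then pushes this ordering down to all $z>0$ by a first-contact argument, using $K(z,\rho)>0$ for $\rho>z$. That gives strict monotonicity and works for any pair of solutions with ordered tails.

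You instead use the commutation $S_R\fixedpoint=\fixedpoint S_R$. You combine it with the fact that, by \eqref{eq:m<1 canonical solution}, $\mathcal U$ is the monotone limit of $\fixedpoint^n\underline{\mathcal U}$, and $S_R\underline{\mathcal U}$ is ordered in $R$. Your argument needs neither the tail expansion nor a contact argument. It gives only non-strict monotonicity, which is enough for the limits, and it is tied to that particular construction of $\mathcal U$. Your squeeze for $R\to0$ uses the same information as the paper (the lower bound $\underline{\mathcal U}$ and the upper bound $U^*$), stated more cleanly.

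Step 4, as you primarily wrote it, has a genuine gap. Finiteness of the mass of $\mathcal U$ is not available at this point; in the paper it is derived \emph{from} this lemma.
\begin{itemize}
\item For $d\ge2$, \Cref{lem:fast supersolution close 0} needs the smallness hypothesis $\int_1^\infty(1-\rho^{-1/b})^{-\alpha}U(\rho)\rho^{d-1}\,d\rho\le c$. Nothing guarantees this for $\mathcal U$. The paper checks it only for $U_R$ with $R$ large, i.e.\ using $U_R\to0$.
\item For $d=1$ your suggestion yields nothing. $U^*$ is itself a fixed point, so inserting $\mathcal U\le U^*$ and $Q\le Q(0^+)$ into the equation just returns $\mathcal U\lesssim z^{-\frac2{1-m}}$, which is not integrable at $0$. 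The paper needs $U_R(1)$ and the tail term $B_R$ to be small so that the superlinear inequality $W'\le AW^{1/m}+B_R$ does not blow up, and that smallness again comes from $U_R\to0$.
\end{itemize}
So the Lieb-bound route is circular.

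Your fallback, on the other hand, is correct and complete, and it is essentially the paper's proof. The paper writes $U_\infty=U^*\inf_{y>0}\mathcal U(y)/U^*(y)$, which is your scale invariance, and concludes from $c_\infty^m=c_\infty$ and $c_\infty<1$. Make the fallback your main argument, and record the two facts it uses:
\begin{itemize}
\item $V$ is a fixed point, by dominated convergence with majorant $U^*$, since $\int_z^\infty K(z,\rho)U^*(\rho)\,d\rho=U^*(z)^m<\infty$.
\item $\mathcal U\ne U^*$, by comparing the iterates with the tail supersolution \eqref{eq:fast supersolution tails} on $[z_0,\infty)$. This is legitimate because $\fixedpoint W(z)$ depends only on $W$ on $[z,\infty)$.
\end{itemize}
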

\begin{proof}
Notice that
\[
  U_R(z) = c^*z^{-\frac 2{1-m}}\Big(1 - R^{\gamma^*} z^{-\gamma^*} + o\left(R^{\gamma^*} z^{-\gamma^*}\right)\Big).
\]
Hence, for $z$ fixed, we have
\[
  U_R(z) \to U^*(z) \text{ as } R \to 0.
\]
Furthermore, if $\overline R > \underline R$ then we have that
$U_{\overline R}(z) < U_{\underline R}(z)$ for $z$ sufficiently large. Consider
\begin{equation*}
  z_1 = \inf\{Z > 0 : \text{for all } z \in (Z,+\infty) \text{ we have that } U_{\overline R}(z) < U_{\underline R}(z)\}.
\end{equation*}
Assume that $z_1 > 0$. Then $U_{\overline R}(z_1) = U_{\underline R}(z_1)$. However, we have
\[
  U_{\overline R}(z_1)^m  = \int_{z_1}^\infty K(z,\rho) U_{\overline R}(\rho) d\rho < \int_{z_1}^\infty K(z,\rho) U_{\underline R}(\rho) d\rho = U_{\underline R} (z_1)^m ,
\]
which is a contradiction. There $U_R(z)$ is decreasing in $R$ for all $z>0$. Now consider
\[
  U_\infty(z) = \lim_{R \to \infty} U_R (z).
\]
Due to the monotone convergence theorem $U_\infty(z) = T U_\infty(z)$ for each $z > 0$. Furthermore, we have
\begin{align*}
  U_\infty(z) & = \inf_{R > 0} U_R(z) = \inf_{R>0} R^{-\frac{2}{1-m}} \mathcal U(z/R) =  \inf_{R>0} U^*(z) \frac{\mathcal U(z/R)}{U^*(z/R)} = U^*(z) \inf_{R>0} \frac{\mathcal U(z/R)}{U^*(z/R)} \\
  & = U^*(z) \inf_{y > 0} \frac{\mathcal U(y)}{U^*(y)} = c_\infty U^*(z).
\end{align*}
Since $\mathcal U < U^*$ we have $c_\infty < 1$. Because $U_\infty$ satisfies $U_\infty = \fixedpoint U_\infty$, we have $c_\infty^m U^*(z)^m = c_\infty U^*(z)^m$, which requires $c_\infty^m = c_\infty$. Since $m \in (0,1)$ and $c_\infty < 1$, we deduce that $c_\infty = 0$.\normalcolor
\end{proof}

\paragraph{For $d=1$ solutions are bounded.}
Notice that since $U_R$ is decreasing we can use \eqref{eq:Q'} with $d=1$ and observe that $-Q'$ is increasing. Hence for $z \in (0,1)$ we have that
\begin{align*}
-(U_R^m)'(z) & = \int_z^\infty (-Q')\left(\tfrac{z}{\rho}\right) U_R(\rho) d \rho                                                              \\
& \le U_R(z) \int_z^1 (-Q')\left(\tfrac{z}{\rho}\right) d \rho + \int_1^\infty (-Q')\left(\tfrac{z}{\rho}\right) U_R(\rho) d \rho \\
& \le U_R(z) \int_z^1 (-Q')\left(\rho^{-1}\right) d \rho + \int_1^\infty (-Q')\left(\rho^{-1}\right) U_R(\rho) d \rho             \\
& \le U_R(z) A + B_R.
\end{align*}
Due to \Cref{lem:fast rescaled limits in R} and the dominated convergence theorem $B_R \to 0$ as $R \to \infty$. We also have $U_R(1) \to 0$. Let us define $W(s) = [U_R(1-s)]^m$. Then we have
\begin{equation*}
W'(s) \le A W(s)^{\frac 1 m} + B_R, \qquad W(0) = U_R(1)^m.
\end{equation*}
Integrating this differential inequality, we see that if $B_R$ and $U_R(1)$ are small enough, then $U(0)^m = W(1) < \infty$.
Since $U_R$ is bounded for $R$ large enough, then $\mathcal U$ is bounded. Due to tail estimates, $\mathcal U$ has finite mass.

\paragraph{For $d \ge 2$ solutions have finite mass.}
We can apply \Cref{lem:fast supersolution close 0} to $U_{R(c)}$ with $R(c)$ large enough. Since $U_{R(c)} \le c z^{-(d-2)/m}$ we conclude that $U_R$ has finite mass for all $R > 0$. And hence so that $\mathcal U$.

\paragraph{Conclusion for $\alpha$ fixed.}
Since we have proven that solutions have finite mass, we can apply \eqref{eq:limit of flux at z=0} to obtain their sharp behavior at $z = 0$. Through \eqref{eq:equation for dUm/dz} we deduce that $U^m \in C^1(0,+\infty)$ and hence we can apply \Cref{lem:sss is weak solution} to deduce that they are weak solutions.

\paragraph{Limit as $\alpha \to 1^-$.} We point out that $c^*_{\alpha,m}$ is continuous at $\alpha = 1$, ahd hence $U^*_{\alpha,m}$ is continuous at $m = 1$.
Due to \Cref{eq:Q limit alpha -> 1} we observe that if $\gamma > 2$
\begin{equation*}
\lim_{\alpha \to 1^-}\mathsf Q_{\alpha,m} (\gamma) = \frac{b}{\gamma - 2}.
\end{equation*}
Hence, we can pass the limit $\gamma^*_{\alpha,m}$ to the limit as $\alpha \to 1$. And thus we can pass to the limit $\underline{\mathcal U}_{\alpha,m}$ and $\overline{\mathcal U}_{\alpha,m}$. Following the choice in \Cref{lem:fast supersolution tails} we see that $z_0(\alpha,m)$ is bounded as $\alpha \to 1^-$.
Using \eqref{eq:Lieb upper bound} and \eqref{eq:equicontinuity} we have that $\mathcal U_{\alpha,m}$ can be precompact in $C_{loc}((0,+\infty))$. Any possible limit $\widehat {\mathcal U}$ satisfies $z \ge z_0(1,m)$ that
\begin{equation}
\label{eq:m < 1 alpha = 1 upper and lower bounds}
U^*_{1, m} (z) (1 - z^{\gamma^*_{1,m}})_+ \le \widehat {\mathcal U} (z) \le U^*_{1, m} (z) (1 - z^{\gamma^*_{1,m}} + z^{-\frac 3 2 \gamma^*_{1,m}})
\end{equation}
for $z$ large enough.
Furthermore, using super-solutions and the dominated convergence theorem, it is a solution to \eqref{eq:profile equation} for $\alpha = 1$. Since $\widehat{\mathcal U}$ is a solution of \eqref{eq:profile equation} with $K_{1,m}(z,\rho) = b \rho$ such that \eqref{eq:m < 1 alpha = 1 upper and lower bounds} we can take one derivative and prove that
\begin{equation*}
-(\widehat{\mathcal U}^m)'(z) = b z \widehat{\mathcal U}(z).
\end{equation*}
Integrating this equation, we deduce that
\begin{equation*}
\widehat{\mathcal U}(z) = c^*_{1,m}(C + z^2)^{-\frac{1}{1-m}}.
\end{equation*}
Due to the asymptotic expansion, we conclude that $C = 1$. Since every convergent sub-sequence has the same limit, the whole sequence $\mathcal U_{\alpha,m}$ converges, and this is its limit.
\qed

\section{Limit $m \to 1$. Proof of \Cref{thm:limit m->1}}
Like in the case $\alpha = 1$, we expect continuity with respect to $m$ and $U_{\alpha,m,M}$ with respect to $m$. Already in this case, the limits are tricky to compute. We have some preliminary results.
Due to \eqref{eq:Lieb upper bound}  and \eqref{eq:equicontinuity} we have that $\{ U_{\alpha,m,M}^m : m \ne 1 \}$ is pre-compact in $C_{loc}((0,+\infty))$.
Consider $\widehat U$ the limit of any convergent sub-sequence with $m_j \to 1$.
Also, by Fatou's lemma we have that
\begin{equation*}
\widehat M = |\partial B_1| \int_0^\infty \widehat U(z) z^{d-1} dz \le M .
\end{equation*}

Assume, in contradiction, that the tightness in $L^1(0,\infty, z^{d-1})$ does not hold. Then, there are some $c > 0$, and sequences $R_j \to \infty$ and a subsequence $m_j \to 1$ (which we do not relabel) such that $V_j = U_{\alpha,m_j,M}$
\begin{equation*}
\int_{R_j}^\infty V_j(\rho) \rho^{d-1} d \rho \ge c.
\end{equation*}
Then we have that
\begin{equation*}
z^{d-1}(-V_j^{m_j}(z))' \ge \int_z^\infty - \frac{z^{d-1}}{\rho^{d-1}}Q'(\tfrac{z}{\rho}) \rho^{d-1} \widehat{U}(\rho) d \rho \ge \frac{c}{\Gamma(1-\alpha)} \text{for } z < R_j.
\end{equation*}
Integrating once, we deduce that
\[
V_j^{m_j}(z) \ge \frac{c}{\Gamma(1-\alpha)} \int_{z}^{R_j} \rho^{1-d} d\rho \qquad \text{for all } z < R_j.
\]
\paragraph{Case $d=1$.} Then we have $V_j^{m_j}(z) \ge \frac{c}{\Gamma(1-\alpha)} (R_j - z) \to \infty$ for each $z$ fixed as $j \to \infty$. This contradicts the point-wise convergence.

\paragraph{Case $d=2$.} Then we have $V_j^{m_j}(z) \ge \frac{c}{\Gamma(1-\alpha)} \log \frac{R_j}{z} \to \infty$ for each $z$ fixed as $j \to \infty$. This contradicts the point-wise convergence.

\paragraph{Case $d \ge 3$.} Then we have that $V_j^{m_j}(z) \ge \frac{c}{\Gamma(1-\alpha)(d-2)}(z^{2-d} - R_j^{2-d})$.
This implies that the point-wise limit satisfies
\[
\widehat U (z) \ge \frac{c}{\Gamma(1-\alpha)(d-2)} z^{2-d}.
\]
Using Fatou's lemma, we deduce that
\[
\widehat M \ge |\partial B_1| \int_0^\infty \rho^{d-1} V(\rho) d\rho \ge \frac{c |\partial B_1|}{\Gamma(1-\alpha)(d-2)}\int_0^\infty \rho d \rho = +\infty.
\]
This is a contradiction. Hence, the family $z^{d-1} U_{\alpha,m_j,1}(z)$ is tight in $L^1(0,\infty)$.

Due to the tightness and the convergence over compactness, we deduce that as $j \to \infty$ we have that
\[
\int_0^\infty |U_{\alpha,m_j,M}(z) - \widehat U(z)| z^{d-1} dz \to 0.
\]
Due to the bounds on $Q_{\alpha,m}$ and the convergence $Q_{\alpha,m} \to Q_{\alpha, 1}$ as $m \to 1$ this is sufficient to prove that
\begin{equation*}
\widehat U(z) = \int_z^\infty K_{\alpha,1}(\rho) \widehat U(\rho) d \rho , \qquad |\partial B_1| \int_0^\infty \widehat U(\rho) \rho^{d-1} d \rho = M.
\end{equation*}
By the uniqueness result \Cref{thm:m=1 uniqueness} this implies that $\widehat U = U_{\alpha,1,M}.$
For every $m_j \to 1$ the sequence $U_{\alpha,m_j,M}$ is pre-compact in $C_{loc}((0,+\infty))$, and the limit of any convergent subsequence is $U_{\alpha,1,M}$, then the whole sequence converges.
\qed

\section{Extensions and open problems}

\paragraph{Uniqueness of weak solutions with Dirac initial data.}
We expect that the arguments in \cite{Pierre1982} for the uniqueness of the Porous-Medium Equation with measure initial data can be adapted to this setting. The idea of the proof is to study $v = E * u$ (where $E$ is either the Green function for $-\Delta$ if $d \ge 3$ or $-\Delta + \varepsilon I$ if $d =1,2$).

\paragraph{Asymptotic behavior for general initial data.}
We have constructed a family of self-similar profiles $U_{\alpha,m}$. 
An open problem remains regarding the stability of these profiles: are they long-time global attractors for the corresponding general Cauchy problem? In the classical case $\alpha=1$, this problem is very well understood \cite{vazquez2014barenblatt}. 
For $\alpha \in (0,1)$ and $m = 1$ this was shown in \cite{cortazarHeatEquationMemory2021}.
However, in the time-fractional model, there are no corresponding results in the literature. 

\paragraph{The very-singular fast-diffusion problem.}
The range $0<m<m_c$, $d\ge 3$, is usually called very singular fast diffusion. The tools used in this paper are not suitable for treating this range for the equation \eqref{eq:main} with the fractional time derivative. To our knowledge, the problem is completely open.

In the standard case $\alpha=1$ much is known. According to the theory developed, fundamental solutions with finite mass exist but do not have the type of self-similarity used in this paper, there is no mass conservation, and they extinguish in finite time: see \cite{chasseigne2002theory,Vazquez2006,Vazquez2006a, vazquez2017, bonforte2010sharp}. \normalcolor

\paragraph{Fully non-local problems.}
The Porous Medium in non-local pressure \cite{allen2016}.
Another interesting family of problems is $\partial_t^\alpha u = (-\Delta)^s u^m$. Let the linear case $m = 1$ already be well understood; see \cite{cortazarLargeTimeBehaviorFully2021}.

\paragraph{Numerical analysis for $m_c < m < 1$.}
In this paper, we include a discussion of the scheme for $m > 1$. This case is manageable, as we have compactly supported profiles. From a theoretical point of view, the scheme is also valid for $m_c < m < 1$. In this case, the solutions have power-type tails, and hence smart truncations are needed in order to obtain an implementable scheme.

\paragraph{The asymptotic fan for $0 < m < m_c$ above $U^*$.}
For $\alpha = 1$ it is know that there exists self-similar profiles of the form $U^*(z) ( 1 + z^{-\gamma} + o(z^{-\gamma}))$ that blow up at some $z = z_1$. By re-scaling this creates an ordered family of solutions above $U^*(z)$, each of which blow up at a different $z > 0$.

\paragraph{Uniqueness solutions to \eqref{eq:profile equation} when $m < 1$.}
In this manuscript we have only proven the uniqueness of solutions to \eqref{eq:profile equation} in the case $m > 1$, by taking advantage of the super-linear homogeneity and the compact support. The case $m_c < m < 1$ remains open.

\paragraph{Travelling waves.}  
This is a topic where much is known in the standard case and probably nothing in the fractional time derivative case.\normalcolor

\paragraph{Sign-changing self-similar profiles.} In principle, we cannot rule out the possibility that there exist sign-changing solutions to \eqref{eq:profile equation}. We do not find them particularly physically relevant and we will not discuss them further.

\section*{Acknowledgments}

The research of DGC is supported by grants RYC2022-037317-I and  PID2021-127105NB-I00 from Agencia Estatal de Investigación of the Spanish Government. \L{}P has been supported by the National Science Centre, Poland (NCN) under the grant Sonata Bis with a number NCN 2020/38/E/ST1/00153.
JLV was supported by CNS2024-154515 from AEI, the Agencia Estatal de Investigación of the Spanish Government.

\printbibliography

\appendix

\section*{Addresses}

\noindent David Gómez-Castro, \\
Departamento de Matem\'{a}ticas, \\
Universidad Aut\'{o}noma de Madrid,\\ Campus de Cantoblanco, 28049 Madrid, Spain.  \\
e-mail address:~\texttt{david.gomezcastro@uam.es}\\
webpage:  \url{https://gomezcastro.xyz/}

\bigskip 

\noindent \L{}ukasz P\l{}ociniczak, \\ 
Department of Applied Mathematics, \\ 
Faculty of Pure and Applied Mathematics,\\ 
Wrocław University of Science and Technology\\
Hugo Steinhaus Center, 
ul. Wybrzeże Wyspiańskiego 27, 
50-370 Wrocław, 
Poland.  \\
e-mail address:~\texttt{lukasz.plociniczak@pwr.edu.pl}\\
webpage:  \url{https://alfa.im.pwr.edu.pl/~plociniczak/}

\bigskip

\noindent Juan Luis V\'azquez, \\ 
Departamento de Matem\'{a}ticas, \\
Universidad Aut\'{o}noma de Madrid,\\ Campus de Cantoblanco, 28049 Madrid, Spain.  \\
e-mail address:~\texttt{juanluis.vazquez@uam.es}\\
webpage:  \url{https://verso.mat.uam.es/\~juanluis.vazquez/}

\end{document}